\documentclass[10pt,reqno,oneside]{amsproc} \usepackage{amsfonts} \usepackage{amsmath, amsthm, amssymb, enumerate} \DeclareSymbolFont{bbold}{U}{bbold}{m}{n} \DeclareSymbolFontAlphabet{\mathbbold}{bbold} \def\indic{\mathbbold{1}} \textwidth 16truecm \textheight 8.4in\oddsidemargin0.2truecm\evensidemargin0.7truecm\voffset-.9truecm \usepackage{color} \usepackage{marginnote}    \chardef\forshowkeys=0    \chardef\refcheck=0    \chardef\showllabel=0    \chardef\sketches=0 \usepackage[unicode,breaklinks=true,colorlinks=true,linkcolor=blue,urlcolor=blue,citecolor=blue]{hyperref} \ifnum\forshowkeys=1      \usepackage[notref,notcite,color]{showkeys} \fi \ifnum\showllabel=1 \definecolor{colorcccc}{rgb}{0.7,0.7,0.7}   \def\llabel#1{\marginnote{\color{colorcccc}\rm\small(#1)}[-0.0cm]\notag} \def\llabel{\label} \else  \def\llabel#1{\notag} \fi  \ifnum\refcheck=1   \usepackage{refcheck} \fi \setcounter{MaxMatrixCols}{10}  \newtheorem{Theorem}{Theorem}[section]   \newtheorem{Lemma}[Theorem]{Lemma}   \newtheorem{definition}{Definition}[section]    \def\minn#1{\bar{#1}}        \def\PP{{\mathbb P}}        \def\dd{d}        \def\un{u^{(n)}}    \def\um{u^{(m)}}                                    \def\wnm{w^{(n,m)}}        \def\snm{{\mathcal S}_{nm}}                                                \def\ff{{f}}        \def\uu{{{u}}}            \def\startnewsection#1#2{\section{#1}\label{#2}\setcounter{equation}{0}}                            \def\NNp{{\mathbb N}}        \def\RR{{\mathbb R}}     \def\TT{{\mathbb T}}    \def\WW{{\mathbb W}}    \def\EE{{\mathbb E}}    \def\comma{ {\rm ,\qquad{}} }             \def\commaone{ {\rm ,\qquad{}} }                                                \def\indeq{\qquad{}}                
                    \def\colb{\color{black}} \definecolor{colorpppp}{rgb}{0.6,0.0,0.1} \definecolor{colorgggg}{rgb}{.0,0.4,0.0} \definecolor{colorgray}{rgb}{0.6,0.6,0.6}   \def\cole{\color{colorpppp}}                   \definecolor{colororange}{rgb}{0.8,0.2,0} \definecolor{colorpurple}{rgb}{0.6,0.0,0.6}  \def\cole{\color{black}} \begin{document} \title[Stochastic NSE with $L^p$ data]{Local existence of strong solutions to the stochastic Navier-Stokes equations with $L^{p}$~data} \author[I.~Kukavica]{Igor Kukavica} \address{Department of Mathematics, University of Southern California, Los Angeles, CA 90089} \email{kukavica@usc.edu} \author[F.~Xu]{Fanhui Xu} \address{Department of Mathematical Sciences, Carnegie Mellon University, Pittsburgh, PA 15213} \email{fanhuix@andrew.cmu.edu} \maketitle \tableofcontents \date{} \begin{abstract} For the stochastic Navier-Stokes equations  with a multiplicative white noise on ${\mathbb T}^{3}$, we prove that there exists a unique strong solution locally in time when the initial datum belongs to $L^p(\Omega; L^p)$ for~$p>3$. \end{abstract} \startnewsection{Introduction}{sec1} We consider the stochastic Navier-Stokes equation (SNSE)    \begin{align}     &d\uu( t,x)       =\Delta \uu( t,x)\,dt             -\mathcal{P}\bigl(( \uu( t,x)\cdot \nabla)\uu( t,x)\bigr)\,dt+\sigma(\uu( t,x))\,d\WW(t),     \label{FGHVBNFJHERTWERDFSDFGSDFGSDFGSDFGDFGWERTWERT01}      \\&     \nabla\cdot \uu = 0     \label{FGHVBNFJHERTWERDFSDFGSDFGSDFGSDFGDFGWERTWERT02}    \\&     \uu( 0,x)= \uu_0 ( x)\comma  ( t,x)\in( 0,\infty)\times\TT^3    \label{FGHVBNFJHERTWERDFSDFGSDFGSDFGSDFGDFGWERTWERT03}   \end{align} on the 3D torus $\TT^3=[0,1]^3$, where we assume that the initial datum $u_0$ satisfies $\nabla\cdot u_0=0$ and $\int_{\TT^3} u_0=0$.  The variable $u$ denotes the velocity and $\mathcal{P}$ stands for the Leray projector onto the mean zero divergence-free fields. The stochastic term $\sigma(\uu)d\WW(t)$ represents a \def\DFGDSFSDFGHSFGH{\int} \def\DFGDSFSDFGHSFGI{\partial} possibly infinite-dimensional multiplicative white noise and is interpreted in the It\^{o} sense. Physically, it accounts for random velocity-dependent perturbations during the flow evolution.  \par In this paper, we obtain a pathwise unique strong solution of \eqref{FGHVBNFJHERTWERDFSDFGSDFGSDFGSDFGDFGWERTWERT01}--\eqref{FGHVBNFJHERTWERDFSDFGSDFGSDFGSDFGDFGWERTWERT03} in $L^p(\TT^3)$ for  the full range of exponents $p>3$.  This problem was  previously addressed  in the paper \cite{KXZ}, where we used the  fixed point technique utilizing a multiplicative cut-off of the nonlinear term, which in turn led to the restriction $p>5$ for the initial data. In this paper, we change the approach to using a spectral Galerkin type approximation.  These approximations are generally well-suited for the Hilbert space setting but are not considered well-adapted for $L^p$ type approximations. However, using a square cut-off in Fourier space, we show that the solutions of the approximation converge in suitable  spaces to the sought-after local $L^{p}$ solution of the stochastic Navier-Stokes system for~$p>3$. \par When considering the approximating system and  assuming an additive noise,  one can write the equation as the difference of the stochastic Stokes equation  and an auxiliary deterministic equation; cf.~\cite{F, MS} for applications of this approach  to additive white noise and additive L\'{e}vy noise.  When having a general multiplicative noise, we lose such an advantage. Here, we reduce the finite-dimensional approximations of \eqref{FGHVBNFJHERTWERDFSDFGSDFGSDFGSDFGDFGWERTWERT01} to equations with an additive noise by a fixed point argument, and then we utilize stopping times to linearize $( \uu\cdot \nabla)\uu$ to derive energy estimates. However, introducing stopping times is known for bringing the potential issue of them being degenerate, which prevents one from claiming a limit on a nontrivial time interval for the sequence of approximating solutions. We circumvent this obstacle by employing the estimates in probability subspaces that monotonically expand to the whole probability space. In each of these subspaces, the sequence of stopping times has a positive limit almost surely, and thus we can extract a pathwise limit of approximating solutions in that space. Then, we extend the result to the whole probability space by using indicator functions and showing convergence in an appropriate sense. There are also some challenges when studying SNSE in non-Hilbert trajectory spaces. First, the usual Galerkin scheme does not converge in $L^p$ when $p\neq 2$. In this paper, we use the rectangular partial sums instead (cf.~\eqref{FGHVBNFJHERTWERDFSDFGSDFGSDFGSDFGDFGWERTWERT40} below) and prove their continuity in $L^p(l^2)$ by applying a vector analog of the Calder\'{o}n-Zygmund theorem. Yet as a consequence, there is no available result ensuring the existence of approximating solutions for this finite scheme. Hence, we first construct approximating solutions in $H^1(\TT^3)$. Then we derive estimates in $L^p(\TT^3)$, where  the one of the important ingredients is Lemma~\ref{T01}, which is proven in~\cite{KXZ} using \cite{R}. \par A study of the SNSE dates back to the work of Bensoussan and Temam (cf.~\cite{BeT}) in the 1970's. Early investigations were usually conducted in Hilbert settings. Existing results on this include~\cite{MeS}, where the SNSE with additive white noise was considered in 2D domains, and a global strong solution was shown to exist, and \cite{F}, where the consideration was in 3D. in~\cite{FS}, the authors proved the same results for the SNSE in 2D unbounded domains with multiplicative noise. in~\cite{GZ}, the authors constructed a maximal strong solution for the equation in 3D bounded domains assuming $H^1$ regularity of the initial data. If the SNSE is equipped with a non-degenerate noise and  sufficiently small initial data in $H^s$, then by \cite{Ki}, a global strong solution exists with large probability. Development of the theory naturally leads to considerations in Banach spaces. An effort in this direction includes \cite{AgV}, where the authors studied the equation with multiplicative noise in critical Besov spaces, and \cite{KXZ}, where the SNSE with $L^p$ initial data was considered. Under the condition that $p>5$, a unique strong solution exists globally with large probability. There are also investigations on various notions of solutions to the SNSE or to relevant equations with different types of noise; cf.~\cite{BF, FG, MR} for results on martingale solutions of the SNSE, \cite{BCF, CC, DZ, MS} for the mild formulation subject to white noise, and \cite{BT, ZBL1, ZBL2, FRS} for mild formulation with L\'{e}vy noise. in~\cite{GV}, the stochastic Euler equation with linear multiplicative noise was addressed in $W^{s,p}$, and in~\cite{BR}, the vorticity equation of SNSE with a convolution-type noise was considered. \par The paper is organized as follows. Section~\ref{sec2} is an introduction to the approximating finite dimensional scheme, while it also contains preliminaries on stochastic integration.  In Section~\ref{sec3}, we state our assumptions and the main result.  In Section~\ref{sec4}, we establish the local existence and the pathwise uniqueness of strong solutions. Furthermore,  we prove that \[   \EE\left[ \sup_{0\leq s\leq \tau}\Vert\uu(s,\cdot)\Vert_p^p \right] \leq  C\EE\Bigl[\Vert\uu_0\Vert_p^p +1 \bigr] \] and  \[   \EE\left[ \DFGDSFSDFGHSFGH_0^{\tau}  \sum_{j}    \DFGDSFSDFGHSFGH_{\TT^3} | \nabla (|\uu_j(s,x)|^{p/2})|^2 \,dx\,ds \right] \leq  C\EE\bigl[\Vert\uu_0\Vert_p^p +1 \bigr] , \] up to the maximal time of existence $\tau$. \par \startnewsection{Notation and preliminaries}{sec2} \subsection{Basic Notation}\colb The Fourier coefficients of an integrable function $f$ are denoted by   \begin{equation}    \mathcal{F}f(m)=\hat{f}(m)=\DFGDSFSDFGHSFGH_{\TT^3} f(x)e^{- 2\pi i m\cdot x}\,dx    \comma m\in \mathbb{Z}^d    ,    \llabel{8Th sw ELzX U3X7 Ebd1Kd Z7 v 1rN 3Gi irR XG KWK0 99ov BM0FDJ Cv k opY NQ2 aN9 4Z 7k0U nUKa mE3OjU 8D F YFF okb SI2 J9 V9gV lM8A LWThDP nP u 3EL 7HP D2V Da ZTgg zcCC mbvc70 qq P cC9 mt6 0og cr TiA3 HEjw TK8ymK eu J Mc4 q6d Vz2 00 XnYU tLR9 GYjPXv FO V r6W 1zU K1W bP ToaW JJuK nxBLnd 0f t DEb Mmj 4lo HY yhZy MjM9 1zQS4p 7z 8 eKa 9h0 Jrb ac ekci rexG 0z4n3x z0 Q OWS vFj 3jL hW XUIU 21iI AwJtI3 Rb W a90 I7r zAI qI 3UEl UJG7 tLtUXz w4 K QNE TvX zqW au jEMe nYlN IzLGxg B3 A uJ8 6VS 6Rc PJ 8OXW w8im tcKZEz Ho p 84G 1gS As0 PC owMI 2fLK TdD60y nH g 7lk NFj JLq Oo Qvfk fZBN G3o1Dg Cn 9 hyU h5V SP5 z6 1qvQ wceU dVJJsB vX D G4E LHQ HIa PT bMTr sLsm tXGyOB 7p 2 Os4 3US bq5 ik 4Lin 769O TkUxmp I8 u GYn fBK bYI 9A QzCF w3h0 geJftZ ZK U 74r Yle ajm km ZJdi TGHO OaSt1N nl B 7Y7 h0y oWJ ry rVrT zHO8 2S7oub QA W x9d z2X YWB e5 Kf3A LsUF vqgtM2 O2 I dim rjZ 7RN 28 4KGY trVa WW4nTZ XV b RVo Q77 hVL X6 K2kq FWFm aZnsF9 Ch p 8Kx rsc SGP iS tVXB J3xZ cD5IP4 Fu 9 Lcd TR2 Vwb cL DlGK 1ro3 EEyqEA zw 6 sKe Eg2 sFf jz MtrZ 9kbd xNw66c xf t lzD GZh xQA WQ KkSX jqmm rEpNuG 6P y loq 8hH lSf Ma LXm5 RzEX W4Y1Bq ibEQ04}   \end{equation} where $d$ is the space dimension, while the Fourier inversion formula reads   \begin{equation}    (\mathcal{F}^{-1}g)(x)       =\sum_{m\in  \mathbb{Z}^d} g(m)e^{2\pi i m\cdot x}      .    \llabel{ 3 UOh Yw9 5h6 f6 o8kw 6frZ wg6fIy XP n ae1 TQJ Mt2 TT fWWf jJrX ilpYGr Ul Q 4uM 7Ds p0r Vg 3gIE mQOz TFh9LA KO 8 csQ u6m h25 r8 WqRI DZWg SYkWDu lL 8 Gpt ZW1 0Gd SY FUXL zyQZ hVZMn9 am P 9aE Wzk au0 6d ZghM ym3R jfdePG ln 8 s7x HYC IV9 Hw Ka6v EjH5 J8Ipr7 Nk C xWR 84T Wnq s0 fsiP qGgs Id1fs5 3A T 71q RIc zPX 77 Si23 GirL 9MQZ4F pi g dru NYt h1K 4M Zilv rRk6 B4W5B8 Id 3 Xq9 nhx EN4 P6 ipZl a2UQ Qx8mda g7 r VD3 zdD rhB vk LDJo tKyV 5IrmyJ R5 e txS 1cv EsY xG zj2T rfSR myZo4L m5 D mqN iZd acg GQ 0KRw QKGX g9o8v8 wm B fUu tCO cKc zz kx4U fhuA a8pYzW Vq 9 Sp6 CmA cZL Mx ceBX Dwug sjWuii Gl v JDb 08h BOV C1 pni6 4TTq Opzezq ZB J y5o KS8 BhH sd nKkH gnZl UCm7j0 Iv Y jQE 7JN 9fd ED ddys 3y1x 52pbiG Lc a 71j G3e uli Ce uzv2 R40Q 50JZUB uK d U3m May 0uo S7 ulWD h7qG 2FKw2T JX z BES 2Jk Q4U Dy 4aJ2 IXs4 RNH41s py T GNh hk0 w5Z C8 B3nU Bp9p 8eLKh8 UO 4 fMq Y6w lcA GM xCHt vlOx MqAJoQ QU 1 e8a 2aX 9Y6 2r lIS6 dejK Y3KCUm 25 7 oCl VeE e8p 1z UJSv bmLd Fy7ObQ FN l J6F RdF kEm qM N0Fd NZJ0 8DYuq2 pL X JNz 4rO ZkZ X2 IjTD 1fVt z4BmFI Pi 0 GKD R2W PhO zH zTLP lbAE OT9XW0 gb T Lb3 XRQ qGG 8o 4TPE 6WREQ05}   \end{equation} We adopt the same notation for the Fourier transform of a distribution $f\in {\mathcal D}'=(C^{\infty}({\mathbb T}^{d}))'$. As usual, $W^{s,p}( \TT^d)$, where $p>1$, denotes the class of functions  $f\in\mathcal{D}'( \TT^d)$ for which $     \Vert  f\Vert_{s,p}      =     \Vert J^s f\Vert_p     <\infty $    , where   \begin{equation}     J^s f (x) :=\sum_{m\in  \mathbb{Z}^d}(1+4\pi^2| m|^{2})^{s/2}\hat{f}(m) e^{2\pi i m\cdot x}\comma x\in \mathbb{T}^d    \comma s\in{\mathbb R}     ,    \llabel{c uMqMXh s6 x Ofv 8st jDi u8 rtJt TKSK jlGkGw t8 n FDx jA9 fCm iu FqMW jeox 5Akw3w Sd 8 1vK 8c4 C0O dj CHIs eHUO hyqGx3 Kw O lDq l1Y 4NY 4I vI7X DE4c FeXdFV bC F HaJ sb4 OC0 hu Mj65 J4fa vgGo7q Y5 X tLy izY DvH TR zd9x SRVg 0Pl6Z8 9X z fLh GlH IYB x9 OELo 5loZ x4wag4 cn F aCE KfA 0uz fw HMUV M9Qy eARFe3 Py 6 kQG GFx rPf 6T ZBQR la1a 6Aeker Xg k blz nSm mhY jc z3io WYjz h33sxR JM k Dos EAA hUO Oz aQfK Z0cn 5kqYPn W7 1 vCT 69a EC9 LD EQ5S BK4J fVFLAo Qp N dzZ HAl JaL Mn vRqH 7pBB qOr7fv oa e BSA 8TE btx y3 jwK3 v244 dlfwRL Dc g X14 vTp Wd8 zy YWjw eQmF yD5y5l DN l ZbA Jac cld kx Yn3V QYIV v6fwmH z1 9 w3y D4Y ezR M9 BduE L7D9 2wTHHc Do g ZxZ WRW Jxi pv fz48 ZVB7 FZtgK0 Y1 w oCo hLA i70 NO Ta06 u2sY GlmspV l2 x y0X B37 x43 k5 kaoZ deyE sDglRF Xi 9 6b6 w9B dId Ko gSUM NLLb CRzeQL UZ m i9O 2qv VzD hz v1r6 spSl jwNhG6 s6 i SdX hob hbp 2u sEdl 95LP AtrBBi bP C wSh pFC CUa yz xYS5 78ro f3UwDP sC I pES HB1 qFP SW 5tt0 I7oz jXun6c z4 c QLB J4M NmI 6F 08S2 Il8C 0JQYiU lI 1 YkK oiu bVt fG uOeg Sllv b4HGn3 bS Z LlX efa eN6 v1 B6m3 Ek3J SXUIjX 8P d NKI UFN JvP Ha Vr4T eARP dXEV7B xM 0 A7w 7je p8M EQ06}   \end{equation} with $\Vert \cdot \Vert_p$ denoting the $L^{p}$ norm. When $p=2$, we also write $H^s( \TT^d)=W^{s,2}( \TT^d)$.  The Leray projector   \begin{equation}     ( \mathcal{P} \uu)_j( x)=\sum_{k=1}^{d}( \delta_{jk}+R_j R_k) \uu_k( x)\comma j=1,2,\ldots, d    \llabel{4Q ahOi hEVo Pxbi1V uG e tOt HbP tsO 5r 363R ez9n A5EJ55 pc L lQQ Hg6 X1J EW K8Cf 9kZm 14A5li rN 7 kKZ rY0 K10 It eJd3 kMGw opVnfY EG 2 orG fj0 TTA Xt ecJK eTM0 x1N9f0 lR p QkP M37 3r0 iA 6EFs 1F6f 4mjOB5 zu 5 GGT Ncl Bmk b5 jOOK 4yny My04oz 6m 6 Akz NnP JXh Bn PHRu N5Ly qSguz5 Nn W 2lU Yx3 fX4 hu LieH L30w g93Xwc gj 1 I9d O9b EPC R0 vc6A 005Q VFy1ly K7 o VRV pbJ zZn xY dcld XgQa DXY3gz x3 6 8OR JFK 9Uh XT e3xY bVHG oYqdHg Vy f 5kK Qzm mK4 9x xiAp jVkw gzJOdE 4v g hAv 9bV IHe wc Vqcb SUcF 1pHzol Nj T l1B urc Sam IP zkUS 8wwS a7wVWR 4D L VGf 1RF r59 9H tyGq hDT0 TDlooa mg j 9am png aWe nG XU2T zXLh IYOW5v 2d A rCG sLk s53 pW AuAy DQlF 6spKyd HT 9 Z1X n2s U1g 0D Llao YuLP PB6YKo D1 M 0fi qHU l4A Ia joiV Q6af VT6wvY Md 0 pCY BZp 7RX Hd xTb0 sjJ0 Beqpkc 8b N OgZ 0Tr 0wq h1 C2Hn YQXM 8nJ0Pf uG J Be2 vuq Duk LV AJwv 2tYc JOM1uK h7 p cgo iiK t0b 3e URec DVM7 ivRMh1 T6 p AWl upj kEj UL R3xN VAu5 kEbnrV HE 1 OrJ 2bx dUP yD vyVi x6sC BpGDSx jB C n9P Fiu xkF vw 0QPo fRjy 2OFItV eD B tDz lc9 xVy A0 de9Y 5h8c 7dYCFk Fl v WPD SuN VI6 MZ 72u9 MBtK 9BGLNs Yp l X2y b5U HgH AD bW8X Rzkv UJZShW QH G oEQ12}   \end{equation}  is defined for distributions that have mean zero over ${\mathbb T}^{d}$. Here,   \begin{equation}      R_j=-\frac{\DFGDSFSDFGHSFGI}{\DFGDSFSDFGHSFGI x_j}( -\Delta)^{-\frac{1}{2}}     \comma j=1,2,\ldots,d    \label{FGHVBNFJHERTWERDFSDFGSDFGSDFGSDFGDFGWERTWERT11}   \end{equation} are Riesz transforms. For convenience, we write   \begin{equation}      W_{\rm sol}^{s,p}=\{\mathcal{P}f: f\in W^{s,p}\}.    \llabel{KX yVA rsH TQ 1Vbd dK2M IxmTf6 wE T 9cX Fbu uVx Cb SBBp 0v2J MQ5Z8z 3p M EGp TU6 KCc YN 2BlW dp2t mliPDH JQ W jIR Rgq i5l AP gikl c8ru HnvYFM AI r Ih7 Ths 9tE hA AYgS swZZ fws19P 5w e JvM imb sFH Th CnSZ HORm yt98w3 U3 z ant zAy Twq 0C jgDI Etkb h98V4u o5 2 jjA Zz1 kLo C8 oHGv Z5Ru Gwv3kK 4W B 50T oMt q7Q WG 9mtb SIlc 87ruZf Kw Z Ph3 1ZA Osq 8l jVQJ LTXC gyQn0v KE S iSq Bpa wtH xc IJe4 SiE1 izzxim ke P Y3s 7SX 5DA SG XHqC r38V YP3Hxv OI R ZtM fqN oLF oU 7vNd txzw UkX32t 94 n Fdq qTR QOv Yq Ebig jrSZ kTN7Xw tP F gNs O7M 1mb DA btVB 3LGC pgE9hV FK Y LcS GmF 863 7a ZDiz 4CuJ bLnpE7 yl 8 5jg Many Thanks, POL OG EPOe Mru1 v25XLJ Fz h wgE lnu Ymq rX 1YKV Kvgm MK7gI4 6h 5 kZB OoJ tfC 5g VvA1 kNJr 2o7om1 XN p Uwt CWX fFT SW DjsI wuxO JxLU1S xA 5 ObG 3IO UdL qJ cCAr gzKM 08DvX2 mu i 13T t71 Iwq oF UI0E Ef5S V2vxcy SY I QGr qrB HID TJ v1OB 1CzD IDdW4E 4j J mv6 Ktx oBO s9 ADWB q218 BJJzRy UQ i 2Gp weE T8L aO 4ho9 5g4v WQmoiq jS w MA9 Cvn Gqx l1 LrYu MjGb oUpuvY Q2 C dBl AB9 7ew jc 5RJE SFGs ORedoM 0b B k25 VEK B8V A9 ytAE Oyof G8QIj2 7a I 3jy Rmz yET Kx pgUq 4Bvb cD1b1g KB y oE3 azg elV Nu 8iZEQ14}   \end{equation} \colb
When constructing solutions below, we use a finite approximation
scheme to~\eqref{FGHVBNFJHERTWERDFSDFGSDFGSDFGSDFGDFGWERTWERT01}.  For this purpose, we introduce the  rectangular partial sums   \begin{equation}    \mathcal{T}_n f(x)=\sum_{|k_1|\leq n_1}\cdots \sum_{|k_d|\leq n_d}\hat{f}(k)e^{2\pi i k\cdot x}=\DFGDSFSDFGHSFGH_{\TT^d} f(x-u)D_n(u)\,du,    \label{FGHVBNFJHERTWERDFSDFGSDFGSDFGSDFGDFGWERTWERT40}   \end{equation} for $f\in L^{1}(\mathbb{T}^{d})$, where $n=(n_1, \ldots, n_d)$ is a multi-index and  $D_n(u)=\sum_{|k_1|\leq n_1}\cdots \sum_{|k_d|\leq n_d}e^{2\pi i k\cdot u}$ is the rectangular Dirichlet kernel.  Denote   \begin{equation}    \minn{n}=\min\{n_1,\ldots,n_d\}    \comma n=(n_1,\ldots,n_d)\in\mathbb{N}_0^{d}    .    \llabel{1 w1tq twKx8C LN 2 8yn jdo jUW vN H9qy HaXZ GhjUgm uL I 87i Y7Q 9MQ Wa iFFS Gzt8 4mSQq2 5O N ltT gbl 8YD QS AzXq pJEK 7bGL1U Jn 0 f59 vPr wdt d6 sDLj Loo1 8tQXf5 5u p mTa dJD sEL pH 2vqY uTAm YzDg95 1P K FP6 pEi zIJ Qd 8Ngn HTND 6z6ExR XV 0 ouU jWT kAK AB eAC9 Rfja c43Ajk Xn H dgS y3v 5cB et s3VX qfpP BqiGf9 0a w g4d W9U kvR iJ y46G bH3U cJ86hW Va C Mje dsU cqD SZ 1DlP 2mfB hzu5dv u1 i 6eW 2YN LhM 3f WOdz KS6Q ov14wx YY d 8sa S38 hIl cP tS4l 9B7h FC3JXJ Gp s tll 7a7 WNr VM wunm nmDc 5duVpZ xT C l8F I01 jhn 5B l4Jz aEV7 CKMThL ji 1 gyZ uXc Iv4 03 3NqZ LITG Ux3ClP CB K O3v RUi mJq l5 blI9 GrWy irWHof lH 7 3ZT eZX kop eq 8XL1 RQ3a Uj6Ess nj 2 0MA 3As rSV ft 3F9w zB1q DQVOnH Cm m P3d WSb jst oj 3oGj advz qcMB6Y 6k D 9sZ 0bd Mjt UT hULG TWU9 Nmr3E4 CN b zUO vTh hqL 1p xAxT ezrH dVMgLY TT r Sfx LUX CMr WA bE69 K6XH i5re1f x4 G DKk iB7 f2D Xz Xez2 k2Yc Yc4QjU yM Y R1o DeY NWf 74 hByF dsWk 4cUbCR DX a q4e DWd 7qb Ot 7GOu oklg jJ00J9 Il O Jxn tzF VBC Ft pABp VLEE 2y5Qcg b3 5 DU4 igj 4dz zW soNF wvqj bNFma0 am F Kiv Aap pzM zr VqYf OulM HafaBk 6J r eOQ BaT EsJ BB tHXj n2EU CNleWp cv W JIg gWXEQ60}   \end{equation} It was shown in~\cite{G} that $\mathcal{T}_n$ is a continuous operator on all  $L^q(\TT^d)$ spaces ($1<q<\infty$), i.e., \begin{equation} \Vert\mathcal{T}_n f\Vert_q\leq C_q\Vert  f\Vert_q\comma f\in L^q(\TT^d), \label{FGHVBNFJHERTWERDFSDFGSDFGSDFGSDFGDFGWERTWERT42} \end{equation} and  \begin{equation} \Vert\mathcal{T}_n f-f\Vert_q\to 0\qquad\mbox{as }  \minn{n}\to\infty \label{FGHVBNFJHERTWERDFSDFGSDFGSDFGSDFGDFGWERTWERT53} \end{equation} \par The following lemma is one of our main devices for estimating the nonlinear terms. \par \cole \begin{Lemma} \label{T02} For all $q\in (1,\infty)$ and all $f\in W^{1,q}(\TT^d)$ with $x$-zero mean, there exists $\alpha(q)\in (0,1]$ such that   \begin{equation}\label{FGHVBNFJHERTWERDFSDFGSDFGSDFGSDFGDFGWERTWERT56}   \Vert (\mathcal{T}_n-\mathcal{T}_m   )f\Vert_q    \leq \frac{{C_{q,d}}}{ \minn{m}^{\alpha}\wedge \minn{n}^{\alpha}} \Vert  \nabla f\Vert_q,   \end{equation} for all multiindices $n=(n_1, \ldots, n_d)$ and $m=(m_1, \cdots, m_d)$. \end{Lemma}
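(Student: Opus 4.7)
The plan is to reduce the two-sided difference $\mathcal{T}_n - \mathcal{T}_m$ to a one-sided one, and then exploit the tensor-product structure of the rectangular partial sums to reduce everything to a one-dimensional Jackson-type inequality. I expect the value $\alpha = 1$ to be achievable.

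First, I would symmetrize. Set $M_j = \min(m_j,n_j)$ for each $j$, so that $\minn{M} = \minn{m} \wedge \minn{n}$. Since $M_j \leq m_j$ and $M_j \leq n_j$, the operators satisfy $\mathcal{T}_n \mathcal{T}_M = \mathcal{T}_m \mathcal{T}_M = \mathcal{T}_M$, and therefore
\[
\mathcal{T}_n - \mathcal{T}_m = (\mathcal{T}_n - \mathcal{T}_m)(I - \mathcal{T}_M).
\]
Combining this identity with the uniform $L^q$-bound \eqref{FGHVBNFJHERTWERDFSDFGSDFGSDFGSDFGDFGWERTWERT42}, the lemma reduces to showing
\[
\Vert (I - \mathcal{T}_M) f \Vert_q \leq \frac{C_{q,d}}{\minn{M}}\, \Vert \nabla f \Vert_q.
\]
This identity is the crucial observation: it produces a denominator involving $\minn{m} \wedge \minn{n}$ rather than only the larger multiindex.

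Next, I would use the tensor factorization $\mathcal{T}_M = S^{(1)}_{M_1} \cdots S^{(d)}_{M_d}$, where $S^{(j)}_k$ denotes the one-dimensional Dirichlet partial sum of order $k$ acting in the variable $x_j$. Since these one-variable operators commute,
\[
I - \mathcal{T}_M = \sum_{j=1}^{d} \Bigl(\prod_{i<j} S^{(i)}_{M_i}\Bigr)\, (I - S^{(j)}_{M_j}),
\]
and the leading product is uniformly $L^q(\TT^d)$-bounded by the one-dimensional M.~Riesz theorem applied slicewise. It therefore suffices to estimate $\Vert (I - S^{(j)}_{M_j}) f \Vert_q$ for each $j$. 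Slicing in the $x_j$ variable, this reduces to the classical one-dimensional Jackson-type inequality
\[
\Vert g - S_k g \Vert_{L^q(\TT)} \leq \frac{C_q}{k}\, \Vert g' \Vert_{L^q(\TT)}, \qquad 1 < q < \infty,
\]
which follows from Jackson's trigonometric approximation theorem $E_k(g)_q \leq C_q k^{-1} \Vert g' \Vert_q$ together with the standard bound $\Vert g - S_k g \Vert_q \leq (1 + \Vert S_k \Vert_{q \to q})\, E_k(g)_q$. Integrating in the remaining variables and using $M_j \geq \minn{M}$, summation over $j$ yields the estimate with $\alpha = 1$.

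The main technical input is the one-dimensional Jackson inequality, and this is where the restriction $q \in (1,\infty)$ is essential: the Dirichlet projection fails to be $L^q$-bounded at the endpoints $q \in \{1, \infty\}$, so the argument would break down there. Apart from this, the proof is a standard tensor--telescoping reduction combined with the $L^q$-continuity of the rectangular partial sums already recorded in \eqref{FGHVBNFJHERTWERDFSDFGSDFGSDFGSDFGDFGWERTWERT42}.
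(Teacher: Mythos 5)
Your proof is correct, and it takes a genuinely different route from the paper's. The paper writes $\mathcal{T}_n-\mathcal{T}_m$ applied to $f=\Delta^{-1}\mathrm{div}\,\nabla f$, obtains the decay rate $(\bar m\wedge\bar n)^{-1}$ on $L^2$ by Parseval (since the symbol of $\mathcal{T}_n-\mathcal{T}_m$ lives on frequencies $|k|\gtrsim \bar m\wedge\bar n$, where $\Delta^{-1}\mathrm{div}$ gains one power of $|k|$), pairs this with a mere $L^r$-boundedness estimate for an auxiliary exponent $r$ on the other side of $q$, and interpolates by Marcinkiewicz; this yields only a fractional $\alpha(q)=(1/r-1/q)/(1/r-1/2)\in(0,1)$. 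Your argument instead exploits the projection identity $\mathcal{T}_n-\mathcal{T}_m=(\mathcal{T}_n-\mathcal{T}_m)(I-\mathcal{T}_M)$ with $M_j=\min(m_j,n_j)$ (valid because the multiplier symbols are indicators of nested rectangles), telescopes $I-\mathcal{T}_M$ into one--variable factors $I-S^{(j)}_{M_j}$, and invokes the one--dimensional Jackson--Lebesgue estimate $\|g-S_kg\|_q\le C_qk^{-1}\|g'\|_q$ slicewise; each step is sound, including the observation that the sliced functions need not have zero mean since Jackson's inequality does not require it. What you buy is the sharper exponent $\alpha=1$ uniformly in $q$ (which the statement permits, as it only asks for some $\alpha(q)\in(0,1]$, and which would in fact slightly strengthen the convergence rates used downstream); what the paper's route buys is that it needs nothing beyond Parseval, the $L^q$-boundedness of $\mathcal{T}_n$ and of the Riesz transforms, and standard interpolation, avoiding any appeal to trigonometric approximation theory. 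The only caveat, shared equally by both arguments, is the degenerate case $\bar m\wedge\bar n=0$, where the claimed bound is vacuous (or handled by Poincar\'e for the $k=0$ projection), so it does not affect the validity of your proof.
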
 \colb   \par   \begin{proof}[Proof of Lemma~\ref{T02}] Let ${u}\in (C^{\infty}(\TT^d))^d$, and denote   \begin{equation}     T^{(n,m)}    =\mathcal{T}_{nm} \Delta^{-1}\mbox{div}     ,    \llabel{ Ksn B3 wvmo WK49 Nl492o gR 6 fvc 8ff jJm sW Jr0j zI9p CBsIUV of D kKH Ub7 vxp uQ UXA6 hMUr yvxEpc Tq l Tkz z0q HbX pO 8jFu h6nw zVPPzp A8 9 61V 78c O2W aw 0yGn CHVq BVjTUH lk p 6dG HOd voE E8 cw7Q DL1o 1qg5TX qo V 720 hhQ TyF tp TJDg 9E8D nsp1Qi X9 8 ZVQ N3s duZ qc n9IX ozWh Fd16IB 0K 9 JeB Hvi 364 kQ lFMM JOn0 OUBrnv pY y jUB Ofs Pzx l4 zcMn JHdq OjSi6N Mn 8 bR6 kPe klT Fd VlwD SrhT 8Qr0sC hN h 88j 8ZA vvW VD 03wt ETKK NUdr7W EK 1 jKS IHF Kh2 sr 1RRV Ra8J mBtkWI 1u k uZT F2B 4p8 E7 Y3p0 DX20 JM3XzQ tZ 3 bMC vM4 DEA wB Fp8q YKpL So1a5s dR P fTg 5R6 7v1 T4 eCJ1 qg14 CTK7u7 ag j Q0A tZ1 Nh6 hk Sys5 CWon IOqgCL 3u 7 feR BHz odS Jp 7JH8 u6Rw sYE0mc P4 r LaW Atl yRw kH F3ei UyhI iA19ZB u8 m ywf 42n uyX 0e ljCt 3Lkd 1eUQEZ oO Z rA2 Oqf oQ5 Ca hrBy KzFg DOseim 0j Y BmX csL Ayc cC JBTZ PEjy zPb5hZ KW O xT6 dyt u82 Ia htpD m75Y DktQvd Nj W jIQ H1B Ace SZ KVVP 136v L8XhMm 1O H Kn2 gUy kFU wN 8JML Bqmn vGuwGR oW U oNZ Y2P nmS 5g QMcR YHxL yHuDo8 ba w aqM NYt onW u2 YIOz eB6R wHuGcn fi o 47U PM5 tOj sz QBNq 7mco fCNjou 83 e mcY 81s vsI 2Y DS3S yloB Nx5FBV Bc 9 6HZ EOX UO3 W1 fIF5 jtEM W6KW7D 6EQ10}   \end{equation} where $    \mathcal{T}_{nm}=\mathcal{T}_n-\mathcal{T}_m    $ and   \begin{equation}   \Delta^{-1}\mbox{div}~{u}   =   \sum_{k=(k_1,\ldots,k_d)\neq 0}   \left(\frac{-1}{4\pi^2 |k|^2}\sum_{l=1}^d     2\pi i k_l    \hat{u}_l(k)   \right)e^{2\pi i k\cdot x}.    \llabel{3 t H0F CVT Zup Pl A9aI oN2s f1Bw31 gg L FoD O0M x18 oo heEd KgZB Cqdqpa sa H Fhx BrE aRg Au I5dq mWWB MuHfv9 0y S PtG hFF dYJ JL f3Ap k5Ck Szr0Kb Vd i sQk uSA JEn DT YkjP AEMu a0VCtC Ff z 9R6 Vht 8Ua cB e7op AnGa 7AbLWj Hc s nAR GMb n7a 9n paMf lftM 7jvb20 0T W xUC 4lt e92 9j oZrA IuIa o1Zqdr oC L 55L T4Q 8kN yv sIzP x4i5 9lKTq2 JB B sZb QCE Ctw ar VBMT H1QR 6v5srW hR r D4r wf8 ik7 KH Egee rFVT ErONml Q5 L R8v XNZ LB3 9U DzRH ZbH9 fTBhRw kA 2 n3p g4I grH xd fEFu z6RE tDqPdw N7 H TVt cE1 8hW 6y n4Gn nCE3 MEQ51i Ps G Z2G Lbt CSt hu zvPF eE28 MM23ug TC d j7z 7Av TLa 1A GLiJ 5JwW CiDPyM qa 8 tAK QZ9 cfP 42 kuUz V3h6 GsGFoW m9 h cfj 51d GtW yZ zC5D aVt2 Wi5IIs gD B 0cX LM1 FtE xE RIZI Z0Rt QUtWcU Cm F mSj xvW pZc gl dopk 0D7a EouRku Id O ZdW FOR uqb PY 6HkW OVi7 FuVMLW nx p SaN omk rC5 uI ZK9C jpJy UIeO6k gb 7 tr2 SCY x5F 11 S6Xq OImr s7vv0u vA g rb9 hGP Fnk RM j92H gczJ 660kHb BB l QSI OY7 FcX 0c uyDl LjbU 3F6vZk Gb a KaM ufj uxp n4 Mi45 7MoL NW3eIm cj 6 OOS e59 afA hg lt9S BOiF cYQipj 5u N 19N KZ5 Czc 23 1wxG x1ut gJB4ue Mx x 5lr s8g VbZ s1 NEfI 02Rb pkfEOZ E4 e seo 9te NRU Ai nujf eJEQ33}   \end{equation} (Below, we shall choose ${u}=\nabla f$.) Note that $\Delta^{-1}\mbox{div}~\nabla f=f$ since $f$ has zero mean.  By the orthogonality of $\{e^{2\pi i k\cdot x}\}$ and Parseval's identity,  we have   \begin{equation}\llabel{Ya Ehns0Y 6X R UF1 PCf 5eE AL 9DL6 a2vm BAU5Au DD t yQN 5YL LWw PW GjMt 4hu4 FIoLCZ Lx e BVY 5lZ DCD 5Y yBwO IJeH VQsKob Yd q fCX 1to mCb Ej 5m1p Nx9p nLn5A3 g7 U v77 7YU gBR lN rTyj shaq BZXeAF tj y FlW jfc 57t 2f abx5 Ns4d clCMJc Tl q kfq uFD iSd DP eX6m YLQz JzUmH0 43 M lgF edN mXQ Pj Aoba 07MY wBaC4C nj I 4dw KCZ PO9 wx 3en8 AoqX 7JjN8K lq j Q5c bMS dhR Fs tQ8Q r2ve 2HT0uO 5W j TAi iIW n1C Wr U1BH BMvJ 3ywmAd qN D LY8 lbx XMx 0D Dvco 3RL9 Qz5eqy wV Y qEN nO8 MH0 PY zeVN i3yb 2msNYY Wz G 2DC PoG 1Vb Bx e9oZ GcTU 3AZuEK bk p 6rN eTX 0DS Mc zd91 nbSV DKEkVa zI q NKU Qap NBP 5B 32Ey prwP FLvuPi wR P l1G TdQ BZE Aw 3d90 v8P5 CPAnX4 Yo 2 q7s yr5 BW8 Hc T7tM ioha BW9U4q rb u mEQ 6Xz MKR 2B REFX k3ZO MVMYSw 9S F 5ek q0m yNK Gn H0qi vlRA 18CbEz id O iuy ZZ6 kRo oJ kLQ0 Ewmz sKlld6 Kr K JmR xls 12K G2 bv8v LxfJ wrIcU6 Hx p q6p Fy7 Oim mo dXYt Kt0V VH22OC Aj f deT BAP vPl oK QzLE OQlq dpzxJ6 JI z Ujn TqY sQ4 BD QPW6 784x NUfsk0 aM 7 8qz MuL 9Mr Ac uVVK Y55n M7WqnB 2R C pGZ vHh WUN g9 3F2e RT8U umC62V H3 Z dJX LMS cca 1m xoOO 6oOL OVzfpO BO X 5Ev KuL z5s EW 8a9y otqk cKbDJN Us l pYM JpJ jOWEQ43}   \Vert T^{(n,m)} {u}\Vert_2^2\leq \sum_{|k|\geq \minn{m}\wedge \minn{n}}   \biggl|\frac{1}{4\pi^2 |k|^2}   \sum_{l=1}^d   2\pi i k_l   \hat{u}_l(k)   \biggr|^2   \leq    \frac{C_d}{\minn{m}^2\wedge \minn{n}^{2}} \Vert  {u}\Vert_2^2,    \end{equation} from where   \begin{equation}\label{FGHVBNFJHERTWERDFSDFGSDFGSDFGSDFGDFGWERTWERT62}   \Vert T^{(n,m)} {u}\Vert_2\leq \frac{C_d}{\minn{m}\wedge \minn{n}} \Vert  {u}\Vert_2.    \end{equation} Substituting ${u}$ by $\nabla f$ yields    \begin{equation}\llabel{ Uy 2U4Y VKH6 kVC1Vx 1u v ykO yDs zo5 bz d36q WH1k J7Jtkg V1 J xqr Fnq mcU yZ JTp9 oFIc FAk0IT A9 3 SrL axO 9oU Z3 jG6f BRL1 iZ7ZE6 zj 8 G3M Hu8 6Ay jt 3flY cmTk jiTSYv CF t JLq cJP tN7 E3 POqG OKe0 3K3WV0 ep W XDQ C97 YSb AD ZUNp 81GF fCPbj3 iq E t0E NXy pLv fo Iz6z oFoF 9lkIun Xj Y yYL 52U bRB jx kQUS U9mm XtzIHO Cz 1 KH4 9ez 6Pz qW F223 C0Iz 3CsvuT R9 s VtQ CcM 1eo pD Py2l EEzL U0USJt Jb 9 zgy Gyf iQ4 fo Cx26 k4jL E0ula6 aS I rZQ HER 5HV CE BL55 WCtB 2LCmve TD z Vcp 7UR gI7 Qu FbFw 9VTx JwGrzs VW M 9sM JeJ Nd2 VG GFsi WuqC 3YxXoJ GK w Io7 1fg sGm 0P YFBz X8eX 7pf9GJ b1 o XUs 1q0 6KP Ls MucN ytQb L0Z0Qq m1 l SPj 9MT etk L6 KfsC 6Zob Yhc2qu Xy 9 GPm ZYj 1Go ei feJ3 pRAf n6Ypy6 jN s 4Y5 nSE pqN 4m Rmam AGfY HhSaBr Ls D THC SEl UyR Mh 66XU 7hNz pZVC5V nV 7 VjL 7kv WKf 7P 5hj6 t1vu gkLGdN X8 b gOX HWm 6W4 YE mxFG 4WaN EbGKsv 0p 4 OG0 Nrd uTe Za xNXq V4Bp mOdXIq 9a b PeD PbU Z4N Xt ohbY egCf xBNttE wc D YSD 637 jJ2 ms 6Ta1 J2xZ PtKnPw AX A tJA Rc8 n5d 93 TZi7 q6Wo nEDLwW Sz e Sue YFX 8cM hm Y6is 15pX aOYBbV fS C haL kBR Ks6 UO qG4j DVab fbdtny fi D BFI 7uh B39 FJ 6mYr CUUT f2X38J 43 K EQ41}   \Vert \mathcal{T}_{nm} f\Vert_2\leq \frac{C_2}{ \minn{m}\wedge \minn{n}} \Vert  \nabla f\Vert_2   ,   \end{equation} where $C_2$ is a constant depending only on the dimension $d$. Now, let $q\in(1,2)$ be arbitrary and choose $r=(1+q)/2\in (1,q)$. Then we have   \begin{equation}\label{FGHVBNFJHERTWERDFSDFGSDFGSDFGSDFGDFGWERTWERT35}   \Vert T^{(n,m)} {u}\Vert_r   \leq C_r \Vert  \Delta^{-1}\mbox{div}~{u}\Vert_r   \leq C_r \Vert \nabla \Delta^{-1}\mbox{div}~{u}\Vert_r   \leq C_r \Vert  {u}\Vert_r   ,   \end{equation} where the constant $C_r$ depends on $r$ and thus on $q$; in the second inequality in \eqref{FGHVBNFJHERTWERDFSDFGSDFGSDFGSDFGDFGWERTWERT35}, we used the Poincar\'e inequality $\Vert f\Vert_{r}\leq C_r \Vert \nabla f\Vert_{r}$, which holds for $r\geq 1$ and $f\in W^{1,r}$ such that $\DFGDSFSDFGHSFGH_{{\mathbb T}^{d}} f=0$. Using the Marcinkiewicz interpolation theorem on the inequalities \eqref{FGHVBNFJHERTWERDFSDFGSDFGSDFGSDFGDFGWERTWERT62} and \eqref{FGHVBNFJHERTWERDFSDFGSDFGSDFGSDFGDFGWERTWERT35} yields   \begin{equation}   \Vert T^{(n,m)} {u}\Vert_q\leq C\frac{C_d^{\alpha}C_r^{1-\alpha}}{\minn{m}^{\alpha}\wedge \minn{n}^{\alpha}} \Vert  {u}\Vert_q   ,    \label{FGHVBNFJHERTWERDFSDFGSDFGSDFGSDFGDFGWERTWERT13}   \end{equation} where $\alpha\in(0,1)$ is such that $1/q=\alpha/2+(1-\alpha)/r$, i.e., $\alpha=(1/r-1/q)/(1/r-1/2)$.  Substituting ${u}=\nabla f$  in \eqref{FGHVBNFJHERTWERDFSDFGSDFGSDFGSDFGDFGWERTWERT13} yields \eqref{FGHVBNFJHERTWERDFSDFGSDFGSDFGSDFGDFGWERTWERT56} for $q\in(1,2)$. The proof for $q\in(2,\infty)$ is the same except that we take $r=2q$. \end{proof} \par   Above and in the rest of this paper, $C$ denotes a generic positive constant,  with additional dependence indicated when necessary. \par \subsection{Preliminaries on stochastic analysis} \colb We denote by $\mathcal{H}$ a real separable Hilbert space with a complete orthonormal basis $\{\mathbf{e}_k\}_{k\geq 1}$, and by $(\Omega, \mathcal{F},(\mathcal{F}_t)_{t\geq 0},\mathbb{P})$ a complete probability space with an augmented filtration $(\mathcal{F}_t)_{t\geq 0}$.  With $\{W_k: k\in\NNp\}$  a family of independent $\mathcal{F}_t$-adapted Brownian motions, $\WW( t,\omega):=\sum_{k\geq 1} W_k( t,\omega) \mathbf{e}_k$ is an $\mathcal{F}_t$-adapted and $\mathcal{H}$-valued cylindrical Wiener process. \par For a real separable Hilbert space $\mathcal{Y}$, we define $l^2( \mathcal{H},\mathcal{Y})$ to be the set of Hilbert-Schmidt operators from $\mathcal{H}$ to $\mathcal{Y}$ with the norm  defined by   \begin{equation}    \Vert G\Vert_{l^2( \mathcal{H},\mathcal{Y})}^2:= \sum_{k=1}^{\dim \mathcal{H}} | G \mathbf{e}_k|_{\mathcal{Y}}^2<\infty \comma G\in l^2( \mathcal{H},\mathcal{Y}).    \llabel{yZg 87i gFR 5R z1t3 jH9x lOg1h7 P7 W w8w jMJ qH3 l5 J5wU 8eH0 OogRCv L7 f JJg 1ug RfM XI GSuE Efbh 3hdNY3 x1 9 7jR qeP cdu sb fkuJ hEpw MvNBZV zL u qxJ 9b1 BTf Yk RJLj Oo1a EPIXvZ Aj v Xne fhK GsJ Ga wqjt U7r6 MPoydE H2 6 203 mGi JhF nT NCDB YlnP oKO6Pu XU 3 uu9 mSg 41v ma kk0E WUpS UtGBtD e6 d Kdx ZNT FuT i1 fMcM hq7P Ovf0hg Hl 8 fqv I3R K39 fn 9MaC Zgow 6e1iXj KC 5 lHO lpG pkK Xd Dxtz 0HxE fSMjXY L8 F vh7 dmJ kE8 QA KDo1 FqML HOZ2iL 9i I m3L Kva YiN K9 sb48 NxwY NR0nx2 t5 b WCk x2a 31k a8 fUIa RGzr 7oigRX 5s m 9PQ 7Sr 5St ZE Ymp8 VIWS hdzgDI 9v R F5J 81x 33n Ne fjBT VvGP vGsxQh Al G Fbe 1bQ i6J ap OJJa ceGq 1vvb8r F2 F 3M6 8eD lzG tX tVm5 y14v mwIXa2 OG Y hxU sXJ 0qg l5 ZGAt HPZd oDWrSb BS u NKi 6KW gr3 9s 9tc7 WM4A ws1PzI 5c C O7Z 8y9 lMT LA dwhz Mxz9 hjlWHj bJ 5 CqM jht y9l Mn 4rc7 6Amk KJimvH 9r O tbc tCK rsi B0 4cFV Dl1g cvfWh6 5n x y9Z S4W Pyo QB yr3v fBkj TZKtEZ 7r U fdM icd yCV qn D036 HJWM tYfL9f yX x O7m IcF E1O uL QsAQ NfWv 6kV8Im 7Q 6 GsX NCV 0YP oC jnWn 6L25 qUMTe7 1v a hnH DAo XAb Tc zhPc fjrj W5M5G0 nz N M5T nlJ WOP Lh M6U2 ZFxw pg4Nej P8 U Q09 JX9 n7S kE WixE Rwgy FEQ15}   \end{equation} In this paper, we either regard \eqref{FGHVBNFJHERTWERDFSDFGSDFGSDFGSDFGDFGWERTWERT01} as a vector-valued equation or consider it componentwise. Correspondingly, $\mathcal{Y}={\mathbb R}$ or ${\mathbb R}^{d}$. Let $G=(G_1, \cdots, G_d)$ and $G\mathbf{e}_k:=(G_1\mathbf{e}_k, \cdots, G_d\mathbf{e}_k)$. Then $G\in l^2( \mathcal{H},{\mathbb R}^{d})$ if and only if $G_i\in l^2( \mathcal{H},{\mathbb R})$ for all $i\in\{1, \cdots, d\}$. The Burkholder-Davis-Gundy (BDG) inequality   \begin{equation}    \EE \left[ \sup_{s\in[0,t]}\left| \DFGDSFSDFGHSFGH_0^s G \,d\WW_r \right|_{\mathcal{Y}}^p\right]      \leq         C \EE\left[ \left(\DFGDSFSDFGHSFGH_0^t \Vert G\Vert^2_{ l^2( \mathcal{H},\mathcal{Y})}\, dr \right)^{p/2}\right]    \llabel{vttzp 4A s v5F Tnn MzL Vh FUn5 6tFY CxZ1Bz Q3 E TfD lCa d7V fo MwPm ngrD HPfZV0 aY k Ojr ZUw 799 et oYuB MIC4 ovEY8D OL N URV Q5l ti1 iS NZAd wWr6 Q8oPFf ae 5 lAR 9gD RSi HO eJOW wxLv 20GoMt 2H z 7Yc aly PZx eR uFM0 7gaV 9UIz7S 43 k 5Tr ZiD Mt7 pE NCYi uHL7 gac7Gq yN 6 Z1u x56 YZh 2d yJVx 9MeU OMWBQf l0 E mIc 5Zr yfy 3i rahC y9Pi MJ7ofo Op d enn sLi xZx Jt CjC9 M71v O0fxiR 51 m FIB QRo 1oW Iq 3gDP stD2 ntfoX7 YU o S5k GuV IGM cf HZe3 7ZoG A1dDmk XO 2 KYR LpJ jII om M6Nu u8O0 jO5Nab Ub R nZn 15k hG9 4S 21V4 Ip45 7ooaiP u2 j hIz osW FDu O5 HdGr djvv tTLBjo vL L iCo 6L5 Lwa Pm vD6Z pal6 9Ljn11 re T 2CP mvj rL3 xH mDYK uv5T npC1fM oU R RTo Loi lk0 FE ghak m5M9 cOIPdQ lG D LnX erC ykJ C1 0FHh vvnY aTGuqU rf T QPv wEq iHO vO hD6A nXuv GlzVAv pz d Ok3 6ym yUo Fb AcAA BItO es52Vq d0 Y c7U 2gB t0W fF VQZh rJHr lBLdCx 8I o dWp AlD S8C HB rNLz xWp6 ypjuwW mg X toy 1vP bra uH yMNb kUrZ D6Ee2f zI D tkZ Eti Lmg re 1woD juLB BSdasY Vc F Uhy ViC xB1 5y Ltql qoUh gL3bZN YV k orz wa3 650 qW hF22 epiX cAjA4Z V4 b cXx uB3 NQN p0 GxW2 Vs1z jtqe2p LE B iS3 0E0 NKH gY N50v XaK6 pNpwdB X2 Y v7V 0Ud dTc Pi EQ34}   \end{equation} holds for $p\in [1,\infty)$ and all $G\in l^2( \mathcal{H},\mathcal{Y})$ such that the right hand side above is finite.  For $s\geq0$, introduce   \begin{equation}     \mathbb{W}^{s,p}:=\left\{f\colon\TT^d\to l^2( \mathcal{H},\mathcal{Y})            :              f\mathbf{e}_k\in W^{s,p}(\TT^d) \mbox{ for each }k, \mbox{ and } \DFGDSFSDFGHSFGH_{\TT^d} \Vert J^s f\Vert_{l^2( \mathcal{H},\mathcal{Y})}^p \,dx<\infty                       \right\},    \llabel{dRNN CLG4 7Fc3PL Bx K 3Be x1X zyX cj 0Z6a Jk0H KuQnwd Dh P Q1Q rwA 05v 9c 3pnz ttzt x2IirW CZ B oS5 xlO KCi D3 WFh4 dvCL QANAQJ Gg y vOD NTD FKj Mc 0RJP m4HU SQkLnT Q4 Y 6CC MvN jAR Zb lir7 RFsI NzHiJl cg f xSC Hts ZOG 1V uOzk 5G1C LtmRYI eD 3 5BB uxZ JdY LO CwS9 lokS NasDLj 5h 8 yni u7h u3c di zYh1 PdwE l3m8Xt yX Q RCA bwe aLi N8 qA9N 6DRE wy6gZe xs A 4fG EKH KQP PP KMbk sY1j M4h3Jj gS U One p1w RqN GA grL4 c18W v4kchD gR x 7Gj jIB zcK QV f7gA TrZx Oy6FF7 y9 3 iuu AQt 9TK Rx S5GO TFGx 4Xx1U3 R4 s 7U1 mpa bpD Hg kicx aCjk hnobr0 p4 c ody xTC kVj 8t W4iP 2OhT RF6kU2 k2 o oZJ Fsq Y4B FS NI3u W2fj OMFf7x Jv e ilb UVT ArC Tv qWLi vbRp g2wpAJ On l RUE PKh j9h dG M0Mi gcqQ wkyunB Jr T LDc Pgn OSC HO sSgQ sR35 MB7Bgk Pk 6 nJh 01P Cxd Ds w514 O648 VD8iJ5 4F W 6rs 6Sy qGz MK fXop oe4e o52UNB 4Q 8 f8N Uz8 u2n GO AXHW gKtG AtGGJs bm z 2qj vSv GBu 5e 4JgL Aqrm gMmS08 ZF s xQm 28M 3z4 Ho 1xxj j8Uk bMbm8M 0c L PL5 TS2 kIQ jZ Kb9Q Ux2U i5Aflw 1S L DGI uWU dCP jy wVVM 2ct8 cmgOBS 7d Q ViX R8F bta 1m tEFj TO0k owcK2d 6M Z iW8 PrK PI1 sX WJNB cREV Y4H5QQ GH b plP bwd Txp OI 5OQZ AKyi ix7Qey YI 9 1Ea EQ17}   \end{equation} which are Banach spaces with respect to the norm    \begin{equation}    \Vert f\Vert_{\mathbb{W}^{s,p}}:=\left( \DFGDSFSDFGHSFGH_{\TT^d} \Vert J^s f\Vert_{l^2( \mathcal{H},\mathcal{Y})}^p \,dx\right)^{1/p}.     \llabel{16r KXK L2 ifQX QPdP NL6EJi Hc K rBs 2qG tQb aq edOj Lixj GiNWr1 Pb Y SZe Sxx Fin aK 9Eki CHV2 a13f7G 3G 3 oDK K0i bKV y4 53E2 nFQS 8Hnqg0 E3 2 ADd dEV nmJ 7H Bc1t 2K2i hCzZuy 9k p sHn 8Ko uAR kv sHKP y8Yo dOOqBi hF 1 Z3C vUF hmj gB muZq 7ggW Lg5dQB 1k p Fxk k35 GFo dk 00YD 13qI qqbLwy QC c yZR wHA fp7 9o imtC c5CV 8cEuwU w7 k 8Q7 nCq WkM gY rtVR IySM tZUGCH XV 9 mr9 GHZ ol0 VE eIjQ vwgw 17pDhX JS F UcY bqU gnG V8 IFWb S1GX az0ZTt 81 w 7En IhF F72 v2 PkWO Xlkr w6IPu5 67 9 vcW 1f6 z99 lM 2LI1 Y6Na axfl18 gT 0 gDp tVl CN4 jf GSbC ro5D v78Cxa uk Y iUI WWy YDR w8 z7Kj Px7C hC7zJv b1 b 0rF d7n Mxk 09 1wHv y4u5 vLLsJ8 Nm A kWt xuf 4P5 Nw P23b 06sF NQ6xgD hu R GbK 7j2 O4g y4 p4BL top3 h2kfyI 9w O 4Aa EWb 36Y yH YiI1 S3CO J7aN1r 0s Q OrC AC4 vL7 yr CGkI RlNu GbOuuk 1a w LDK 2zl Ka4 0h yJnD V4iF xsqO00 1r q CeO AO2 es7 DR aCpU G54F 2i97xS Qr c bPZ 6K8 Kud n9 e6SY o396 Fr8LUx yX O jdF sMr l54 Eh T8vr xxF2 phKPbs zr l pMA ubE RMG QA aCBu 2Lqw Gasprf IZ O iKV Vbu Vae 6a bauf y9Kc Fk6cBl Z5 r KUj htW E1C nt 9Rmd whJR ySGVSO VT v 9FY 4uz yAH Sp 6yT9 s6R6 oOi3aq Zl L 7bI vWZ 18c Fa iwpt C1nd Fyp4oEQ18}   \end{equation} Above, we denoted $(J^s f) \mathbf{e}_k=J^s (f \mathbf{e}_k)$. Also, $\mathbb{W}^{0,p}$ is abbreviated as $\mathbb{L}^{p}$. If $f\in \mathbb{L}^{2}$, then $\DFGDSFSDFGHSFGH_0^t f\,d\WW_t$ is an $L^{2}(\TT^d)$-valued Wiener process (cf.~\cite{DZ2}).  \par Letting $( \mathcal{P}f)  \mathbf{e}_k=\mathcal{P} (f \mathbf{e}_k)$, where $\mathcal{P}$ is the Leray projector, we have
$\mathcal{P}f\in \mathbb{W}^{s,p}$ if $ f\in \mathbb{W}^{s,p}$. Write   \begin{equation}    \mathbb{W}_{\rm sol}^{s,p}=\{\mathcal{P}f: f\in \mathbb{W}^{s,p}\}.    \llabel{K xD f Qz2 813 6a8 zX wsGl Ysh9 Gp3Tal nr R UKt tBK eFr 45 43qU 2hh3 WbYw09 g2 W LIX zvQ zMk j5 f0xL seH9 dscinG wu P JLP 1gE N5W qY sSoW Peqj MimTyb Hj j cbn 0NO 5hz P9 W40r 2w77 TAoz70 N1 a u09 boc DSx Gc 3tvK LXaC 1dKgw9 H3 o 2kE oul In9 TS PyL2 HXO7 tSZse0 1Z 9 Hds lDq 0tm SO AVqt A1FQ zEMKSb ak z nw8 39w nH1 Dp CjGI k5X3 B6S6UI 7H I gAa f9E V33 Bk kuo3 FyEi 8Ty2AB PY z SWj Pj5 tYZ ET Yzg6 Ix5t ATPMdl Gk e 67X b7F ktE sz yFyc mVhG JZ29aP gz k Yj4 cEr HCd P7 XFHU O9zo y4AZai SR O pIn 0tp 7kZ zU VHQt m3ip 3xEd41 By 7 2ux IiY 8BC Lb OYGo LDwp juza6i Pa k Zdh aD3 xSX yj pdOw oqQq Jl6RFg lO t X67 nm7 s1l ZJ mGUr dIdX Q7jps7 rc d ACY ZMs BKA Nx tkqf Nhkt sbBf2O BN Z 5pf oqS Xtd 3c HFLN tLgR oHrnNl wR n ylZ NWV NfH vO B1nU Ayjt xTWW4o Cq P Rtu Vua nMk Lv qbxp Ni0x YnOkcd FB d rw1 Nu7 cKy bL jCF7 P4dx j0Sbz9 fa V CWk VFo s9t 2a QIPK ORuE jEMtbS Hs Y eG5 Z7u MWW Aw RnR8 FwFC zXVVxn FU f yKL Nk4 eOI ly n3Cl I5HP 8XP6S4 KF f Il6 2Vl bXg ca uth8 61pU WUx2aQ TW g rZw cAx 52T kq oZXV g0QG rBrrpe iw u WyJ td9 ooD 8t UzAd LSnI tarmhP AW B mnm nsb xLI qX 4RQS TyoF DIikpe IL h WZZ 8ic JGa 91 HxRbEQ19}   \end{equation} Letting $( \mathcal{T}_nf)  \mathbf{e}_k=\mathcal{T}_n(f \mathbf{e}_k)$, where $\mathcal{T}_n$ is the rectangular partial sum in \eqref{FGHVBNFJHERTWERDFSDFGSDFGSDFGSDFGDFGWERTWERT40}, we have the vector valued analog of \eqref{FGHVBNFJHERTWERDFSDFGSDFGSDFGSDFGDFGWERTWERT42}--\eqref{FGHVBNFJHERTWERDFSDFGSDFGSDFGSDFGDFGWERTWERT53}, stated next. \par \cole \begin{Lemma} \label{L01} For every $q\in (1,\infty)$, we have   \begin{equation}    \Vert \mathcal{T}_nf\Vert_{\mathbb{L}^{q}}\leq C_q\Vert f\Vert_{\mathbb{L}^{q}}    \comma n \in {\mathbb N}_0^{d}    \label{FGHVBNFJHERTWERDFSDFGSDFGSDFGSDFGDFGWERTWERT07}   \end{equation} and   \begin{equation}    \Vert \mathcal{T}_n f - f\Vert_{\mathbb{L}^{q}}\to 0    \quad \text{as~$\minn{n}\to\infty$}   ,    \label{FGHVBNFJHERTWERDFSDFGSDFGSDFGSDFGDFGWERTWERT28}   \end{equation} for $f\in {\mathbb L}^{q}$  with $x$-zero mean. \end{Lemma}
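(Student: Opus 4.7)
The plan is to deduce the vector-valued bound~\eqref{FGHVBNFJHERTWERDFSDFGSDFGSDFGSDFGDFGWERTWERT07} from the scalar inequality~\eqref{FGHVBNFJHERTWERDFSDFGSDFGSDFGSDFGDFGWERTWERT42} by a Khintchine randomization, and then to derive the convergence~\eqref{FGHVBNFJHERTWERDFSDFGSDFGSDFGSDFGDFGWERTWERT28} from~\eqref{FGHVBNFJHERTWERDFSDFGSDFGSDFGSDFGDFGWERTWERT07} by a standard density argument using trigonometric polynomials.

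For~\eqref{FGHVBNFJHERTWERDFSDFGSDFGSDFGSDFGDFGWERTWERT07}, I would write $f_k:=f\mathbf{e}_k$, so that $\Vert f(x)\Vert_{l^2(\mathcal H,\mathcal Y)}^2=\sum_k|f_k(x)|_{\mathcal Y}^2$. Letting $(\varepsilon_k)_{k\geq 1}$ be independent Rademacher variables on an auxiliary probability space with expectation $\mathbb E_\varepsilon$, Khintchine's inequality applied pointwise in $x$ and integrated yields, for each $q\in(1,\infty)$,
\begin{equation*}
  \Vert f\Vert_{\mathbb L^q}^q \asymp_q \mathbb E_\varepsilon\Bigl\Vert \sum_k\varepsilon_k f_k\Bigr\Vert_{L^q(\mathbb T^d;\mathcal Y)}^q,
\end{equation*}
and the same equivalence holds with $f$ replaced by $\mathcal T_n f=(\mathcal T_n f_k)_k$. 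Since $\mathcal T_n$ commutes with multiplication by the scalars $\varepsilon_k$, applying the scalar bound~\eqref{FGHVBNFJHERTWERDFSDFGSDFGSDFGSDFGDFGWERTWERT42} to $\sum_k\varepsilon_k f_k$ (componentwise when $\mathcal Y=\mathbb R^d$) and taking expectation in $\varepsilon$ delivers~\eqref{FGHVBNFJHERTWERDFSDFGSDFGSDFGSDFGDFGWERTWERT07}.

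For~\eqref{FGHVBNFJHERTWERDFSDFGSDFGSDFGSDFGDFGWERTWERT28}, fix $f\in\mathbb L^q$ with $x$-mean zero and $\epsilon>0$. I would first truncate to $f^N:=\sum_{k=1}^N f_k\mathbf e_k$, chosen so that $\Vert f-f^N\Vert_{\mathbb L^q}<\epsilon/2$ by dominated convergence in the $l^2$-sum; then approximate each $f_k$ by a mean-zero $\mathcal Y$-valued trigonometric polynomial $g_k$ so that $g:=\sum_{k=1}^N g_k\mathbf e_k$ satisfies $\Vert f^N-g\Vert_{\mathbb L^q}<\epsilon/2$. For $\minn n$ exceeding the maximal Fourier degree appearing in $g_1,\ldots,g_N$, we have $\mathcal T_n g=g$, so~\eqref{FGHVBNFJHERTWERDFSDFGSDFGSDFGSDFGDFGWERTWERT07} yields
\begin{equation*}
  \Vert\mathcal T_n f-f\Vert_{\mathbb L^q} \leq \Vert\mathcal T_n(f-g)\Vert_{\mathbb L^q}+\Vert g-f\Vert_{\mathbb L^q} \leq (C_q+1)\epsilon,
\end{equation*}
proving~\eqref{FGHVBNFJHERTWERDFSDFGSDFGSDFGSDFGDFGWERTWERT28}.

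The main subtlety is the first step: while the scalar bound from~\cite{G} reduces the rectangular partial sum to iterated one-dimensional Dirichlet projections via the Riesz theorem, the extension to $l^2(\mathcal H,\mathcal Y)$-valued functions is not automatic. The target being $l^2$ is crucial, as it permits the clean Khintchine reduction and avoids invoking the full vector-valued Calder\'on--Zygmund machinery.
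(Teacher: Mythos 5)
Your proof is correct, and the convergence part~\eqref{FGHVBNFJHERTWERDFSDFGSDFGSDFGSDFGDFGWERTWERT28} is essentially the paper's argument (the paper gets density of $l^2$-valued trigonometric polynomials in one step via the Fej\'er kernel, whereas you truncate first in $k$ and then in Fourier degree; both are fine). For the uniform bound~\eqref{FGHVBNFJHERTWERDFSDFGSDFGSDFGSDFGDFGWERTWERT07}, however, you take a genuinely different route. The paper reduces to $d=1$, applies the vector-valued Calder\'on--Zygmund theorem directly to the $l^2$-valued Riesz projection $H$ onto positive frequencies, observes the same bound for its modulations $H_n$, and factors $\mathcal{T}_n=(I-H_{-(n+1)})H_n$. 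You instead invoke the scalar bound~\eqref{FGHVBNFJHERTWERDFSDFGSDFGSDFGSDFGDFGWERTWERT42} and upgrade it to the $l^2(\mathcal H,\mathcal Y)$-valued setting by Khintchine randomization, i.e.\ the Marcinkiewicz--Zygmund theorem: since $\mathbb{E}_\varepsilon\Vert\sum_k\varepsilon_k f_k\Vert_{L^q}^q\asymp_q\Vert f\Vert_{\mathbb L^q}^q$ pointwise-in-$x$ by Khintchine and Fubini, and $\mathcal T_n$ commutes with the Rademacher signs, the scalar constant $C_q$ (uniform in $n$) transfers immediately. This buys modularity---the argument works verbatim for any $L^q$-bounded scalar operator and sidesteps vector-valued singular-integral theory---at the cost of treating only $l^2$-valued targets, which, as you note, is all that is needed here; the paper's route is the classical self-contained proof at the level of the Hilbert transform. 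The only point to tidy is that the randomized sum should be handled through finite truncations $\sum_{k\le N}\varepsilon_kf_k$ with a monotone-convergence passage $N\to\infty$, but this is routine and does not affect the validity of the argument.
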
 \colb \par \begin{proof}[Proof of Lemma~\ref{L01}] It is sufficient to prove \eqref{FGHVBNFJHERTWERDFSDFGSDFGSDFGSDFGDFGWERTWERT07} when $d=1$, in which case $n\in {\mathbb N}$. Consider the mapping   \begin{equation}    H \sum_{m\in {\mathbb Z}} \hat f(m)e^{2\pi i m\cdot x}    :=     \sum_{m\geq 1} \hat f(m)e^{2\pi i m\cdot x}    ,    \llabel{ 97kn Whp9sA Vz P o85 60p RN2 PS MGMM FK5X W52OnW Iy o Yng xWn o86 8S Kbbu 1Iq1 SyPkHJ VC v seV GWr hUd ew Xw6C SY1b e3hD9P Kh a 1y0 SRw yxi AG zdCM VMmi JaemmP 8x r bJX bKL DYE 1F pXUK ADtF 9ewhNe fd 2 XRu tTl 1HY JV p5cA hM1J fK7UIc pk d TbE ndM 6FW HA 72Pg LHzX lUo39o W9 0 BuD eJS lnV Rv z8VD V48t Id4Dtg FO O a47 LEH 8Qw nR GNBM 0RRU LluASz jx x wGI BHm Vyy Ld kGww 5eEg HFvsFU nz l 0vg OaQ DCV Ez 64r8 UvVH TtDykr Eu F aS3 5p5 yn6 QZ UcX3 mfET Exz1kv qE p OVV EFP IVp zQ lMOI Z2yT TxIUOm 0f W L1W oxC tlX Ws 9HU4 EF0I Z1WDv3 TP 4 2LN 7Tr SuR 8u Mv1t Lepv ZoeoKL xf 9 zMJ 6PU In1 S8 I4KY 13wJ TACh5X l8 O 5g0 ZGw Ddt u6 8wvr vnDC oqYjJ3 nF K WMA K8V OeG o4 DKxn EOyB wgmttc ES 8 dmT oAD 0YB Fl yGRB pBbo 8tQYBw bS X 2lc YnU 0fh At myR3 CKcU AQzzET Ng b ghH T64 KdO fL qFWu k07t DkzfQ1 dg B cw0 LSY lr7 9U 81QP qrdf H1tb8k Kn D l52 FhC j7T Xi P7GF C7HJ KfXgrP 4K O Og1 8BM 001 mJ PTpu bQr6 1JQu6o Gr 4 baj 60k zdX oD gAOX 2DBk LymrtN 6T 7 us2 Cp6 eZm 1a VJTY 8vYP OzMnsA qs 3 RL6 xHu mXN AB 5eXn ZRHa iECOaa MB w Ab1 5iF WGu cZ lU8J niDN KiPGWz q4 1 iBj 1kq bak ZF SvXq vSiR bLTriS y8 Q YOa mQU EQ16}   \end{equation} for $l^{2}$-valued functions $f$ such that $\hat f(0)=0$. Using the vector valued Calder\'on-Zygmund theorem, we have   \begin{equation}    \Vert H f\Vert_{\mathbb{L}^{q}}    \leq C_q    \Vert f\Vert_{\mathbb{L}^{q}}    ,    \llabel{ZhO rG HYHW guPB zlAhua o5 9 RKU trF 5Kb js KseT PXhU qRgnNA LV t aw4 YJB tK9 fN 7bN9 IEwK LTYGtn Cc c 2nf Mcx 7Vo Bt 1IC5 teMH X4g3JK 4J s deo Dl1 Xgb m9 xWDg Z31P chRS1R 8W 1 hap 5Rh 6Jj yT NXSC Uscx K4275D 72 g pRW xcf AbZ Y7 Apto 5SpT zO1dPA Vy Z JiW Clu OjO tE wxUB 7cTt EDqcAb YG d ZQZ fsQ 1At Hy xnPL 5K7D 91u03s 8K 2 0ro fZ9 w7T jx yG7q bCAh ssUZQu PK 7 xUe K7F 4HK fr CEPJ rgWH DZQpvR kO 8 Xve aSB OXS ee XV5j kgzL UTmMbo ma J fxu 8gA rnd zS IB0Y QSXv cZW8vo CO o OHy rEu GnS 2f nGEj jaLz ZIocQe gw H fSF KjW 2Lb KS nIcG 9Wnq Zya6qA YM S h2M mEA sw1 8n sJFY Anbr xZT45Z wB s BvK 9gS Ugy Bk 3dHq dvYU LhWgGK aM f Fk7 8mP 20m eV aQp2 NWIb 6hVBSe SV w nEq bq6 ucn X8 JLkI RJbJ EbwEYw nv L BgM 94G plc lu 2s3U m15E YAjs1G Ln h zG8 vmh ghs Qc EDE1 KnaH wtuxOg UD L BE5 9FL xIp vu KfJE UTQS EaZ6hu BC a KXr lni r1X mL KH3h VPrq ixmTkR zh 0 OGp Obo N6K LC E0Ga Udta nZ9Lvt 1K Z eN5 GQc LQL L0 P9GX uakH m6kqk7 qm X UVH 2bU Hga v0 Wp6Q 8JyI TzlpqW 0Y k 1fX 8gj Gci bR arme Si8l w03Win NX w 1gv vcD eDP Sa bsVw Zu4h aO1V2D qw k JoR Shj MBg ry glA9 3DBd S0mYAc El 5 aEd pII DT5 mb SVuX o8Nl Y24WCA 6dEQ23}   \end{equation} where $C_q$ is a constant, depending on $q$. Note that the same inequality is satisfied by the operators $    H_n \sum_{m\in {\mathbb Z}} \hat f(m)e^{2\pi i m\cdot x}    =     \sum_{m\leq n} \hat f(m)e^{2\pi i m\cdot x} $.  The proof of \eqref{FGHVBNFJHERTWERDFSDFGSDFGSDFGSDFGDFGWERTWERT07} is then concluded by observing that $   \mathcal{T}_n f    = (I-H_{-(n+1)})H_n  $, for all $n\in {\mathbb N}$. \par First note that $l^{2}$-valued trigonometric polynomials  ${\mathcal T}$ are dense in ${\mathbb L}^{q}$. This holds since the Fej\'er kernel constitutes a partition of unity, implying the convergence of the $l^2$-valued Fourier series in the mean. Assume that $f\in {\mathbb L}^{q}$, and let $\epsilon>0$. Then there exists $P\in {\mathcal T}$ such that $\Vert f-P\Vert_{{\mathbb L}^{q}}\leq \epsilon$, whence   \begin{align}    \begin{split}     \Vert f-\mathcal{T}_n f\Vert_{{\mathbb L}^{q}}     &\leq     \Vert f-P\Vert_{{\mathbb L}^{q}}     +     \Vert P- \mathcal{T}_n P\Vert_{{\mathbb L}^{q}}     +     \Vert \mathcal{T}_n P- \mathcal{T}_n f\Vert_{{\mathbb L}^{q}}     \leq     \epsilon     +     \Vert P- \mathcal{T}_n P\Vert_{{\mathbb L}^{q}}     +   C_q \epsilon     .    \end{split}    \llabel{ f CVF 6Al a6i Ns 7GCh OvFA hbxw9Q 71 Z RC8 yRi 1zZ dM rpt7 3dou ogkAkG GE 4 87V ii4 Ofw Je sXUR dzVL HU0zms 8W 2 Ztz iY5 mw9 aB ZIwk 5WNm vNM2Hd jn e wMR 8qp 2Vv up cV4P cjOG eu35u5 cQ X NTy kfT ZXA JH UnSs 4zxf Hwf10r it J Yox Rto 5OM FP hakR gzDY Pm02mG 18 v mfV 11N n87 zS X59D E0cN 99uEUz 2r T h1F P8x jrm q2 Z7ut pdRJ 2DdYkj y9 J Yko c38 Kdu Z9 vydO wkO0 djhXSx Sv H wJo XE7 9f8 qh iBr8 KYTx OfcYYF sM y j0H vK3 ayU wt 4nA5 H76b wUqyJQ od O u8U Gjb t6v lc xYZt 6AUx wpYr18 uO v 62v jnw FrC rf Z4nl vJuh 2SpVLO vp O lZn PTG 07V Re ixBm XBxO BzpFW5 iB I O7R Vmo GnJ u8 Axol YAxl JUrYKV Kk p aIk VCu PiD O8 IHPU ndze LPTILB P5 B qYy DLZ DZa db jcJA T644 Vp6byb 1g 4 dE7 Ydz keO YL hCRe Ommx F9zsu0 rp 8 Ajz d2v Heo 7L 5zVn L8IQ WnYATK KV 1 f14 s2J geC b3 v9UJ djNN VBINix 1q 5 oyr SBM 2Xt gr v8RQ MaXk a4AN9i Ni n zfH xGp A57 uA E4jM fg6S 6eNGKv JL 3 tyH 3qw dPr x2 jFXW 2Wih pSSxDr aA 7 PXg jK6 GGl Og 5PkR d2n5 3eEx4N yG h d8Z RkO NMQ qL q4sE RG0C ssQkdZ Ua O vWr pla BOW rS wSG1 SM8I z9qkpd v0 C RMs GcZ LAz 4G k70e O7k6 df4uYn R6 T 5Du KOT say 0D awWQ vn2U OOPNqQ T7 H 4Hf iKY Jcl Rq M2g9 lcQEQ29}   \end{align} Since, by $P\in {\mathcal T}$, the middle term vanishes for $n$ sufficiently large, and we obtain $\limsup_{n\to \infty}     \Vert f-\mathcal{T}_n f\Vert_{{\mathbb L}^{q}} \leq C_q \epsilon$. The proof of \eqref{FGHVBNFJHERTWERDFSDFGSDFGSDFGSDFGDFGWERTWERT28} is then concluded by sending $\epsilon\to 0$. \end{proof}
\par Lemma~\ref{L01} confirms that $\DFGDSFSDFGHSFGH_0^t \mathcal{T}_nf\,d\WW_t$ is an $\mathcal{T}_nL^{2}(\TT^d)$-valued Wiener process if $f\in \mathbb{L}^{2}$.  \par Let $A$ be an operator, for instance a differential operator, and let $\sigma$ and $g$ be $(l^2( \mathcal{H},\RR))^d$-valued operators. Given a cylindrical Wiener process $\WW$ relative to a prescribed stochastic basis $(\Omega, \mathcal{F},(\mathcal{F}_t)_{t\geq 0},\mathbb{P})$, we define the local strong solution for the $d$-dimensional stochastic evolution partial differential equation   \begin{equation}    \uu(t,x)=\uu_0(x)      +\DFGDSFSDFGHSFGH_0^t (A\uu(r,x)+\ff(r,x))\,dr      +\DFGDSFSDFGHSFGH_0^t \bigl(\sigma(\uu(r,x))+g(r,x)\bigr) \,d\WW(r)    \label{FGHVBNFJHERTWERDFSDFGSDFGSDFGSDFGDFGWERTWERT20}   \end{equation} in the following way. \par \begin{definition}[Local strong solution] \label{D01} {\rm A pair $(\uu,\tau)$ is called a local strong solution of \eqref{FGHVBNFJHERTWERDFSDFGSDFGSDFGSDFGDFGWERTWERT20} on $(\Omega, \mathcal{F},(\mathcal{F}_t)_{t\geq 0},\mathbb{P})$ if $\tau$ is a stopping time with $\PP(\tau>0)=1$ and if $u\in L^p(\Omega; C([0,\tau\wedge T], L^p)$ is progressively measurable and it satisfies   \begin{align}    \begin{split}      (\uu( t),\phi)      &= (\uu_0,\phi)           +\DFGDSFSDFGHSFGH_0^t (A\uu(r)+\ff(r),\phi)\,dr           +\DFGDSFSDFGHSFGH_0^t \bigl(\sigma(\uu(r))+g(r),\phi\bigr)\,d\WW(r)\quad \PP\mbox{-a.s.}    ,    \end{split}    \llabel{Z cvCNBP 2B b tjv VYj ojr rh 78tW R886 ANdxeA SV P hK3 uPr QRs 6O SW1B wWM0 yNG9iB RI 7 opG CXk hZp Eo 2JNt kyYO pCY9HL 3o 7 Zu0 J9F Tz6 tZ GLn8 HAes o9umpy uc s 4l3 CA6 DCQ 0m 0llF Pbc8 z5Ad2l GN w SgA XeN HTN pw dS6e 3ila 2tlbXN 7c 1 itX aDZ Fak df Jkz7 TzaO 4kbVhn YH f Tda 9C3 WCb tw MXHW xoCC c4Ws2C UH B sNL FEf jS4 SG I4I4 hqHh 2nCaQ4 nM p nzY oYE 5fD sX hCHJ zTQO cbKmvE pl W Und VUo rrq iJ zRqT dIWS QBL96D FU d 64k 5gv Qh0 dj rGlw 795x V6KzhT l5 Y FtC rpy bHH 86 h3qn Lyzy ycGoqm Cb f h9h prB CQp Fe CxhU Z2oJ F3aKgQ H8 R yIm F9t Eks gP FMMJ TAIy z3ohWj Hx M R86 KJO NKT c3 uyRN nSKH lhb11Q 9C w rf8 iiX qyY L4 zh9s 8NTE ve539G zL g vhD N7F eXo 5k AWAT 6Vrw htDQwy tu H Oa5 UIO Exb Mp V2AH puuC HWItfO ru x YfF qsa P8u fH F16C EBXK tj6ohs uv T 8BB PDN gGf KQ g6MB K2x9 jqRbHm jI U EKB Im0 bbK ac wqIX ijrF uq9906 Vy m 3Ve 1gB dMy 9i hnbA 3gBo 5aBKK5 gf J SmN eCW wOM t9 xutz wDkX IY7nNh Wd D ppZ UOq 2Ae 0a W7A6 XoIc TSLNDZ yf 2 XjB cUw eQT Zt cuXI DYsD hdAu3V MB B BKW IcF NWQ dO u3Fb c6F8 VN77Da IH E 3MZ luL YvB mN Z2wE auXX DGpeKR nw o UVB 2oM VVe hW 0ejG gbgz Iw9FwQ hN Y rFI 4pT lqr EQ21}   \end{align} } for all $\phi\in C^{\infty}(\TT^d)$ and all $t\in [0,\tau\wedge T]$. \end{definition} \par \colb If $A$ is a differential operator, we interpret $(A u(r),\phi)$ using integration by parts. \par \begin{definition}[Pathwise uniqueness] 	\label{D02} 	{\rm The equation \eqref{FGHVBNFJHERTWERDFSDFGSDFGSDFGSDFGDFGWERTWERT20} is said to have a pathwise unique strong solution if for any pair of local strong solutions $(\uu,\tau)$ and $(v, \eta)$ subject to $(\Omega, \mathcal{F},(\mathcal{F}_t)_{t\geq 0},\mathbb{P})$ and the same Wiener process $\WW$, we have $ \PP(\uu( t)=v(t),~\forall t \in[0, \tau\wedge \eta])=1 $. 	} \end{definition} \par \startnewsection{Assumptions and main results}{sec3} In this section, we summarize the assumptions and state the main result. We assume throughout that  $d=3$ and $p>3$.  We point out however that the results extend easily to any space dimension $d\geq2$ with the condition $p>d$. On the noise coefficient $\sigma$ in \eqref{FGHVBNFJHERTWERDFSDFGSDFGSDFGSDFGDFGWERTWERT01}, we assume \begin{enumerate} 	\item (sub-linear growth) 	\begin{align} 	\begin{split} 	\sum_{i=1}^3 \Vert \sigma_i(u)\Vert_{\mathbb{L}^{r}} 	\leq C (\Vert u\Vert_{r} + 1)         \comma r\in\{2,p, 3p\}         , 	\end{split} 	\label{FGHVBNFJHERTWERDFSDFGSDFGSDFGSDFGDFGWERTWERT24} 	\end{align} \item (Lipschitz continuity)  	\begin{align} 	\begin{split} 	\sum_{i=1}^3 \Vert \sigma_i(u)-\sigma_i(v)\Vert_{\mathbb{L}^{r}} 	\leq  	C \Vert u-v\Vert_{r} 	\comma  r\in\{2,p\}, 	\end{split} 	\label{FGHVBNFJHERTWERDFSDFGSDFGSDFGSDFGDFGWERTWERT25} 	\end{align} \item (preserved divergence and mean) $\sigma(W_{\rm sol}^{s,p})\subset \mathbb{W}_{\rm sol}^{s,p}$, and $\DFGDSFSDFGHSFGH_{\TT^3} \sigma(u)=0$ if $\DFGDSFSDFGHSFGH_{\TT^3} u=0$, and \item (sublinearity of the gradient) 	\begin{align} 	\begin{split} 	\sum_{i=1}^3 \Vert\nabla(\sigma_i(u))\Vert_{\mathbb{L}^{2}} 	\leq C \Vert u\Vert_{2}. 	\end{split} 	\label{FGHVBNFJHERTWERDFSDFGSDFGSDFGSDFGDFGWERTWERT22} 	\end{align} \end{enumerate} \par Under these conditions, we have the following statement. \cole\begin{Theorem} \label{T04} (Local strong solution up to a stopping time) Let $p> 3$ and $\uu_0\in L^p(\Omega; L^p(\TT^3))$. Then there exists a pathwise unique local strong solution  $(u,\tau)$ to \eqref{FGHVBNFJHERTWERDFSDFGSDFGSDFGSDFGDFGWERTWERT01}--\eqref{FGHVBNFJHERTWERDFSDFGSDFGSDFGSDFGDFGWERTWERT03} such that   \begin{align}   \begin{split}     \EE\left[      \sup_{0\leq s\leq \tau}\Vert\uu(s,\cdot)\Vert_p^p          +\DFGDSFSDFGHSFGH_0^{\tau}                   \sum_{j}    \DFGDSFSDFGHSFGH_{\TT^3} | \nabla (|\uu_j(s,x)|^{p/2})|^2 \,dx\,ds        \right]          \leq           C\EE\bigl[\Vert\uu_0\Vert_p^p                        +1              \bigr]    ,   \end{split}    \llabel{Wn Xzz2 qBba lv3snl 2j a vzU Snc pwh cG J0Di 3Lr3 rs6F23 6o b LtD vN9 KqA pO uold 3sec xqgSQN ZN f w5t BGX Pdv W0 k6G4 Byh9 V3IicO nR 2 obf x3j rwt 37 u82f wxwj SmOQq0 pq 4 qfv rN4 kFW hP HRmy lxBx 1zCUhs DN Y INv Ldt VDG 35 kTMT 0ChP EdjSG4 rW N 6v5 IIM TVB 5y cWuY OoU6 Sevyec OT f ZJv BjS ZZk M6 8vq4 NOpj X0oQ7r vM v myK ftb ioR l5 c4ID 72iF H0VbQz hj H U5Z 9EV MX8 1P GJss Wedm hBXKDA iq w UJV Gj2 rIS 92 AntB n1QP R3tTJr Z1 e lVo iKU stz A8 fCCg Mwfw 4jKbDb er B Rt6 T8O Zyn NO qXc5 3Pgf LK9oKe 1p P rYB BZY uui Cw XzA6 kaGb twGpmR Tm K viw HEz Rjh Te frip vLAX k3PkLN Dg 5 odc omQ j9L YI VawV mLpK rto0F6 Ns 7 Mmk cTL 9Tr 8f OT4u NNJv ZThOQw CO C RBH RTx hSB Na Iizz bKIB EcWSMY Eh D kRt PWG KtU mo 26ac LbBn I4t2P1 1e R iPP 99n j4q Q3 62UN AQaH JPPY1O gL h N8s ta9 eJz Pg mE4z QgB0 mlAWBa 4E m u7m nfY gbN Lz ddGp hhJV 9hyAOG CN j xJ8 3Hg 6CA UT nusW 9pQr Wv1DfV lG n WxM Bbe 9Ww Lt OdwD ERml xJ8LTq KW T tsR 0cD XAf hR X1zX lAUu wzqnO2 o7 r toi SMr OKL Cq joq1 tUGG iIxusp oi i tja NRn gtx S0 r98r wXF7 GNiepz Ef A O2s Ykt Idg H1 AGcR rd2w 89xoOK yN n LaL RU0 3su U3 JbS8 dok8 tw9NQS Y4 j XEQ26}   \end{align} where $C>0$ is a constant depending on $p$. \end{Theorem}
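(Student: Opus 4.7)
The plan is to construct the solution as a limit of Galerkin-type approximations built from the rectangular Fourier partial sums $\mathcal{T}_n$ of Section~\ref{sec2}, relying on the $\mathbb{L}^p$-continuity of $\mathcal{T}_n$ from Lemma~\ref{L01} and the quantitative difference estimate of Lemma~\ref{T02} to pass to the limit in the nonlinear and stochastic terms. First, I would introduce the approximating system
\[
d\un = \Delta \un \, dt - \mathcal{P}\mathcal{T}_n\bigl((\un\cdot\nabla)\un\bigr)\,dt + \mathcal{T}_n \sigma(\un)\,d\WW, \qquad \un(0) = \mathcal{T}_n \uu_0,
\]
and construct $\un$ first in $H^1(\TT^3)$, since the rectangular Fourier scheme does not come with a ready-made $L^p$ existence theory. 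As sketched in the introduction, I would reduce this to an additive-noise equation by a fixed-point argument, splitting off the linear stochastic Stokes part and iterating the deterministic auxiliary equation until a progressively measurable $\un\in C([0,T];H^1)$ is produced.

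Next I would turn to the $L^p$ analysis. Applying the It\^o formula componentwise to $\Vert\un_j\Vert_p^p$ yields the dissipative contribution $-c\int\!|\nabla(|\un_j|^{p/2})|^2\,dx$, the convective term coming from $\mathcal{P}\mathcal{T}_n((\un\cdot\nabla)\un)$, and the noise plus quadratic-variation terms, which are controlled by the sub-linear growth hypothesis~\eqref{FGHVBNFJHERTWERDFSDFGSDFGSDFGSDFGDFGWERTWERT24} together with the auxiliary $L^p$ estimate borrowed from~\cite{KXZ}. To linearize the convection I would introduce the stopping time $\tau_R^{(n)} = \inf\{t\colon \Vert\un(t)\Vert_p \geq R\}$, after which H\"older's inequality combined with a Sobolev embedding lets one absorb the convective contribution into the dissipation; this is the step where $p>3$ is used essentially. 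The BDG inequality and Gronwall then close the estimate, giving a bound of the type stated in the theorem, uniformly in $n$, on $[0,\tau_R^{(n)}\wedge T]$.

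The main obstacle is that the stopping times $\tau_R^{(n)}$ may degenerate to zero on sets of positive probability, which would prevent a direct passage to a limit on a common deterministic interval. Following the strategy announced in the introduction, I would work on a monotonically expanding family of probability subspaces $\Omega_N \uparrow \Omega$ chosen so that $\inf_n \tau_R^{(n)}$ is bounded below by a positive random variable $\tau_R$ on each $\Omega_N$. On each $\Omega_N$, I would extract a pathwise limit $\uu$ of $\un$ in $C([0,\tau_R];L^p)\cap L^2([0,\tau_R];H^1)$ along a subsequence: Lemma~\ref{L01} supplies $\mathcal{T}_n\uu\to\uu$ in $\mathbb{L}^p$, while Lemma~\ref{T02} provides the $\minn{n}^{-\alpha}$ gain needed to control $(\mathcal{T}_n-\mathcal{T}_m)$ inside the quadratic convective term and the noise, where no Hilbert orthogonality is available. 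Gluing the partial limits through the indicators $\indic_{\Omega_N}$ and sending $N\to\infty$ then yields a local strong solution $(\uu,\tau)$ defined $\PP$-a.s., and the uniform estimates transfer by Fatou to give the stated energy inequality.

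Finally, for pathwise uniqueness I would take two local strong solutions $(\uu,\tau)$ and $(\vv,\eta)$ driven by the same $\WW$, set $w = \uu-\vv$, and introduce a further stopping time $\tau_R$ bounding $\Vert\uu\Vert_p\vee\Vert\vv\Vert_p$ by $R$. Applying It\^o to $\Vert w\Vert_2^2$ on $[0,\tau\wedge\eta\wedge\tau_R]$, splitting the convective difference as $(w\cdot\nabla)\uu+(\vv\cdot\nabla)w$, and using the Lipschitz hypothesis~\eqref{FGHVBNFJHERTWERDFSDFGSDFGSDFGSDFGDFGWERTWERT25} to bound the noise difference in $L^2$, one closes a Gronwall argument in which $\Vert\nabla w\Vert_2^2$ is absorbed by the dissipation (using again $p>3$ and Sobolev embedding to pair $L^p$ against $H^1$); letting $R\to\infty$ then gives $w\equiv 0$ on $[0,\tau\wedge\eta]$.
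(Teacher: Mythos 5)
Your overall scheme --- rectangular Fourier truncations, an $H^{1}$/$L^{2}$ construction of the approximations via an iteration that renders the noise additive at each step, $L^{p}$ estimates localized by stopping times, expanding probability subspaces to handle possibly degenerating stopping times, and the quantitative estimate of Lemma~\ref{T02} to make the sequence Cauchy --- is the same as the paper's. Two points, however, are genuine gaps rather than omitted routine details.

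First, your pathwise uniqueness argument runs the energy estimate on $\Vert w\Vert_2^2$, which requires $\nabla w\in L^2_{t,x}$ in order to produce and exploit the dissipation $\Vert \nabla w\Vert_2^2$ and to make sense of the integrated-by-parts trilinear term $\int (w\cdot\nabla)w\cdot v^{(2)}$. A local strong solution in the sense of Definition~\ref{D01} lies only in $C([0,\tau],L^p)$, and the available dissipation \eqref{eng} controls $\int |v_j|^{p-2}|\nabla v_j|^2$, which for $p>2$ gives no bound on $\nabla v_j$ where $v_j$ is small; so membership of the competitor solution in $L^2_t H^1_x$ is not known. The paper instead proves uniqueness (Lemma~\ref{L06}) by running the contraction in $\Vert w\Vert_p^p$ with the $|\nabla(|w_j|^{p/2})|^2$ dissipation, i.e., with exactly the quantities controlled by \eqref{eng} and Lemma~\ref{T01}; you would need either to adopt that route or to first upgrade both solutions to the Leray class.

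Second, your stopping time $\tau_R^{(n)}=\inf\{t:\Vert \un(t)\Vert_p\geq R\}$ with a deterministic $R$ is strictly positive only on the event $\{\Vert u_0\Vert_p<R\}$, so the uniform-in-$n$ estimates and the Borel--Cantelli selection cannot be carried out for general $u_0\in L^p(\Omega;L^p)$ with a single $R$. The paper first assumes $\Vert u_0\Vert_p\leq K$ a.s.\ (so that $M$ can be chosen deterministically above $2\sup_n\Vert \mathcal{S}_n u_0\Vert_p$) and only at the end removes this restriction by solving on each event $\{k\leq \Vert u_0\Vert_p<k+1\}$ separately and gluing the resulting pairs $(u_{(k)},\tau_k)$ with indicator functions; some such decomposition is needed in your argument as well. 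A more minor point: the stopping time \eqref{FGHVBNFJHERTWERDFSDFGSDFGSDFGSDFGDFGWERTWERT64} also caps $\bigl(\int_0^t\Vert \un\Vert_{3p}^p\,ds\bigr)^{1/p}$, which guarantees finiteness of the right-hand side before the $\varepsilon\Vert \un\Vert_{3p}^p$ contribution of the convective term is absorbed into the dissipation via \eqref{GN}, and which is the norm in which the Cauchy property and the almost sure convergence are actually established.
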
 \colb \par For the sake of completeness, we state two auxiliary results that were proven in~\cite{KXZ} (cf.~Theorem~4.1 and~Lemma~4.4 in~\cite{KXZ}). \par \cole \begin{Lemma} \label{T01} Let $2<p<\infty$ and $0<T<\infty$. Suppose that $\uu_0\in L^p(\Omega, L^p({\mathbb T}^{d}))$,  $\ff\in L^p(\Omega\times[0,T], W^{-1,q}({\mathbb T}^{d}))$,  and $g\in L^p(\Omega\times[0,T], \mathbb{L}^p({\mathbb T}^{d}))$ are $\mathbb{R}^{D}$-valued with $x$-mean zero $(\omega,t)$~a.s., and   \begin{equation}    \frac{\dd p}{p+\dd-2}    < q    \leq p     ,    \llabel{Y6 25K CcP Ly FRlS p759 DeVbY5 b6 9 jYO mdf b99 j1 5lvL vjsk K2gEwl Rx O tWL ytZ J1y Z5 Pit3 5SOi ivz4F8 tq M JIg QQi Oob Sp eprt 2vBV qhvzkL lf 7 HXA 4so MXj Wd MS7L eRDi ktUifL JH u kes trv rl7 mY cSOB 7nKW MD0xBq kb x FgT TNI wey VI G6Uy 3dL0 C3MzFx sB E 7zU hSe tBQ cX 7jn2 2rr0 yL1Erb pL R m3i da5 MdP ic dnMO iZCy Gd2MdK Ub x saI 9Tt nHX qA QBju N5I4 Q6zz4d SW Y Urh xTC uBg BU T992 uczE mkqK1o uC a HJB R0Q nv1 ar tFie kBu4 9ND9kK 9e K BOg PGz qfK J6 7NsK z3By wIwYxE oW Y f6A Kuy VPj 8B 9D6q uBkF CsKHUD Ck s DYK 3vs 0Ep 3g M2Ew lPGj RVX6cx lb V OfA ll7 g6y L9 PWyo 58h0 e07HO0 qz 8 kbe 85Z BVC YO KxNN La4a FZ7mw7 mo A CU1 q1l pfm E5 qXTA 0QqV MnRsbK zH o 5vX 1tp MVZ XC znmS OM73 CRHwQP Tl v VN7 lKX I06 KT 6MTj O3Yb 87pgoz ox y dVJ HPL 3k2 KR yx3b 0yPB sJmNjE TP J i4k m2f xMh 35 MtRo irNE 9bU7lM o4 b nj9 GgY A6v sE sONR tNmD FJej96 ST n 3lJ U2u 16o TE Xogv Mqwh D0BKr1 Ci s VYb A2w kfX 0n 4hD5 Lbr8 l7Erfu N8 O cUj qeq zCC yx 6hPA yMrL eB8Cwl kT h ixd Izv iEW uw I8qK a0VZ EqOroD UP G phf IOF SKZ 3i cda7 Vh3y wUSzkk W8 S fU1 yHN 0A1 4z nyPU Ll6h pzlkq7 SK N aFq g9Y hj2 hJ 3pWS mi9X gjEQ32}   \end{equation} provided $\dd\geq2$, or $1 <q \leq p$ if $d=1$. Then there exists a unique maximal solution $\uu\in L^p(\Omega; C([0,T], L^p))$ to 
 \begin{align} d\uu( t,x) &=\Delta \uu( t,x)\,dt +\ff( t,x)\,dt+g( t,x)\,d\WW(t), \llabel{apmM Z6 H V8y jig pSN lI 9T8e Lhc1 eRRgZ8 85 e NJ8 w3s ecl 5i lCdo zV1B oOIk9g DZ N Y5q gVQ cFe TD VxhP mwPh EU41Lq 35 g CzP tc2 oPu gV KOp5 Gsf7 DFBlek to b d2y uDt ElX xm j1us DJJ6 hj0HBV Fa n Tva bFA VwM 51 nUH6 0GvT 9fAjTO 4M Q VzN NAQ iwS lS xf2p Q8qv tdjnvu pL A TIw ym4 nEY ES fMav UgZo yehtoe 9R T N15 EI1 aKJ SC nr4M jiYh B0A7vn SA Y nZ1 cXO I1V 7y ja0R 9jCT wxMUiM I5 l 2sT XnN RnV i1 KczL G3Mg JoEktl Ko U 13t saq jrH YV zfb1 yyxu npbRA5 6b r W45 Iqh fKo 0z j04I cGrH irwyH2 tJ b Fr3 leR dcp st vXe2 yJle kGVFCe 2a D 4XP OuI mtV oa zCKO 3uRI m2KFjt m5 R GWC vko zi7 5Y WNsb hORn xzRzw9 9T r Fhj hKb fqL Ab e2v5 n9mD 2VpNzl Mn n toi FZB 2Zj XB hhsK 8K6c GiSbRk kw f WeY JXd RBB xy qjEV F5lr 3dFrxG lT c sby AEN cqA 98 1IQ4 UGpB k0gBeJ 6D n 9Jh kne 5f5 18 umOu LnIa spzcRf oC 0 StS y0D F8N Nz F2Up PtNG 50tqKT k2 e 51y Ubr szn Qb eIui Y5qa SGjcXi El 4 5B5 Pny Qtn UO MHis kTC2 KsWkjh a6 l oMf gZK G3n Hp h0gn NQ7q 0QxsQk gQ w Kwy hfP 5qF Ww NaHx SKTA 63ClhG Bg a ruj HnG Kf4 6F QtVt SPgE gTeY6f JG m B3q gXx tR8 RT CPB1 8kQa jtt6GD rK b 1VY LV3 RgW Ir AyZf 69V8 VM7jHO b7 z Lva XTT VI0 ON KEQ30} \\ \uu( 0,x)&= \uu_0 ( x) \quad{}\mbox{a.s.}\comma x\in\mathbb{T}^d . \llabel{MBA HOwO Z7dPky Cg U S74 Hln FZM Ha br8m lHbQ NSwwdo mO L 6q5 wvR exV ej vVHk CEdX m3cU54 ju Z SKn g8w cj6 hR 1FnZ Jbkm gKXJgF m5 q Z5S ubX vPK DB OCGf 4srh 1a5FL0 vY f RjJ wUm 2sf Co gRha bxyc 0Rgava Rb k jzl teR GEx bE MMhL Zbh3 axosCq u7 k Z1P t6Y 8zJ Xt vmvP vAr3 LSWDjb VP N 7eN u20 r8B w2 ivnk zMda 93zWWi UB H wQz ahU iji 2T rXI8 v2HN ShbTKL eK W 83W rQK O4T Zm 57yz oVYZ JytSg2 Wx 4 Yaf THA xS7 ka cIPQ JGYd Dk0531 u2 Q IKf REW YcM KM UT7f dT9E kIfUJ3 pM W 59Q LFm u02 YH Jaa2 Er6K SIwTBG DJ Y Zwv fSJ Qby 7f dFWd fT9z U27ws5 oU 5 MUT DJz KFN oj dXRy BaYy bTvnhh 2d V 77o FFl t4H 0R NZjV J5BJ pyIqAO WW c efd R27 nGk jm oEFH janX f1ONEc yt o INt D90 ONa nd awDR Ki2D JzAqYH GC T B0p zdB a3O ot Pq1Q VFva YNTVz2 sZ J 6ey Ig2 N7P gi lKLF 9Nzc rhuLeC eX w b6c MFE xfl JS E8Ev 9WHg Q1Brp7 RO M ACw vAn ATq GZ Hwkd HA5f bABXo6 EW H soW 6HQ Yvv jc ZgRk OWAb VA0zBf Ba W wlI V05 Z6E 2J QjOe HcZG Juq90a c5 J h9h 0rL KfI Ht l8tP rtRd qql8TZ GU g dNy SBH oNr QC sxtg zuGA wHvyNx pM m wKQ uJF Kjt Zr 6Y4H dmrC bnF52g A0 3 28a Vuz Ebp lX Zd7E JEEC 939HQt ha M sup Tcx VaZ 32 pPdb PIj2 x8Azxj YX S q8L sEQ31} \end{align} Moreover,  \begin{align} \begin{split} &\EE\left[\sup_{0\leq t\leq T}\Vert\uu(t,\cdot)\Vert_p^p +\sum_{j=1}^{D}\DFGDSFSDFGHSFGH_0^{T} \DFGDSFSDFGHSFGH_{\TT^\dd} | \nabla (|\uu_j(t,x)|^{p/2})|^2 \,dx dt\right] \\&\indeq \leq C \EE\left[ \Vert\uu_0\Vert_p^p +\DFGDSFSDFGHSFGH_0^{T}\Vert \ff(t,\cdot)\Vert_{-1,q}^p\,dt +\sum_{j=1}^D\DFGDSFSDFGHSFGH_0^{T}\DFGDSFSDFGHSFGH_{\TT^\dd}\Vert g_j(t,x)\Vert_{l^2( \mathcal{H},\mathcal{\RR})}^p\,dx\,dt \right] , \end{split} \label{FGHVBNFJHERTWERDFSDFGSDFGSDFGSDFGDFGWERTWERT39} \end{align} where $C>0$ depends on $T$ and $p$. \end{Lemma}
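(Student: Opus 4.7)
\textbf{Proof proposal for Lemma~\ref{T01}.} The plan is to first construct the solution by standard linear stochastic evolution theory in a Hilbert framework, then derive the $L^{p}$ estimate \eqref{FGHVBNFJHERTWERDFSDFGSDFGSDFGSDFGDFGWERTWERT39} by applying an It\^o formula to $\|\uu_j\|_p^p$ componentwise. Since the equation is linear in $\uu$ with forcing $\ff,g$ that are independent of $\uu$, existence of a progressively measurable solution $\uu\in L^2(\Omega; C([0,T],L^2)\cap L^2([0,T],H^1))$ follows from the classical variational theory (Lions/Rozovsky/Da~Prato--Zabczyk). Uniqueness for \emph{this} lemma is then immediate from linearity: the difference of two solutions solves the same equation with zero data, and the energy estimate below with vanishing right-hand side forces it to be zero $\PP$-a.s. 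To justify the It\^o computation rigorously, one should first smooth the data $\uu_0,\ff,g$ (e.g.\ by $\mathcal{T}_n$ and mollification in time), establish the estimate for smooth data, and then pass to the limit using lower semicontinuity on the left-hand side and dominated convergence on the right.

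For smooth data, apply It\^o's formula to $\varphi(\uu_j)=\|\uu_j(t)\|_p^p$ for each component $j$. The deterministic part contributes
\[
  p\DFGDSFSDFGHSFGH |\uu_j|^{p-2}\uu_j\,\Delta \uu_j\,dx
  + p\langle \ff_j,\, |\uu_j|^{p-2}\uu_j\rangle,
\]
while the It\^o correction from $g_j\,dW$ contributes $\tfrac{p(p-1)}{2}\int|\uu_j|^{p-2}\|g_j\|_{l^2(\mathcal{H},\mathbb{R})}^2\,dx$, and the martingale part is $p\int |\uu_j|^{p-2}\uu_j\,g_j\,dx\cdot d\WW$. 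The Laplacian term, after integration by parts and the chain-rule identity $\nabla(|\uu_j|^{p/2})=\tfrac{p}{2}|\uu_j|^{(p-2)/2}\mathrm{sgn}(\uu_j)\nabla\uu_j$, yields the good coercive term $-\tfrac{4(p-1)}{p}\int |\nabla(|\uu_j|^{p/2})|^2\,dx$, which is exactly the square-function quantity appearing on the left of \eqref{FGHVBNFJHERTWERDFSDFGSDFGSDFGSDFGDFGWERTWERT39}.

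The main obstacle is handling $\langle \ff_j, |\uu_j|^{p-2}\uu_j\rangle$ using only $W^{-1,q}$ regularity of $\ff$; this is where the condition $q>\tfrac{dp}{p+d-2}$ enters. By $W^{-1,q}$--$W^{1,q'}$ duality,
\[
  |\langle \ff_j,|\uu_j|^{p-2}\uu_j\rangle|
  \leq \|\ff_j\|_{-1,q}\,\|\nabla(|\uu_j|^{p-2}\uu_j)\|_{q'}
  \leq C \|\ff_j\|_{-1,q}\,\||\uu_j|^{(p-2)/2}\|_{r_1}\,\|\nabla(|\uu_j|^{p/2})\|_2,
\]
after writing $\nabla(|\uu_j|^{p-2}\uu_j)=c|\uu_j|^{(p-2)/2}\nabla(|\uu_j|^{p/2})$ and applying H\"older with $\tfrac{1}{r_1}+\tfrac12=\tfrac{1}{q'}$. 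The factor $\||\uu_j|^{(p-2)/2}\|_{r_1}=\|\uu_j\|_{r_1(p-2)/2}^{(p-2)/2}$ is controlled by interpolating between $\|\uu_j\|_p$ and the Sobolev embedding $\||\uu_j|^{p/2}\|_{2d/(d-2)}\leq C\|\nabla(|\uu_j|^{p/2})\|_2$; a direct check shows this interpolation succeeds precisely under $q>dp/(p+d-2)$. Young's inequality then lets us absorb $\|\nabla(|\uu_j|^{p/2})\|_2^2$ into the coercive term, leaving a remainder of the form $C\|\ff_j\|_{-1,q}^{p}+C\|\uu_j\|_p^p$.

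Finally, the It\^o correction is bounded by H\"older as $\int|\uu_j|^{p-2}\|g_j\|_{l^2}^2\,dx\leq \|\uu_j\|_p^{p-2}\|g_j\|_{\mathbb{L}^p}^2 \leq C\|\uu_j\|_p^p + C\|g_j\|_{\mathbb{L}^p}^p$, and the supremum of the stochastic term is handled by the BDG inequality
\[
  \EE\sup_{t\leq T}\Bigl|\DFGDSFSDFGHSFGH_0^t p\DFGDSFSDFGHSFGH|\uu_j|^{p-2}\uu_j g_j\,dx\,d\WW\Bigr|
  \leq C\EE\Bigl(\DFGDSFSDFGHSFGH_0^T \|\uu_j\|_p^{2(p-1)}\|g_j\|_{\mathbb{L}^p}^{2}\,dt\Bigr)^{1/2},
\]
which after a further H\"older and Young splits into $\tfrac12\EE\sup_t\|\uu_j\|_p^p$ (absorbed on the left) plus $C\EE\int_0^T\!\|g_j\|_{\mathbb{L}^p}^p\,dt$ and $C\EE\int_0^T\!\|\uu_j\|_p^p\,dt$. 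Summing over $j$, taking suprema, and applying Gronwall yield \eqref{FGHVBNFJHERTWERDFSDFGSDFGSDFGSDFGDFGWERTWERT39}; the $x$-mean-zero assumption on $\uu_0,\ff,g$ is preserved by the flow and is used implicitly in the Poincar\'e-type Sobolev step above.
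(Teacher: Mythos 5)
The paper does not actually prove Lemma~\ref{T01}: it is quoted from \cite{KXZ} (Theorem~4.1 and Lemma~4.4 there), where it is established using the $L^p$-It\^o calculus of \cite{R}. So there is no in-paper argument to compare against line by line; what can be said is that your sketch follows the same route as the cited proof. The skeleton --- construct the solution of the linear additive-noise equation, apply It\^o's formula to $\Vert u_j\Vert_p^p$ to produce the coercive term $-\tfrac{4(p-1)}{p}\int|\nabla(|u_j|^{p/2})|^2\,dx$ (your constant is the right one; it reappears in the paper's own trajectory It\^o expansion later), estimate the forcing by $W^{-1,q}$--$W^{1,q'}$ duality, and close with interpolation, the BDG inequality, and Gronwall --- is the standard one. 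Your identification of where $q>dp/(p+d-2)$ enters is also correct: it is exactly the condition $r_1(p-2)/2\le pd/(d-2)$ needed so that $\Vert\,|u_j|^{(p-2)/2}\Vert_{r_1}$ can be interpolated between $\Vert u_j\Vert_p$ and the Sobolev embedding of $|u_j|^{p/2}\in H^1$ with a total power of $\Vert\nabla(|u_j|^{p/2})\Vert_2$ strictly below $2$.

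Two points deserve more care than your sketch gives them. First, the It\^o formula for $t\mapsto\Vert u_j(t)\Vert_p^p$ when $u_j$ is only an $L^p$-valued semimartingale is not the finite-dimensional It\^o formula; making it rigorous is the main technical content of the cited proof and is precisely why \cite{R} is invoked. Your proposed regularize-then-pass-to-the-limit scheme is workable, but it constitutes most of the work, and the ``lower semicontinuity on the left'' for the gradient term is exactly the role of Lemma~\ref{L02} in the present paper --- it should be cited rather than waved at. Second, the duality step requires the full $W^{1,q'}$ norm of $|u_j|^{p-2}u_j$, not only its gradient: you must also control $\Vert\,|u_j|^{p-1}\Vert_{q'}$, and this does not follow from a Poincar\'e inequality because $|u_j|^{p-2}u_j$ has no reason to have zero mean even when $u_j$ does; it is instead handled by the same interpolation as the gradient factor. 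Neither issue is a wrong turn, but both must be filled in for a complete proof.
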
 \colb \par The following lemma is essential when passing to the limit in $\Vert |\un|^{p/2}\Vert_{H^1}$. \par \cole \begin{Lemma}\label{L02} Let $p\geq 2$. If    \begin{equation}    \uu_{n}\to\uu \text{~in~}  L^p(\Omega; L^{\infty}([0,T], L^p(\TT^d)))) \text{~as~} n\to \infty       \llabel{of qmg Sq jm8G 4wUb Q28LuA ab w I0c FWN fGn zp VzsU eHsL 9zoBLl g5 j XQX nR0 giR mC LErq lDIP YeYXdu UJ E 0Bs bkK bjp dc PLie k8NW rIjsfa pH h 4GY vMF bA6 7q yex7 sHgH G3GlW0 y1 W D35 mIo 5gE Ub Obrb knjg UQyko7 g2 y rEO fov QfA k6 UVDH Gl7G V3LvQm ra d EUO Jpu uzt BB nrme filt 1sGSf5 O0 a w2D c0h RaH Ga lEqI pfgP yNQoLH p2 L AIU p77 Fyg rj C8qB buxB kYX8NT mU v yT7 YnB gv5 K7 vq5N efB5 ye4TMu Cf m E2J F7h gqw I7 dmNx 2CqZ uLFthz Il B 1sj KA8 WGD Kc DKva bk9y p28TFP 0r g 0iA 9CB D36 c8 HLkZ nO2S 6Zoafv LX b 8go pYa 085 EM RbAb QjGt urIXlT E0 G z0t YSV Use Cj DvrQ 2bvf iIJCdf CA c WyI O7m lyc s5 Rjio IZt7 qyB7pL 9p y G8X DTz JxH s0 yhVV Ar8Z QRqsZC HH A DFT wvJ HeH OG vLJH uTfN a5j12Z kT v GqO yS8 826 D2 rj7r HDTL N7Ggmt 9M z cyg wxn j4J Je Qb7e MmwR nSuZLU 8q U NDL rdg C70 bh EPgp b7zk 5a32N1 Ib J hf8 XvG RmU Fd vIUk wPFb idJPLl NG e 1RQ RsK 2dV NP M7A3 Yhdh B1R6N5 MJ i 5S4 R49 8lw Y9 I8RH xQKL lAk8W3 Ts 7 WFU oNw I9K Wn ztPx rZLv NwZ28E YO n ouf xz6 ip9 aS WnNQ ASri wYC1sO tS q Xzo t8k 4KO z7 8LG6 GMNC ExoMh9 wl 5 vbs mnn q6H g6 WToJ un74 JxyNBX yV p vxN B0N 8wy mK 3reR eEzF xbK92xEQ36}   \end{equation} and    \begin{equation}    \nabla (|\uu_{n}(\omega,t,x)|^{p/2})      \text{~are~uniformly bounded in~}      L^2(\Omega\times[0,T], L^2(\TT^d))    ,    \llabel{ EL s 950 SNg Lmv iR C1bF HjDC ke3Sgt Ud C 4cO Nb4 EF2 4D 1VDB HlWA Tyswjy DO W ibT HqX t3a G6 mkfG JVWv 40lexP nI c y5c kRM D3o wV BdxQ m6Cv LaAgxi Jt E sSl ZFw DoY P2 nRYb CdXR z5HboV TU 8 NPg NVi WeX GV QZ7b jOy1 LRy9fa j9 n 2iE 1S0 mci 0Y D3Hg UxzL atb92M hC p ZKL JqH TSF RM n3KV kpcF LUcF0X 66 i vdq 01c Vqk oQ qu1u 2Cpi p5EV7A gM O Rcf ZjL x7L cv 9lXn 6rS8 WeK3zT LD P B61 JVW wMi KE uUZZ 4qiK 1iQ8N0 83 2 TS4 eLW 4ze Uy onzT Sofn a74RQV Ki u 9W3 kEa 3gH 8x diOh AcHs IQCsEt 0Q i 2IH w9v q9r NP lh1y 3wOR qrJcxU 4i 5 5ZH TOo GP0 zE qlB3 lkwG GRn7TO oK f GZu 5Bc zGK Fe oyIB tjNb 8xfQEK du O nJV OZh 8PU Va RonX BkIj BT9WWo r7 A 3Wf XxA 2f2 Vl XZS1 Ttsa b4n6R3 BK X 0XJ Tml kVt cW TMCs iFVy jfcrze Jk 5 MBx wR7 zzV On jlLz Uz5u LeqWjD ul 7 OnY ICG G9i Ry bTsY JXfr Rnub3p 16 J BQd 0zQ OkK ZK 6DeV gpXR ceOExL Y3 W KrX YyI e7d qM qanC CTjF W71LQ8 9m Q w1g Asw nYS Me WlHz 7ud7 xBwxF3 m8 u sa6 6yr 0nS ds Ywuq wXdD 0fRjFp eL O e0r csI uMG rS OqRE W5pl ybq3rF rk 7 YmL URU SSV YG ruD6 ksnL XBkvVS 2q 0 ljM PpI L27 Qd ZMUP baOo Lqt3bh n6 R X9h PAd QRp 9P I4fB kJ8u ILIArp Tl 4 E6j rUY wuF Xi FYaD EQ37}   \end{equation} then   \begin{align}       \liminf_{n\to \infty}         \EE\left[\DFGDSFSDFGHSFGH_0^{T} \DFGDSFSDFGHSFGH_{\TT^\dd} | \nabla (|\uu_{n}(\omega,t,x)|^{p/2})|^2 \,dx dt\right]         \geq        \EE\left[\DFGDSFSDFGHSFGH_0^{T} \DFGDSFSDFGHSFGH_{\TT^\dd} | \nabla (|\uu(\omega,t,x)|^{p/2})|^2 \,dx dt\right]     .    \label{FGHVBNFJHERTWERDFSDFGSDFGSDFGSDFGDFGWERTWERT38}   \end{align} \end{Lemma}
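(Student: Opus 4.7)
\textbf{Proof plan for Lemma~\ref{L02}.} The strategy is to extract a weakly convergent subsequence of $\nabla(|\un|^{p/2})$ in $L^2(\Omega\times[0,T]\times\TT^d)$, identify its weak limit as $\nabla(|\uu|^{p/2})$ via the strong convergence of $|\un|^{p/2}$, and then invoke weak lower semicontinuity of the $L^2$ norm.

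The first preparatory step is to note that
\[
  \EE\int_0^T\Vert\un(t)-\uu(t)\Vert_p^p\,dt\leq T\,\EE\sup_{0\leq t\leq T}\Vert\un(t)-\uu(t)\Vert_p^p\to 0,
\]
so $\un\to \uu$ in $L^p(\Omega\times[0,T]\times\TT^d)$, and in particular $\sup_n\Vert\un\Vert_{L^p(\Omega\times[0,T]\times\TT^d)}<\infty$. The second step upgrades this to $|\un|^{p/2}\to |\uu|^{p/2}$ strongly in $L^2(\Omega\times[0,T]\times\TT^d)$, via the pointwise inequality $\bigl||a|^{p/2}-|b|^{p/2}\bigr|\leq C_p(|a|^{p/2-1}+|b|^{p/2-1})|a-b|$ (valid since $p\geq 2$), followed by H\"older's inequality with exponents $p/(p-2)$ and $p/2$; this bounds $\Vert|\un|^{p/2}-|\uu|^{p/2}\Vert_{L^2}^2$ by $C(\Vert\un\Vert_{L^p}^{p-2}+\Vert\uu\Vert_{L^p}^{p-2})\Vert\un-\uu\Vert_{L^p}^{2}$, which tends to zero since the first factor remains bounded.

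The third step exploits reflexivity: from the uniform $L^2$ bound, every subsequence of $\{\nabla(|\un|^{p/2})\}$ admits a further subsequence converging weakly in $L^2(\Omega\times[0,T]\times\TT^d;\RR^d)$ to some limit $v$. To identify $v$, I pair against product test functions $\phi(\omega,t)\psi(x)$ with $\phi\in L^\infty(\Omega\times[0,T])$ and $\psi\in C^\infty(\TT^d;\RR^d)$; integration by parts in $x$ transfers the gradient onto $\psi$, and the strong $L^2$ convergence of $|\un|^{p/2}$ from step two passes the limit through, yielding $v=\nabla(|\uu|^{p/2})$ in the distributional sense and hence a.e.

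Finally, a standard $\liminf$ contradiction argument using weak lower semicontinuity of the $L^2$ norm along the extracted subsequence delivers~\eqref{FGHVBNFJHERTWERDFSDFGSDFGSDFGSDFGDFGWERTWERT38}. There is no serious analytical obstacle; the only delicate point is the identification of the weak limit, which is handled cleanly by the separation-of-variables test functions described above.
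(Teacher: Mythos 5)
Your proof is correct and complete. Note that the paper itself does not prove Lemma~\ref{L02} but cites it from~\cite{KXZ} (Lemma~4.4 there); the route you take --- strong convergence of $|\un|^{p/2}$ to $|\uu|^{p/2}$ in $L^2(\Omega\times[0,T]\times\TT^d)$ via the elementary inequality $\bigl||a|^{p/2}-|b|^{p/2}\bigr|\le C_p(|a|^{p/2-1}+|b|^{p/2-1})|a-b|$ and H\"older, followed by weak $L^2$ compactness of the gradients, identification of the weak limit against separated test functions $\phi(\omega,t)\psi(x)$, and weak lower semicontinuity of the norm along a subsequence realizing the $\liminf$ --- is the standard and expected argument, and it correctly delivers the implicit part of the conclusion as well, namely that $|\uu|^{p/2}$ has a distributional gradient in $L^2$ so that the right-hand side of~\eqref{FGHVBNFJHERTWERDFSDFGSDFGSDFGSDFGDFGWERTWERT38} is meaningful.
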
 \colb Note that in \eqref{FGHVBNFJHERTWERDFSDFGSDFGSDFGSDFGDFGWERTWERT39} and \eqref{FGHVBNFJHERTWERDFSDFGSDFGSDFGSDFGDFGWERTWERT38}, $T$ can be replaced by a stopping time by using indicator functions.  \par \startnewsection{Existence of a local strong solution}{sec4} Let $d=3$, and fix $p>3$; in particular, we allow  all constants to depend on $p$ without mention. Denote   \begin{equation*}    \mathcal{S}_n f(x)      =\sum_{|k_1|\leq n, |k_2|\leq n, |k_3|\leq n}\hat{f}(k)e^{2\pi i k\cdot x}    \comma f\in L^1(\TT^3)    \commaone n\in\mathbb{N}    ,   \end{equation*} i.e.,   \begin{equation}    \mathcal{S}_n=\mathcal{T}_{(n,n,n)}    \label{FGHVBNFJHERTWERDFSDFGSDFGSDFGSDFGDFGWERTWERT70}   \end{equation} (cf.~\eqref{FGHVBNFJHERTWERDFSDFGSDFGSDFGSDFGDFGWERTWERT40} and \eqref{FGHVBNFJHERTWERDFSDFGSDFGSDFGSDFGDFGWERTWERT70}). As pointed out in Section~\ref{sec2}, $\mathcal{S}_n$ is a continuous linear operator in all $L^q$ spaces ($1<q<\infty$). To construct a solution of \eqref{FGHVBNFJHERTWERDFSDFGSDFGSDFGSDFGDFGWERTWERT01}--\eqref{FGHVBNFJHERTWERDFSDFGSDFGSDFGSDFGDFGWERTWERT03}, we consider for $n\in\mathbb{N}$ a finite dimensional approximation  \begin{align} &d\un =\Delta \un\,dt  -\mathcal{S}_n\mathcal{P}\bigl(( \un\cdot \nabla)\un\bigr)\,dt+\mathcal{S}_n\sigma(\un)\,d\WW(t), \label{FGHVBNFJHERTWERDFSDFGSDFGSDFGSDFGDFGWERTWERT44}  \\& \nabla\cdot \un = 0 \comma  \label{FGHVBNFJHERTWERDFSDFGSDFGSDFGSDFGDFGWERTWERT45} \\& \un( 0,x)= \mathcal{S}_n\uu_0 ( x) \comma x\in\mathbb{T}^3.   \label{FGHVBNFJHERTWERDFSDFGSDFGSDFGSDFGDFGWERTWERT46} \end{align} Note that the cancellation property of the convective term remains true in $\mathcal{S}_nL^2$; namely, for $u,v \in (\mathcal{S}_nL^2(\TT^3))^3 \subset (H^1(\TT^3))^3$ with $\nabla\cdot u=\nabla\cdot v=0$, we have \begin{align} \begin{split} &\DFGDSFSDFGHSFGH_{\TT^\dd} u\cdot \mathcal{S}_n\mathcal{P}\bigl(( v\cdot \nabla)u\bigr) \,dx  = \DFGDSFSDFGHSFGH_{\TT^\dd} \bigl( u_j\mathcal{S}_n (v_i\DFGDSFSDFGHSFGI_i u_j) + u_j\mathcal{S}_nR_j  R_k \DFGDSFSDFGHSFGI_i( v_i u_k) \bigr) \,dx\\ &\indeq \indeq  = \DFGDSFSDFGHSFGH_{\TT^\dd} \bigl( u_jv_i\DFGDSFSDFGHSFGI_i u_j + u_jR_j  R_k \DFGDSFSDFGHSFGI_i ( v_i u_k) \bigr) \,dx\\ &\indeq \indeq = \DFGDSFSDFGHSFGH_{\TT^\dd} \left(-\frac{1}{2}( u_j u_j) \DFGDSFSDFGHSFGI_iv_i -
\DFGDSFSDFGHSFGI_ju_j R_k R_i ( v_i u_k) \right) \,dx =0,  \end{split}\label{can} \end{align} where $R_i$ are the Riesz transforms defined in \eqref{FGHVBNFJHERTWERDFSDFGSDFGSDFGSDFGDFGWERTWERT11}; in the third inequality, we used $R_j\DFGDSFSDFGHSFGI_{i}=\DFGDSFSDFGHSFGI_{j}R_i$. If $\DFGDSFSDFGHSFGH_{\TT^3} u=\DFGDSFSDFGHSFGH_{\TT^3} v=0$, then  \begin{align} \begin{split} \left\vert \DFGDSFSDFGHSFGH_{\TT^\dd} u\cdot\mathcal{S}_n\mathcal{P}\bigl(( v\cdot \nabla)w\bigr) \,dx \right\vert  \leq C \Vert u\Vert_6\Vert v\Vert_3\Vert \nabla w\Vert_2 \leq C \Vert \nabla u\Vert_2\Vert v\Vert_2^{1/2}\Vert \nabla v\Vert_2^{1/2}\Vert \nabla w\Vert_2. \end{split}\label{can2} \end{align}  Also, note that in $\mathcal{S}_nL^2(\TT^3)$ we have the norm equivalence \begin{equation}\llabel{VvrD b2zVpv Gg 6 zFY ojS bMB hr 4pW8 OwDN Uao2mh DT S cei 90K rsm wa BnNU sHe6 RpIq1h XF N Pm0 iVs nGk bC Jr8V megl 416tU2 nn o llO tcF UM7 c4 GC8C lasl J0N8Xf Cu R aR2 sYe fjV ri JNj1 f2ty vqJyQN X1 F YmT l5N 17t kb BTPu F471 AH0Fo7 1R E ILJ p4V sqi WT TtkA d5Rk kJH3Ri RN K ePe sR0 xqF qn QjGU IniV gLGCl2 He 7 kmq hEV 4PF dC dGpE P9nB mcvZ0p LY G idf n65 qEu Df Mz2v cq4D MzN6mB FR t QP0 yDD Fxj uZ iZPE 3Jj4 hVc2zr rc R OnF PeO P1p Zg nsHA MRK4 ETNF23 Kt f Gem 2kr 5gf 5u 8Ncu wfJC av6SvQ 2n 1 8P8 RcI kmM SD 0wrV R1PY x7kEkZ Js J 7Wb 6XI WDE 0U nqtZ PAqE ETS3Eq NN f 38D Ek6 NhX V9 c3se vM32 WACSj3 eN X uq9 GhP OPC hd 7v1T 6gqR inehWk 8w L oaa wHV vbU 49 02yO bCT6 zm2aNf 8x U wPO ilr R3v 8R cNWE k7Ev IAI8ok PA Y xPi UlZ 4mw zs Jo6r uPmY N6tylD Ee e oTm lBK mnV uB B7Hn U7qK n353Sn dt o L82 gDi fcm jL hHx3 gi0a kymhua FT z RnM ibF GU5 W5 x651 0NKi 85u8JT LY c bfO Mn0 auD 0t vNHw SAWz E3HWcY TI d 2Hh XML iGi yk AjHC nRX4 uJJlct Q3 y Loq i9j u7K j8 4EFU 49ud eA93xZ fZ C BW4 bSK pyc f6 nncm vnhK b0HjuK Wp 6 b88 pGC 3U7 km CO1e Y8jv Ebu59z mG Z sZh 93N wvJ Yb kEgD pJBj gQeQUH 9k C az6 ZGp cequiv} \frac{1}{C} \Vert f\Vert_2\leq \Vert \nabla f\Vert_2\leq C_n \Vert f\Vert_2, \end{equation} under the mean-zero condition, where the second constant depends on the dimension $n$ of the projection $\mathcal{S}_n$, and the first constant is universal. Hence, we conclude from \eqref{can2} that  \begin{align} \begin{split} \left\vert \DFGDSFSDFGHSFGH_{\TT^\dd} v\cdot\mathcal{S}_n\mathcal{P}\bigl(( v\cdot \nabla)w\bigr) \,dx \right\vert  &\leq C_n \Vert u\Vert_2 \Vert v\Vert_2 \Vert w\Vert_2 , \end{split}\label{can3} \end{align}  for $u,v,w\in \mathcal{S}_nL^2(\TT^3)$ with vanishing means. \par The next result extends Theorem~3.1 and Lemma~3.1 in~\cite{F} from an additive to the multiplicative noise. The statement  is proven by modifying the arguments in~\cite{F} and \cite{KXZ}.  For the sake of completeness and conciseness, we include a sketch of the proof. \par \cole \begin{Lemma}\label{L001} Let $T>0$,  and suppose $u_0\in L^2(\Omega; L^2)$ satisfies $\nabla\cdot u_0=0$ and $\DFGDSFSDFGHSFGH_{\TT^3} u_0=0$. Then for all $n\in\NNp$, the initial value problem \eqref{FGHVBNFJHERTWERDFSDFGSDFGSDFGSDFGDFGWERTWERT44}--\eqref{FGHVBNFJHERTWERDFSDFGSDFGSDFGSDFGDFGWERTWERT46} has a unique strong solution $\un\in L^2(\Omega; C([0,T], L^2))\cap L^2(\Omega; L^2([0,T], H^1))$. Moreover, \begin{equation}\label{FGHVBNFJHERTWERDFSDFGSDFGSDFGSDFGDFGWERTWERT63} \EE\left[\sup_{0\leq s\leq T}\Vert \un(s) \Vert_2^2   +\DFGDSFSDFGHSFGH_0^{T}  \Vert \nabla\un(s) \Vert_2^2 \,ds \right]\leq C , \end{equation} for some positive constant $C$  independent of $n$.  \end{Lemma}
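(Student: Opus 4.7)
The plan is to obtain the lemma in two steps: first, local existence for the finite-dimensional SDE \eqref{FGHVBNFJHERTWERDFSDFGSDFGSDFGSDFGDFGWERTWERT44}--\eqref{FGHVBNFJHERTWERDFSDFGSDFGSDFGSDFGDFGWERTWERT46} via a contraction argument, and second, the $L^2$ energy bound \eqref{FGHVBNFJHERTWERDFSDFGSDFGSDFGSDFGDFGWERTWERT63}, which both extends the local solution to $[0,T]$ and furnishes the estimate claimed. The key simplification afforded by the projection $\mathcal{S}_n$ is the norm equivalence on $\mathcal{S}_nL^2(\TT^3)$: on this finite-dimensional space the $L^2$ and $H^1$ norms are equivalent with an $n$-dependent constant, so the quadratic nonlinearity becomes locally Lipschitz in $L^2$.

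For local existence, I follow the strategy sketched in the introduction: fix $R>0$, and for $v$ in the ball $\{\sup_{[0,T_0]}\Vert v(t)\Vert_2 \leq R\}$ of $L^2(\Omega; C([0,T_0]; \mathcal{S}_nL^2))$, define the map $\Psi : v \mapsto u$ where $u$ solves the linear additive-noise SDE
\[
du = \Delta u\,dt - \mathcal{S}_n\mathcal{P}((v\cdot\nabla)v)\,dt + \mathcal{S}_n\sigma(v)\,d\WW, \qquad u(0)=\mathcal{S}_n u_0.
\]
This equation admits an explicit mild form on $\mathcal{S}_nL^2$. Using \eqref{can3} to handle the difference of quadratic forcings and the Lipschitz assumption \eqref{FGHVBNFJHERTWERDFSDFGSDFGSDFGSDFGDFGWERTWERT25} (combined with the $L^2$-continuity \eqref{FGHVBNFJHERTWERDFSDFGSDFGSDFGSDFGDFGWERTWERT07} of $\mathcal{S}_n$) to handle the difference of diffusions (the stochastic term being treated by BDG), one verifies that $\Psi$ is a contraction on the ball for $T_0 = T_0(n,R)$ sufficiently small. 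A Banach fixed point then yields a unique local solution, and concatenating via the stopping times $\tau_R^n := \inf\{t : \Vert u^{(n)}(t)\Vert_2 \geq R\}\wedge T$ produces a maximal solution up to $\tau_n^{\ast} := \lim_{R\to\infty}\tau_R^n$; pathwise uniqueness follows from the same contraction estimate applied to any two solutions.

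For the a priori bound, Itô's formula for $\Vert u^{(n)}(t\wedge\tau_R^n)\Vert_2^2$ combined with the cancellation \eqref{can} applied to $u=v=u^{(n)}$ (which is legitimate since $u^{(n)}$ is divergence-free and mean zero in $\mathcal{S}_nL^2$) eliminates the convective contribution, leaving
\[
d\Vert u^{(n)}\Vert_2^2 + 2\Vert \nabla u^{(n)}\Vert_2^2\,dt = 2\langle u^{(n)},\mathcal{S}_n\sigma(u^{(n)})\,d\WW\rangle + \Vert\mathcal{S}_n\sigma(u^{(n)})\Vert_{\mathbb{L}^2}^2\,dt.
\]
The sublinear growth \eqref{FGHVBNFJHERTWERDFSDFGSDFGSDFGSDFGDFGWERTWERT24} at $r=2$, together with the $L^2$-continuity of $\mathcal{S}_n$, gives $\Vert\mathcal{S}_n\sigma(u^{(n)})\Vert_{\mathbb{L}^2}^2\leq C(\Vert u^{(n)}\Vert_2^2+1)$. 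Taking expectation, applying Gronwall on $[0,t\wedge\tau_R^n]$, and sending $R\to\infty$ via monotone convergence shows $\tau_n^{\ast}=T$ a.s.~and controls $\EE\Vert u^{(n)}(t)\Vert_2^2 + 2\EE\int_0^t\Vert\nabla u^{(n)}\Vert_2^2\,ds$ uniformly in $n$. A standard BDG plus Cauchy--Schwarz argument, in which $\tfrac12\EE\sup_{[0,T]}\Vert u^{(n)}\Vert_2^2$ is absorbed on the left, then upgrades this to the supremum bound \eqref{FGHVBNFJHERTWERDFSDFGSDFGSDFGSDFGDFGWERTWERT63}.

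I expect the contraction step to be the main technical obstacle: the multiplicative noise prevents a direct appeal to the additive-noise results of \cite{F}, and the quadratic convective term forces one to localize via the $R$-truncation, carefully match $T_0$ to $n$ and $R$, and only then extend globally through the $n$-uniform energy estimate. Once the cancellation \eqref{can} is in place, the energy estimate itself is routine.
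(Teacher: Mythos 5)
Your proposal follows essentially the same route as the paper: both reduce the multiplicative noise to an additive one through a Picard-type fixed point, localize the quadratic nonlinearity using the finite-dimensionality of $\mathcal{S}_nL^2$, remove the localization by stopping times, and obtain the $n$-uniform bound from the cancellation \eqref{can}, the sublinearity \eqref{FGHVBNFJHERTWERDFSDFGSDFGSDFGSDFGDFGWERTWERT24}, Gr\"onwall, and the BDG inequality. The only notable difference is that the paper localizes with a smooth cutoff $\phi^M(\Vert u\Vert_2)$ multiplying the nonlinearity (so the truncated equation is globally well-posed for arbitrary data) rather than restricting to a ball, which sidesteps the fact that your map $\Psi$ need not send the ball $\{\sup_{t}\Vert v(t)\Vert_2\le R\}$ into itself on the event $\{\Vert \mathcal{S}_n u_0\Vert_2> R\}$, since $\Vert u_0\Vert_2$ is not assumed a.s.\ bounded; this is a repairable technicality rather than a gap.
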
	 \par \colb \begin{proof}[Proof of Lemma~\ref{L001}] We fix $n\in\mathbb{N}$ and write $\mathcal{S}$ instead of $\mathcal{S}_n$ for simplicity. Let $u^{(0)}$ be the unique strong solution of the standard heat equation subject to the initial data $\mathcal{S}\uu_0$. Consider the iteration   \begin{align}    &du^{(k)}    =\Delta u^{(k)}\,dt     -(\phi_k^M)^2\mathcal{S}\mathcal{P}\bigl(( u^{(k)}\cdot \nabla)u^{(k)}\bigr)\,dt+(\phi_{k-1}^M)^2\mathcal{S}\sigma(u^{(k-1)})\,d\WW(t), \label{it-1}    \\&    \nabla\cdot u^{(k)} = 0    \comma \label{it-2}    \\&    u^{(k)}( 0,x)= \mathcal{S}\uu_0 ( x)    \comma x\in\mathbb{T}^3    ,    \label{it-3}  \end{align} where   \[    \phi_k^M=\phi^M(\Vert u^{(k)}(t,\cdot)\Vert_2)   \]  with $\phi^M\in C^{\infty}(\RR)$, $\phi^M(x)=1$ if $|x|\leq M/2$, $0\leq \phi^M(x)\leq 1$ if $M/2\leq |x|\leq M$, and $\phi^M(x)=0$ if $|x|\geq M$.  Since the noise is additive and the bilinear term  is globally Lipschitz in $u^{(k)}$,  due to the factor $(\phi_k^M)^2$, we can prove inductively as in~\cite{KXZ} that \eqref{it-1}--\eqref{it-3} has a pathwise unique strong solution in $L^2(\Omega; C([0,T], L^2))$ for all $k\in \NNp$. Furthermore, by \eqref{can}, we conclude that  \begin{equation}\label{bdd}  \EE\left[\sup_{0\leq s\leq T}\Vert u^{(k)}(s) \Vert_2^2  +\sum_{j}\DFGDSFSDFGHSFGH_0^{T}   \DFGDSFSDFGHSFGH_{\TT^3} | \nabla u^{(k)}_j(s,x)|^2 \,dx\,ds  \right]\leq C_{T,M}\big(\EE[\Vert u_0 \Vert_2^2 ]+1\big).  \end{equation} Denoting $v^{(k)}=u^{(k)}-u^{(k-1)}$ and applying the It\^{o} formula to the equation for the difference, we obtain  \begin{equation*}  \EE\left[\sup_{0\leq s\leq t}\Vert v^{(k)}(s) \Vert_2^2\right]  \leq C_{M}t\EE\left[ \sup_{0\leq s\leq t}\Vert v^{(k-1)}(s) \Vert_2^2\right],  \end{equation*} where we also used \eqref{can2}--\eqref{can3}. This implies that there exists a fixed point $u_M$ of \eqref{it-1}--\eqref{it-3} in $L^2(\Omega; C([0,t], L^2))$ for a sufficiently small $t\in(0,T]$. The square of the truncation function $\phi_k^M$ was needed for claiming that such an interval  is uniform with respect to $k$ and for previously establishing the existence of $u^{(k)}$.  Passing to the limit in \eqref{bdd} by using Lemma~\ref{L02}, we conclude that \eqref{bdd} holds for $u_M$ with $T$ being replaced by $t$. Utilizing this estimate, we can show that $u_M$ is a solution of the truncated finite-dimensional model  \begin{align}  &du  =\Delta u\,dt   -(\phi_u^M)^2\mathcal{S}\mathcal{P}\bigl(( u\cdot \nabla )u\bigr)\,dt+(\phi_u^M)^2\mathcal{S}\sigma(u)\,d\WW(t), \label{itt1}  \\&  \nabla\cdot u = 0  \comma \label{itt2}  \\&  u( 0,x)= \mathcal{S}\uu_0 ( x)  \comma x\in\mathbb{T}^3.  \label{itt3}  \end{align}  The pathwise uniqueness of the solution follows from a contraction argument similarly to above. The extension of the existence and uniqueness results from a small time interval to $[0,T]$ is obtained by using the pathwise uniqueness, and \eqref{bdd} still holds. \par    To address the non-truncated model \eqref{FGHVBNFJHERTWERDFSDFGSDFGSDFGSDFGDFGWERTWERT44}--\eqref{FGHVBNFJHERTWERDFSDFGSDFGSDFGSDFGDFGWERTWERT46}, we introduce the stopping time  \begin{equation*}  \eta_n^M =\inf\left\{t>0: \Vert u^{(n)}(s) \Vert_2 \geq \frac{M}{2}\right\}.  \end{equation*}  Clearly, \eqref{FGHVBNFJHERTWERDFSDFGSDFGSDFGSDFGDFGWERTWERT44}--\eqref{FGHVBNFJHERTWERDFSDFGSDFGSDFGSDFGDFGWERTWERT46} agrees with \eqref{itt1}--\eqref{itt3} on $[0, \eta_n^M\wedge T]$ and thus we have a pathwise unique strong solution at least up to $\eta_n^M\wedge T$. Applying the It\^{o} formula to \eqref{FGHVBNFJHERTWERDFSDFGSDFGSDFGSDFGDFGWERTWERT44} on $[0, \eta_n^M\wedge T]$ and using \eqref{can}, we obtain the energy estimate  \begin{align}\label{gron0}  \begin{split}  &\EE\left[\sup_{0\leq s\leq \eta_n^M\wedge T}\Vert u^{(n)}(s) \Vert_2^2  +\sum_{j}\DFGDSFSDFGHSFGH_0^{\eta_n^M\wedge T}   \DFGDSFSDFGHSFGH_{\TT^3} | \nabla u^{(n)}_j(s,x)|^2 \,dx\,ds  \right]\\  &\indeq \leq C\EE\left[\Vert u_0 \Vert_2^2 +\sum_{j}\DFGDSFSDFGHSFGH_0^{\eta_n^M\wedge T}    \Vert \mathcal{S}\sigma_j(\un)(s)\Vert_{\mathbb{L}^{2}}\,ds\right],  \end{split}  \end{align}  which by \eqref{FGHVBNFJHERTWERDFSDFGSDFGSDFGSDFGDFGWERTWERT24} and Lemma~\ref{L01} leads to   \begin{align}\label{gron}  \begin{split}  &\EE\left[\sup_{0\leq s\leq \eta_n^M\wedge T}\Vert u^{(n)}(s) \Vert_2^2    \right]  \leq C\left(\EE\left[  \Vert u_0 \Vert_2^2 +  \DFGDSFSDFGHSFGH_0^{\eta_n^M\wedge T}   (\Vert u^{(n)}(s) \Vert_2^2+1 ) \,ds  \right]\right)\\  &\indeq\indeq \leq C_T  \EE[\Vert u_0 \Vert_2^2 +1]  +  C\DFGDSFSDFGHSFGH_0^{T} \EE  \left[\sup_{0\leq s\leq \eta_n^M\wedge t}\Vert u^{(n)}(s) \Vert_2^2\right] \,dt.  \end{split}  \end{align}  Then Gr\"{o}nwall's lemma yields  \begin{align*}  \begin{split}  \EE\left[\sup_{0\leq s\leq  T}\Vert u^{(n)}(s\wedge\eta_n^M) \Vert_2^2    \right]=  \EE\left[\sup_{0\leq s\leq \eta_n^M\wedge T}\Vert u^{(n)}(s) \Vert_2^2    \right]   \leq C_T  \EE[\Vert u_0 \Vert_2^2 +1].  \end{split}  \end{align*}  Note that  \begin{equation*}  \EE\left[\indic_{\eta_n^M\leq  T}\Vert u^{(n)}(\eta_n^M) \Vert_2^2    \right]\leq \EE\left[\sup_{0\leq s\leq  T}\Vert u^{(n)}(s\wedge\eta_n^M) \Vert_2^2    \right].  \end{equation*}  Then, $\PP(\eta_n^M\leq  T)\leq C_T  \EE[\Vert u_0 \Vert_2^2 +1]/M^2$. Also, observe that $\eta_n^M$ is an increasing function of $M$ and that   \[  \PP(u_M=u_K,~ t\in[0, \eta_n^M])=1  \]  if $M<K$. Then defining $\eta^{\infty}=\lim_{M\to\infty} \eta_n^M$, we may uniquely define a process $u^{\infty}$ such that $u^{\infty}=u_M$ on $[0,\eta_n^M]$, and $u^{\infty}$ solves \eqref{FGHVBNFJHERTWERDFSDFGSDFGSDFGSDFGDFGWERTWERT44}--\eqref{FGHVBNFJHERTWERDFSDFGSDFGSDFGSDFGDFGWERTWERT46} on $[0,\eta^{\infty}]$. Since $\PP(\eta^{\infty}\leq  T)=\lim_{M\to\infty}\PP(\eta_n^M\leq  T)=0$ and $T$ is arbitrary, the solution is global. Note that \eqref{gron0} also implies   \begin{align*}  \begin{split}  &\EE\left[\sup_{0\leq s\leq \eta_n^M\wedge T}\Vert u^{(n)}(s) \Vert_2^2  +\sum_{j}\DFGDSFSDFGHSFGH_0^{\eta_n^M\wedge T}   \DFGDSFSDFGHSFGH_{\TT^3} | \nabla u^{(n)}_j(s,x)|^2 \,dx\,ds  \right]\\  &\indeq \leq C_T  \EE[\Vert u_0 \Vert_2^2 +1]  +  C\DFGDSFSDFGHSFGH_0^{T} \EE  \left[\sup_{0\leq s\leq \eta_n^M\wedge t}\Vert u^{(n)}(s) \Vert_2^2  +\sum_{j}\DFGDSFSDFGHSFGH_0^{\eta_n^M\wedge t} 
 \DFGDSFSDFGHSFGH_{\TT^3} | \nabla u^{(n)}_j(s,x)|^2 \,dx\,ds  \right] \,dt.  \end{split}  \end{align*}  Applying Gr\"{o}nwall's lemma and sending $M\to\infty$ in \eqref{gron}, we obtain \eqref{FGHVBNFJHERTWERDFSDFGSDFGSDFGSDFGDFGWERTWERT63}, concluding the proof. \end{proof} \par  We proceed by deriving an $L^p$ estimate of $\un$. For every $M>0$, we introduce stopping times relative to solutions $\{\un \}$ as  \begin{equation}\label{FGHVBNFJHERTWERDFSDFGSDFGSDFGSDFGDFGWERTWERT64} \tau_n^M =\inf\left\{t>0: \sup_{0\leq s\leq t}\Vert \un(s) \Vert_p  +\left(\DFGDSFSDFGHSFGH_0^{t}    \Vert \un(s) \Vert_{3p}^p \,ds\right)^{1/p}\geq M\right\}. \end{equation} Adapting the proof in~\cite[Lemma~3]{KZ}  (by contradiction and passing to the limit) and assuming $\DFGDSFSDFGHSFGH_{\TT^3} f=0$, we obtain the Gagliardo-Nirenberg inequality on $\TT^3$, \begin{equation}\llabel{pg rH r79I eQvT Idp35m wW m afR gjD vXS 7a FgmN IWmj vopqUu xF r BYm oa4 5jq kR gTBP PKLg oMLjiw IZ 2 I4F 91C 6x9 ae W7Tq 9CeM 62kef7 MU b ovx Wyx gID cL 8Xsz u2pZ TcbjaK 0f K zEy znV 0WF Yx bFOZ JYzB CXtQ4u xU 9 6Tn N0C GBh WE FZr6 0rIg w2f9x0 fW 3 kUB 4AO fct vL 5I0A NOLd w7h8zK 12 S TKy 2Zd ewo XY PZLV Vvtr aCxAJm N7 M rmI arJ tfT dd DWE9 At6m hMPCVN UO O SZY tGk Pvx ps GeRg uDvt WTHMHf 3V y r6W 3xv cpi 0z 2wfw Q1DL 1wHedT qX l yoj GIQ AdE EK v7Ta k7cA ilRfvr lm 8 2Nj Ng9 KDS vN oQiN hng2 tnBSVw d8 P 4o3 oLq rzP NH ZmkQ Itfj 61TcOQ PJ b lsB Yq3 Nul Nf rCon Z6kZ 2VbZ0p sQ A aUC iMa oRp FW fviT xmey zmc5Qs El 1 PNO Z4x otc iI nwc6 IFbp wsMeXx y8 l J4A 6OV 0qR zr St3P MbvR gOS5ob ka F U9p OdM Pdj Fz 1KRX RKDV UjveW3 d9 s hi3 jzK BTq Zk eSXq bzbo WTc5yR RM o BYQ PCa eZ2 3H Wk9x fdxJ YxHYuN MN G Y4X LVZ oPU Qx JAli DHOK ycMAcT pG H Ikt jlI V25 YY oRC7 4thS sJClD7 6y x M6B Rhg fS0 UH 4wXV F0x1 M6Ibem sT K SWl sG9 pk9 5k ZSdH U31c 5BpQeF x5 z a7h WPl LjD Yd KH1p OkMo 1Tvhxx z5 F LLu 71D UNe UX tDFC 7CZ2 473sjE Re b aYt 2sE pV9 wD J8RG UqQm boXwJn HK F Mps XBv AsX 8N YRZM wmZQ ctltsq of GN-1} \Vert |f|^{p/2}\Vert_{r}\leq C_0\Vert |f|^{p/2}\Vert_{2}^{1-\alpha}\Vert \nabla(|f|^{p/2})\Vert_{2}^{\alpha}, \end{equation} where $\alpha=3(1/2-1/r)$ and $C_0>0$ is independent of $f$.  In particular, setting $r=6$ yields  \begin{equation}\label{GN} \Vert f\Vert_{3p}^p\leq C_0 \Vert \nabla(|f|^{p/2})\Vert_{2}^2 , \end{equation} provided $f$ has $x$-mean zero. We need this inequality to prove the next lemma which asserts the boundedness of $\{\un\}$ in $L^p(\Omega; C([0,S\wedge \tau_n^M], L^p))$ for some deterministic value $S$ that is uniform with respect to $n$.  \cole \begin{Lemma}   \label{T002} Let $p>3$ and $K\geq 1$. Suppose that $\nabla\cdot \uu_0=0$, $\DFGDSFSDFGHSFGH_{\TT^3} u_0=0$, and $\Vert u_0\Vert_p\leq K~a.s$. Then there exist $M> K$ and $S>0$ such that     \begin{align}   \begin{split}   \EE\left[   \sup_{0\leq s\leq \tau_n^M\wedge S}\Vert\un(s,\cdot)\Vert_p^p   +\DFGDSFSDFGHSFGH_0^{\tau_n^M\wedge S}    \sum_{j}    \DFGDSFSDFGHSFGH_{\TT^3} | \nabla (|\un_j(s,x)|^{p/2})|^2 \,dx\,ds   \right]   \leq    C\EE\bigl[\Vert\uu_0\Vert_p^p   +1   \bigr].   \end{split}\label{FGHVBNFJHERTWERDFSDFGSDFGSDFGSDFGDFGWERTWERT65}   \end{align} Moreover,   \begin{align}   \begin{split}   \lim_{S\to 0} \sup_{n}\PP\left[   \sup_{0\leq s\leq \tau_n^M\wedge S}\Vert u^{(n)}(s,\cdot)\Vert_p^p   +\DFGDSFSDFGHSFGH_0^{\tau_n^M\wedge S}    \sum_{j}    \DFGDSFSDFGHSFGH_{\TT^3} | \nabla (|u^{(n)}_j(s,x)|^{p/2})|^2 \,dx\,ds \geq M^p   \right]   =0,   \end{split}\label{FGHVBNFJHERTWERDFSDFGSDFGSDFGSDFGDFGWERTWERT66}   \end{align} for any fixed $K> 0$ and the corresponding $M$. \end{Lemma}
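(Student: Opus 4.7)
The overall plan is to invoke Lemma~\ref{T01} applied to the stochastic Stokes problem obtained from~\eqref{FGHVBNFJHERTWERDFSDFGSDFGSDFGSDFGDFGWERTWERT44} by treating the convective term as a forcing: set $f^{(n)}=-\mathcal{S}_{n}\mathcal{P}((\un\cdot\nabla)\un)$ and $g^{(n)}=\mathcal{S}_{n}\sigma(\un)$. Using the remark following Lemma~\ref{L02}, the horizon $T$ in Lemma~\ref{T01} is replaced by $\tau_{n}^{M}\wedge S$ via indicator functions, reducing matters to estimating $\int\Vert f^{(n)}\Vert_{-1,q}^{p}\,dt$ and $\int\Vert g^{(n)}\Vert_{\mathbb{L}^{p}}^{p}\,dt$ on the stopped interval, with dependence on $M$ and $S$ favorable for smallness.

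For the nonlinear term I would pick $q$ inside the interval $(p/2\vee 3p/(p+1),\,3p/4)$, which is non-empty precisely because $p>3$. Such a $q$ fulfils the admissibility condition of Lemma~\ref{T01}, and the H\"older interpolation $\Vert u\Vert_{2q}^{2}\leq \Vert u\Vert_{p}^{\alpha}\Vert u\Vert_{3p}^{\beta}$ holds with $\alpha+\beta=2$ and $\beta=3-3p/(2q)\in(0,1)$. Combined with boundedness of $\mathcal{S}_{n}\mathcal{P}$ on $W^{-1,q}$ this yields $\Vert f^{(n)}\Vert_{-1,q}^{p}\leq C\Vert \un\Vert_{p}^{\alpha p}\Vert \un\Vert_{3p}^{\beta p}$. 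On $[0,\tau_{n}^{M}\wedge S]$ the stopping time gives both $\sup\Vert \un\Vert_{p}\leq M$ and $\int\Vert \un\Vert_{3p}^{p}\,ds\leq M^{p}$, so H\"older in time with exponents $1/\beta$ and $1/(1-\beta)$ produces
\[
\int_{0}^{\tau_{n}^{M}\wedge S}\Vert f^{(n)}\Vert_{-1,q}^{p}\,dt \;\leq\; M^{\alpha p}\,S^{1-\beta}\,M^{\beta p}\;=\;M^{2p}S^{1-\beta}.
\]
The noise term is controlled by the sublinear growth~\eqref{FGHVBNFJHERTWERDFSDFGSDFGSDFGSDFGDFGWERTWERT24}, Lemma~\ref{L01}, and $\sup\Vert \un\Vert_{p}\leq M$, giving $\int_{0}^{\tau_{n}^{M}\wedge S}\Vert g^{(n)}\Vert_{\mathbb{L}^{p}}^{p}\,dt\leq CS(M^{p}+1)$. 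Since $\Vert \un(0)\Vert_{p}=\Vert\mathcal{S}_{n}\uu_{0}\Vert_{p}\leq C_{p}K$ by~\eqref{FGHVBNFJHERTWERDFSDFGSDFGSDFGSDFGDFGWERTWERT42}, Lemma~\ref{T01} then delivers~\eqref{FGHVBNFJHERTWERDFSDFGSDFGSDFGSDFGDFGWERTWERT65} for any $M>K$ and all sufficiently small $S$.

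For~\eqref{FGHVBNFJHERTWERDFSDFGSDFGSDFGSDFGDFGWERTWERT66} the key observation is that $X_{n}\geq \Vert \un(0)\Vert_{p}^{p}$ deterministically, since the supremum dominates the value at $t=0$ and the gradient integral is non-negative. Hence $\{X_{n}\geq M^{p}\}\subseteq\{X_{n}-\Vert \un(0)\Vert_{p}^{p}\geq M^{p}-\Vert \un(0)\Vert_{p}^{p}\}$, and choosing $M>C_{p}K$ leaves a uniform positive gap $\Delta:=M^{p}-C_{p}^{p}K^{p}$. Tracking the \Ito{}--Gr\"onwall structure underlying Lemma~\ref{T01} should produce a refined inequality of the form $\EE[X_{n}]\leq e^{C_{0}S}\EE[\Vert \un(0)\Vert_{p}^{p}]+C(M^{2p}S^{1-\beta}+S(M^{p}+1))$, whence $\EE[X_{n}-\Vert \un(0)\Vert_{p}^{p}]\leq (e^{C_{0}S}-1)C_{p}^{p}K^{p}+C(M^{2p}S^{1-\beta}+S(M^{p}+1))\to 0$ as $S\to 0$, uniformly in $n$. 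Markov's inequality then yields $\sup_{n}\PP(X_{n}\geq M^{p})\leq\sup_{n}\EE[X_{n}-\Vert \un(0)\Vert_{p}^{p}]/\Delta\to 0$, which is~\eqref{FGHVBNFJHERTWERDFSDFGSDFGSDFGSDFGDFGWERTWERT66}.

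The main obstacle is extracting the Gr\"onwall-type refinement in which the coefficient of $\Vert\un(0)\Vert_{p}^{p}$ tends to $1$ as $S\to 0$; without such a refinement a direct Markov bound at level $M^{p}$ would only yield a positive limit of the form $CK^{p}/M^{p}$, which is insufficient for~\eqref{FGHVBNFJHERTWERDFSDFGSDFGSDFGSDFGDFGWERTWERT66}. The interpolation exponent $\beta<1$, which is what generates the $S^{1-\beta}$ smallness in the nonlinear estimate, is available precisely because $p>3$, in agreement with the standing hypothesis.
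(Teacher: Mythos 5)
Your proposal is correct and follows essentially the same route as the paper: part one applies Lemma~\ref{T01} to the stopped equation with the convective term written in divergence form and controlled through the stopping-time bounds (the paper absorbs the $\Vert\un\Vert_{3p}^p$ contribution into the dissipation via an $\varepsilon$-splitting and the Gagliardo--Nirenberg inequality~\eqref{GN}, while you use H\"older in time against $\int\Vert\un\Vert_{3p}^p\,ds\leq M^p$ --- both work). For part two, the ``refined Gr\"onwall inequality'' you say ``should'' exist is precisely what the paper derives by redoing the trajectory \Ito{} expansion of $\Vert\un_j(t)\Vert_p^p$ pathwise, which gives $X_n-\Vert\mathcal{S}_n\uu_0\Vert_p^p\leq C_{p,M}\DFGDSFSDFGHSFGH_0^t(\Vert\un\Vert_p^p+1)\,dr+C_p\,\sup_t|{\textstyle\DFGDSFSDFGHSFGH_0^t}\cdots d\WW_r|$; the paper then uses a union bound and Chebyshev on the two terms rather than your single Markov inequality on the nonnegative increment, and the only quantitative correction to your sketch is that the BDG estimate of the martingale supremum contributes $CS^{1/p}$ rather than $O(S)$, which still vanishes as $S\to0$ uniformly in $n$.
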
 \colb \par \begin{proof}[Proof of Lemma~\ref{T002}] Let $M=2\sup_{n}\Vert \mathcal{S}_n u_0\Vert_p+1$ (cf.~\eqref{FGHVBNFJHERTWERDFSDFGSDFGSDFGSDFGDFGWERTWERT42}). The continuity of $\un$ implies that $\tau_n^M>0$. Since both $\mathcal{S}_n$ and $\mathcal{P}$ preserve the $x$-zero mean, we may apply Lemma~\ref{T01} to \eqref{FGHVBNFJHERTWERDFSDFGSDFGSDFGSDFGDFGWERTWERT44} on    \begin{align}   \begin{split}   &d \un_j   -\Delta \un_j \,dt    = -   \bigl(\mathcal{S}_n\mathcal{P}\bigl(\un\cdot \nabla) \un\bigr)_j \,dt   +\mathcal{S}_n\sigma_j(\un)\,d\WW_t   \commaone j=1,2,3   ,   \end{split}   \label{FGHVBNFJHERTWERDFSDFGSDFGSDFGSDFGDFGWERTWERT48}   \end{align} on $[0, S\wedge \tau_n^M]$ for a fixed $j\in\{1,2,3\}$, writing the first term on the right side of \eqref{FGHVBNFJHERTWERDFSDFGSDFGSDFGSDFGDFGWERTWERT48} as   \begin{equation}   - \sum_{i}\DFGDSFSDFGHSFGI_{i}   \mathcal{S}_n\bigl(\mathcal{P}\bigl(\un_i \un\bigr)\bigr)_j \,dt   .   \llabel{i 8wx n6I W8j c6 8ANB wz8f 4gWowk mZ P Wlw fKp M1f pd o0yT RIKH MDgTl3 BU B Wr6 vHU zFZ bq xnwK kdmJ 3lXzIw kw 7 Jku JcC kgv FZ 3lSo 0ljV Ku9Syb y4 6 zDj M6R XZI DP pHqE fkHt 9SVnVt Wd y YNw dmM m7S Pw mqhO 6FX8 tzwYaM vj z pBS NJ1 z36 89 00v2 i4y2 wQjZhw wF U jq0 UNm k8J 8d OOG3 QlDz p8AWpr uu 4 D9V Rlp VVz QQ g1ca Eqev P0sFPH cw t KI3 Z6n Y79 iQ abga 0i9m RVGbvl TA g V6P UV8 Eup PQ 6xvG bcn7 dQjV7C kw 5 7NP WUy 9Xn wF 9ele bZ8U YJDx3x CB Y CId PCE 2D8 eP 90u4 9NY9 Jxx9RI 4F e a0Q Cjs 5TL od JFph ykcz Bwoe97 Po h Tql 1LM s37 cK hsHO 5jZx qpkHtL bF D nvf Txj iyk LV hpwM qobq DM9A0f 1n 4 i5S Bc6 trq VX wgQB EgH8 lISLPL O5 2 EUv i1m yxk nL 0RBe bO2Y Ww8Jhf o1 l HlU Mie sst dW w4aS WrYv Osn5Wn 3w f wzH RHx Fg0 hK FuNV hjzX bg56HJ 9V t Uwa lOX fT8 oi FY1C sUCg CETCIv LR 0 AgT hCs 9Ta Zl 6ver 8hRt edkAUr kI n Sbc I8n yEj Zs VOSz tBbh 7WjBgf aA F t4J 6CT UCU 54 3rba vpOM yelWYW hV B RGo w5J Rh2 nM fUco BkBX UQ7UlO 5r Y fHD Mce Wou 3R oFWt baKh 70oHBZ n7 u nRp Rh3 SIp p0 Btqk 5vhX CU9BHJ Fx 7 qPx B55 a7R kO yHmS h5vw rDqt0n F7 t oPJ UGq HfY 5u At5k QLP6 ppnRjM Hk 3 HGq Z0O Bug FF xSnA SHBIEQ49}   \end{equation} For $q$ with $1/q\in[1/p, (p+1)/3p)$, we choose exponents $r$ and $l$ such that  the three conditions   \begin{equation}   \frac{1}{r} +\frac{1}{l}=\frac{1}{q}   \comma r\leq p   \commaone p<l<3p   \label{FGHVBNFJHERTWERDFSDFGSDFGSDFGSDFGDFGWERTWERT50}   \end{equation} hold.   Then,   \begin{align}   \begin{split}   &   \EE\left[\DFGDSFSDFGHSFGH_0^{S\wedge \tau_n^M }   \Vert    \mathcal{S}_n\mathcal{P}(\un_i \un)   \Vert_{q}^p \,dt   \right]   \leq    CM^p\EE   \left[\DFGDSFSDFGHSFGH_0^{S\wedge \tau_n^M}   \Vert    \un   \Vert_{l}^p    \,dt   \right]   \\&\indeq   \leq   M^p\EE\left[   \DFGDSFSDFGHSFGH_0^{S\wedge \tau_n^M }   (C_{\varepsilon}\Vert    \un   \Vert_{p}^p+   \varepsilon \Vert    \un   \Vert_{3p}^p)   \,dt
  \right].   \end{split}   \label{FGHVBNFJHERTWERDFSDFGSDFGSDFGSDFGDFGWERTWERT51}   \end{align} For the stochastic term in \eqref{FGHVBNFJHERTWERDFSDFGSDFGSDFGSDFGDFGWERTWERT48} (cf.~\eqref{FGHVBNFJHERTWERDFSDFGSDFGSDFGSDFGDFGWERTWERT39}), we also have   \begin{align}   \begin{split}   &\EE\left[   \DFGDSFSDFGHSFGH_0^{S\wedge \tau_n^M}\DFGDSFSDFGHSFGH_{\TT^\dd}   \Vert    \mathcal{S}_n\sigma(\un)   \Vert_{l^2( \mathcal{H},\mathcal{\RR}^\dd)}^p\,dx\,dt   \right]   \leq   C   \EE\DFGDSFSDFGHSFGH_0^{S\wedge \tau_n^M}   (\Vert    \un   \Vert_{p}^{p}+1)   \,dt   \end{split}   \label{FGHVBNFJHERTWERDFSDFGSDFGSDFGSDFGDFGWERTWERT52}   \end{align} due to \eqref{FGHVBNFJHERTWERDFSDFGSDFGSDFGSDFGDFGWERTWERT07} and the assumption \eqref{FGHVBNFJHERTWERDFSDFGSDFGSDFGSDFGDFGWERTWERT24}. From \eqref{FGHVBNFJHERTWERDFSDFGSDFGSDFGSDFGDFGWERTWERT39}, \eqref{FGHVBNFJHERTWERDFSDFGSDFGSDFGSDFGDFGWERTWERT51}, and \eqref{FGHVBNFJHERTWERDFSDFGSDFGSDFGSDFGDFGWERTWERT52}, we get   \begin{align}   \begin{split}   &\EE\left[\sup_{0\leq t\leq S\wedge \tau_n^M}\Vert\un(t)\Vert_p^p   +\sum_{j}\DFGDSFSDFGHSFGH_0^{S\wedge \tau_n^M} \DFGDSFSDFGHSFGH_{\TT^\dd} | \nabla (|\un_j(t,x)|^{p/2})|^2 \,dx dt\right]   \\&\indeq   \leq C   \EE\left[\Vert\uu_0\Vert_p^p+   \DFGDSFSDFGHSFGH_0^{S\wedge \tau_n^M}   (M^pC_{\varepsilon}\Vert    \un(t)   \Vert_{p}^{p}+M^p\varepsilon\Vert    \un(t)   \Vert_{3p}^{p}+1)\,dt   \right].   \end{split}\label{FGHVBNFJHERTWERDFSDFGSDFGSDFGSDFGDFGWERTWERT67}   \end{align} Note that the right hand side of \eqref{FGHVBNFJHERTWERDFSDFGSDFGSDFGSDFGDFGWERTWERT67} is finite by the definition of the stopping time, which implies finiteness of the left-hand side. Choosing $\varepsilon$ and then $S$ sufficiently small so that $CM^p\varepsilon\ll1$ and $CSM^pC_{\varepsilon}\ll1$, we arrive at \eqref{FGHVBNFJHERTWERDFSDFGSDFGSDFGSDFGDFGWERTWERT65}. The choice of $S$ depends on $M$ and the constants of embedding inequalities, but is independent of $n$. \par To obtain \eqref{FGHVBNFJHERTWERDFSDFGSDFGSDFGSDFGDFGWERTWERT66}, we  fix $j\in\{1,2,3\}$ and  utilize the trajectory It\^{o} expansion on $[0,\tau_n^M\wedge S]$,  \begin{align} \begin{split} \Vert\un_j(t)\Vert_{p}^{p} &=\Vert (\mathcal{S}_n\uu_0)_j\Vert_{p}^{p}-\frac{4(p-1)}{p}\DFGDSFSDFGHSFGH_0^t\DFGDSFSDFGHSFGH_{\TT^\dd} |\nabla (|\un_j|^{p/2})|^2\,dxdr \\&\indeq +p\DFGDSFSDFGHSFGH_0^t\DFGDSFSDFGHSFGH_{\TT^\dd} |\un_j|^{p-2}\un_j\mathcal{S}_n\bigl(\mathcal{P}\bigl(( \un\cdot \nabla)\un\bigr)_j\bigr)\,dxdr \\&\indeq + p\DFGDSFSDFGHSFGH_0^t\DFGDSFSDFGHSFGH_{\TT^\dd} |\un_j|^{p-2}\un_j \mathcal{S}_n\sigma_j(\un)\,dxd\WW_r \\&\indeq +\frac{p(p-1)}{2}\DFGDSFSDFGHSFGH_0^t\DFGDSFSDFGHSFGH_{\TT^\dd} |\un_j|^{p-2}\Vert \mathcal{S}_n\sigma_j(\un)\Vert_{l^2}^2\,dxdr. \end{split} \llabel{ 7agVfq wf g aAl eH9 DMn XQ QTAA QM8q z9trz8 6V R 2gO MMV uMg f6 tGLZ WEKq vkMEOg Uz M xgN 4Cb Q8f WY 9Tk7 3Gg9 0jy9dJ bO v ddV Zmq Jjb 5q Q5BS Ffl2 tNPRC8 6t I 0PI dLD UqX KO 1ulg XjPV lfDFkF h4 2 W0j wkk H8d xI kjy6 GDge M9mbTY tU S 4lt yAV uor 6w 7Inw Ch6G G9Km3Y oz b uVq tsX TNZ aq mwkz oKxE 9O0QBQ Xh x N5L qr6 x7S xm vRwT SBGJ Y5uo5w SN G p3h Ccf QNa fX Wjxe AFyC xUfM8c 0k K kwg psv wVe 4t FsGU IzoW FYfnQA UT 9 xcl Tfi mLC JR XFAm He7V bYOaFB Pj j eF6 xI3 CzO Vv imZ3 2pt5 uveTrh U6 y 8wj wAy IU3 G1 5HMy bdau GckOFn q6 a 5Ha R4D Ooj rN Ajdh SmhO tphQpc 9j X X2u 5rw PHz W0 32fi 2bz1 60Ka4F Dj d 1yV FSM TzS vF 1YkR zdzb YbI0qj KM N XBF tXo CZd j9 jD5A dSrN BdunlT DI a A4U jYS x6D K1 X16i 3yiQ uq4zoo Hv H qNg T2V kWG BV A4qe o8HH 70FflA qT D BKi 461 GvM gz d7Wr iqtF q24GYc yi f YkW Hv7 EI0 aq 5JKl fNDC NmWom3 Vy X JsN t4W P8y Gg AoAT OkVW Z4ODLt kz a 9Pa dGC GQ2 FC H6EQ ppks xFKMWA fY 0 Jda SYg o7h hG wHtt bb4z 5qrcdc 9C n Amx qY6 m8u Gf 7DZQ 6FBU PPiOxg sQ 0 CZl PYP Ba7 5O iV6t ZOBp fYuNcb j4 V Upb TKX ZRJ f3 6EA0 LDgA dfdOpS bg 1 ynC PUV oRW xe WQMK Smuh 3JHqX1 5A P JJX 2v0 W6l mEQ34} \end{align} As in the proof of \cite[Theorem~4.1]{KXZ}, we apply the Poincar\'{e}-type inequality, obtaining \begin{align*} \begin{split} &\DFGDSFSDFGHSFGH_0^t\DFGDSFSDFGHSFGH_{\TT^\dd} |\un_j|^{p-2}\un_j\mathcal{S}_n\bigl(\mathcal{P}\bigl(( \un\cdot \nabla)\un\bigr)_j\bigr)\,dxdr \\&\indeq \leq \varepsilon\DFGDSFSDFGHSFGH_0^t\DFGDSFSDFGHSFGH_{\TT^\dd}|\nabla (|\un_j|^{p/2})|^2 \,dxdr+ C_{M,p,\varepsilon}\DFGDSFSDFGHSFGH_0^t \Vert\un\Vert_{p}^{p}\,dr \end{split} \end{align*} for $q$ in \eqref{FGHVBNFJHERTWERDFSDFGSDFGSDFGSDFGDFGWERTWERT50} and for an arbitrarily small $\varepsilon$. Under the assumption \eqref{FGHVBNFJHERTWERDFSDFGSDFGSDFGSDFGDFGWERTWERT24}, \begin{align*} \begin{split} &\DFGDSFSDFGHSFGH_0^t\DFGDSFSDFGHSFGH_{\TT^\dd} |\un_j|^{p-2}\Vert \mathcal{S}_n\sigma_j(\un)\Vert_{l^2}^2\,dxdr \leq C \DFGDSFSDFGHSFGH_0^t \bigl(\Vert\un\Vert_{p}^{p}+1\bigr)\,dr. \end{split} \end{align*} Choosing a sufficiently small $\varepsilon$ and summing in $j$, we arrive at  \begin{align*} \begin{split} &\Vert\un(t)\Vert_{p}^{p}+\sum_j\DFGDSFSDFGHSFGH_0^t\DFGDSFSDFGHSFGH_{\TT^\dd} |\nabla (|\un_j|^{p/2})|\,dxdr -\Vert \mathcal{S}_n\uu_0\Vert_{p}^{p} \\&\indeq \leq C_{p,M}\DFGDSFSDFGHSFGH_0^t \bigl(\Vert\un\Vert_{p}^{p}+1\bigr)\,dr + C_p\sum_j\DFGDSFSDFGHSFGH_0^t\DFGDSFSDFGHSFGH_{\TT^\dd} |\un_j|^{p-2}\un_j \mathcal{S}_n\sigma_j(\un)\,dxd\WW_r \quad{} \PP\mbox{-a.s.} \end{split} \end{align*} Recall that $M$ was set to satisfy $M\geq 2\Vert \mathcal{S}_n u_0\Vert_p$ and that $p>3$. Hence, the inequality above implies   \begin{align} \begin{split} &\PP\left[ \sup_{0\leq t\leq \tau_n^M\wedge S}\Vert u^{(n)}(t,\cdot)\Vert_p^p +\sum_{j} \DFGDSFSDFGHSFGH_0^{\tau_n^M\wedge S}  \DFGDSFSDFGHSFGH_{\TT^3} | \nabla (|u^{(n)}_j(t,x)|^{p/2})|^2 \,dx dt  \geq  M^p \right] \\&\indeq \leq  \PP\left[ \DFGDSFSDFGHSFGH_0^{\tau_n^M\wedge S} \bigl(\Vert\un\Vert_{p}^{p}+1\bigr)\,dr \geq \frac{M^p}{3C_{p,M}} \right] \\&\indeq\indeq +
\PP\left[ \sum_j\sup_{0\leq t\leq \tau_n^M\wedge S} \left\vert \DFGDSFSDFGHSFGH_0^t\DFGDSFSDFGHSFGH_{\TT^\dd} |\un_j|^{p-2}\un_j \mathcal{S}_n\sigma_j(\un)\,dxd\WW_r\right\vert  \geq \frac{M^p}{3C_{p}} \right]. \end{split}\label{Cheb} \end{align} Clearly, \begin{equation*} \EE\left[ \DFGDSFSDFGHSFGH_0^{\tau_n^M\wedge S} \bigl(\Vert\un\Vert_{p}^{p}+1\bigr)\,dr \right]\leq (M^p+1)S. \end{equation*} Also, by the BDG inequality and Minkowski's integral inequality,   \begin{align*} \begin{split} &\EE\left[ \sup_{0\leq t\leq \tau_n^M\wedge S}\left\vert \DFGDSFSDFGHSFGH_0^t\DFGDSFSDFGHSFGH_{\TT^\dd} |\un_j|^{p-2}\un_j \mathcal{S}_n\sigma_j(\un)\,dxd\WW_r\right\vert \right] \\&\indeq \leq  C\EE\left[\left( \DFGDSFSDFGHSFGH_0^{\tau_n^M\wedge S}\left(\DFGDSFSDFGHSFGH_{\TT^\dd} |\un_j|^{p-1}\Vert \mathcal{S}_n\sigma_j(\un)\Vert_{l^2}\,dx\right)^2\,dr \right)^{1/2}\right] \\&\indeq \leq C\EE\left[ \sup_{0\leq t\leq\tau_n^M\wedge S}\Vert\un(t, \cdot)\Vert_{p}^{p-1} \left( \DFGDSFSDFGHSFGH_0^{\tau_n^M\wedge S} \DFGDSFSDFGHSFGH_{\TT^\dd}\Vert \mathcal{S}_n\sigma_j(\un)\Vert_{l^2}^p\,dx\,dr \right)^{1/p} \right] \\&\indeq \leq CM^{p-1}(M^p+1)^{1/p}S^{1/p} . \end{split} \end{align*} Using \eqref{Cheb} and Chebyshev's inequality, we get   \begin{align*} \begin{split} &\PP\left[ \sup_{0\leq t\leq \tau_n^M\wedge S}\Vert u^{(n)}(t,\cdot)\Vert_p^p +\sum_{j} \DFGDSFSDFGHSFGH_0^{\tau_n^M\wedge S}  \DFGDSFSDFGHSFGH_{\TT^3} | \nabla (|u^{(n)}_j(t,x)|^{p/2})|^2 \,dx dt  \geq M^p \right]\leq C_M (S+S^{1/p}). \end{split} \end{align*} Then, for the fixed $K$ and the associated $M$,   \begin{align}   \begin{split}   \lim_{S\to 0} \sup_{n}\PP\left[   \sup_{0\leq t\leq \tau_n^M\wedge S}\Vert u^{(n)}(t,\cdot)\Vert_p^p   +\sum_{j} \DFGDSFSDFGHSFGH_0^{\tau_n^M\wedge S}    \DFGDSFSDFGHSFGH_{\TT^3} | \nabla (|u^{(n)}_j(t,x)|^{p/2})|^2 \,dx dt    \geq M^p   \right]   =0,   \end{split}    \llabel{0 llC8 hlss 1NLWaN hR B Aqf Iuz kx2 sp 01oD rYsR ywFrNb z1 h Gpq 99F wUz lf cQkT sbCv GIIgmf Hh T rM1 ItD gCM zY ttQR jzFx XIgI7F MA p 1kl lwJ sGo dX AT2P goIp 9VonFk wZ V Qif q9C lAQ 4Y BwFR 4nCy RAg84M LJ u nx8 uKT F3F zl GEQt l32y 174wLX Zm 6 2xX 5xG oaC Hv gZFE myDI zj3q10 RZ r ssw ByA 2Wl OA DDDQ Vin8 PTFLGm wi 6 pgR ZQ6 A5T Ll mnFV tNiJ bnUkLy vq 9 zSB P6e JJq 7P 6RFa im6K XPWaxm 6W 7 fM8 3uK D6k Nj 7vhg 4ppZ 4ObMaS aP H 0oq xAB G8v qr qT6Q iRGH BCCN1Z bl T Y4z q8l FqL Ck ghxD UuZw 7MXCD4 ps Z cEX 9Rl Cwf 0C CG8b gFti Uv3mQe LW J oyF kv6 hcS nM mKbi QukL FpYAqo 5F j f9R RRt qS6 XW VoIY VDMl a5c7cW KJ L Uqc vti IOe VC U7xJ dC5W 5bk3fQ by Z jtU Dme gbg I1 79dl U3u3 cvWoAI ow b EZ0 xP2 FBM Sw azV1 XfzV i97mmy 5s T JK0 hz9 O6p Da Gcty tmHT DYxTUB AL N vQe fRQ uF2 Oy okVs LJwd qgDhTT Je R 7Cu Pcz NLV j1 HKml 8mwL Fr8Gz6 6n 4 uA9 YTt 9oi JG clm0 EckA 9zkElO B9 J s7G fwh qyg lc 2RQ9 d52a YQvC8A rK 7 aCL mEN PYd 27 XImG C6L9 gOfyL0 5H M tgR 65l BCs WG wFKG BIQi IRBiT9 5N 7 8wn cbk 7EF ei BRB2 16Si HoHJSk Ng x qup JmZ 1px Eb Wcwi JX5N fiYPGD 6u W sXT P94 uaF VD ZuhJ H2d0 PLOY24 3x M K4EQ73}   \end{align} concluding the proof. \end{proof}    \par We next show that $\{\un\}$ is Cauchy in $L^p_{\omega}L^{\infty}_t L_x^p\cap L^p_{\omega}L^{p}_t L_x^{3p}$ within a prescribed stopping time.  \par \cole \begin{Lemma} \label{L04} Let $p>3$ and $K\geq1$.  Assume that $\nabla\cdot \uu_0=0$, $\DFGDSFSDFGHSFGH_{\TT^3} u_0=0$, and $\Vert u_0\Vert_p\leq K~a.s$. Then there exist $M>K$ and a positive constant $S$ depending on $M$ such that   \begin{align}   \begin{split}   \lim_{m\to\infty} \sup_{n>m}\EE\left[   \sup_{0\leq t\leq \tau_n^M\wedge \tau_m^M\wedge S}\Vert u^{(n)}(t)-u^{(m)}(t)\Vert_p^p   +\DFGDSFSDFGHSFGH_0^{\tau_n^M\wedge \tau_m^M\wedge S}    \sum_{j}    \DFGDSFSDFGHSFGH_{\TT^3} | \nabla (|u^{(n)}_j-u^{(m)}_j|^{p/2})|^2 \,dx\,dt   \right]   =0. \label{FGHVBNFJHERTWERDFSDFGSDFGSDFGSDFGDFGWERTWERT74}   \end{split}   \end{align} \end{Lemma}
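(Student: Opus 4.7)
The plan is to derive an SPDE for the difference $\wnm(t) = \un(t) - \um(t)$ (taking $n > m$ without loss of generality), apply Lemma~\ref{T01} to it on $[0,\tau_n^M\wedge\tau_m^M\wedge S]$, and close the resulting inequality by choosing $S$ small. Subtracting the equations \eqref{FGHVBNFJHERTWERDFSDFGSDFGSDFGSDFGDFGWERTWERT44} for $n$ and $m$ yields
\begin{equation*}
d\wnm = \Delta\wnm\,dt + F^{(n,m)}\,dt + G^{(n,m)}\,d\WW, \qquad \wnm(0) = (\mathcal{S}_n-\mathcal{S}_m)\uu_0,
\end{equation*}
where
\begin{equation*}
F^{(n,m)} = -\mathcal{S}_n\mathcal{P}\bigl((\un\cdot\nabla)\un\bigr) + \mathcal{S}_m\mathcal{P}\bigl((\um\cdot\nabla)\um\bigr), \quad G^{(n,m)} = \mathcal{S}_n\sigma(\un) - \mathcal{S}_m\sigma(\um).
\end{equation*}
Observe that $\EE\Vert\wnm(0)\Vert_{p}^p\to 0$ as $m\to\infty$ by Lemma~\ref{L01} combined with dominated convergence, since $\uu_0\in L^p(\Omega;L^p)$.

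Following the strategy of Lemma~\ref{T002}, I would split each forcing term into a \emph{Lipschitz part} involving $\wnm$ and a \emph{Galerkin remainder}. Using $\nabla\cdot\un = \nabla\cdot\um = 0$ to write the bilinear terms in divergence form, one gets
\begin{equation*}
F^{(n,m)} = -\mathcal{S}_n\mathcal{P}\,\nabla\!\cdot\!\bigl(\wnm\otimes\un + \um\otimes\wnm\bigr) - (\mathcal{S}_n-\mathcal{S}_m)\mathcal{P}\,\nabla\!\cdot\!\bigl(\um\otimes\um\bigr),
\end{equation*}
and analogously $G^{(n,m)} = \mathcal{S}_n(\sigma(\un)-\sigma(\um)) + (\mathcal{S}_n-\mathcal{S}_m)\sigma(\um)$. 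Choose $q = 3p/4$, which lies in $(3p/(p+1),p]$ thanks to $p>3$ and satisfies $1/q = 1/p + 1/(3p)$. H\"older's inequality then gives
\begin{equation*}
\Vert\wnm\otimes\un + \um\otimes\wnm\Vert_{L^q} \leq C\Vert\wnm\Vert_p\bigl(\Vert\un\Vert_{3p}+\Vert\um\Vert_{3p}\bigr),
\end{equation*}
so the Lipschitz part of $F^{(n,m)}$ is bounded in $W^{-1,q}$ by $\Vert\wnm\Vert_p$ times factors that are $L^p$-integrable in time up to $\tau_n^M\wedge\tau_m^M$ by \eqref{FGHVBNFJHERTWERDFSDFGSDFGSDFGSDFGDFGWERTWERT64}. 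For the Lipschitz part of $G^{(n,m)}$, Lemma~\ref{L01} together with assumption~\eqref{FGHVBNFJHERTWERDFSDFGSDFGSDFGSDFGDFGWERTWERT25} yields $\Vert\mathcal{S}_n(\sigma(\un)-\sigma(\um))\Vert_{\mathbb{L}^p}\leq C\Vert\wnm\Vert_p$. Plugging these into \eqref{FGHVBNFJHERTWERDFSDFGSDFGSDFGSDFGDFGWERTWERT39} and arguing as in \eqref{FGHVBNFJHERTWERDFSDFGSDFGSDFGSDFGDFGWERTWERT67} (using \eqref{GN} to swallow the $L^{3p}$ contribution of $\wnm$ into the dissipation term on the left), these Lipschitz contributions are absorbed provided $S$ is small enough in terms of $M$.

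The main obstacle is the control of the Galerkin remainders $(\mathcal{S}_n-\mathcal{S}_m)\mathcal{P}\nabla\!\cdot(\um\otimes\um)$ and $(\mathcal{S}_n-\mathcal{S}_m)\sigma(\um)$ in the relevant norms in a way that vanishes as $\bar m\to\infty$. Since $(\mathcal{S}_n-\mathcal{S}_m)$ commutes with $\nabla$, the bilinear remainder in $W^{-1,q}$ reduces to estimating $\Vert(\mathcal{S}_n-\mathcal{S}_m)\mathcal{P}(\um\otimes\um)\Vert_{L^q}$. Lemma~\ref{T02} provides the smallness factor $\bar m^{-\alpha}$, but only at the price of the gradient norm $\Vert\nabla(\um\otimes\um)\Vert_{L^q}$, which is not directly controlled for $q=3p/4$. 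I would handle this by interpolating between the $L^2$-level estimate (where $\alpha=1$ and the gradient can be bounded via Lemma~\ref{L001} together with the identity $\nabla(|u|^{p/2})=(p/2)|u|^{(p-2)/2}\nabla u$ and \eqref{GN}) and an $L^r$-level estimate for suitably large $r$, obtained from the Calder\'on--Zygmund bound in Lemma~\ref{L01} and the stopping-time control of $\Vert\um\Vert_{3p}$. An analogous interpolation, invoking assumption~\eqref{FGHVBNFJHERTWERDFSDFGSDFGSDFGSDFGDFGWERTWERT22} for the $L^2$ gradient of $\sigma(\um)$, handles the noise remainder. Once both remainders are shown to be $o_{m\to\infty}(1)$, feeding everything back into Lemma~\ref{T01}, choosing $S$ small to absorb the Lipschitz contributions, and sending $m\to\infty$ yields~\eqref{FGHVBNFJHERTWERDFSDFGSDFGSDFGSDFGDFGWERTWERT74}.
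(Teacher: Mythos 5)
Your proposal follows the same architecture as the paper's proof: subtract the two Galerkin equations, split the drift and the noise into a Lipschitz part in $\wnm=\un-\um$ plus a remainder carrying $\snm=\mathcal{S}_n-\mathcal{S}_m$, feed the pieces into Lemma~\ref{T01}, absorb the Lipschitz contributions for small $S$, and extract decay in $m$ from the remainders via Lemma~\ref{T02}. Two of your concrete steps, however, would fail as written.

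First, the H\"older split $q=3p/4$ with $1/q=1/p+1/(3p)$, which places $\wnm$ in $L^p$ and $\un,\um$ in $L^{3p}$, produces after integration in time a term of the form $\EE\bigl[\sup_t\Vert\wnm\Vert_p^p\int_0^{\tau\wedge S}(\Vert\un\Vert_{3p}^p+\Vert\um\Vert_{3p}^p)\,dt\bigr]$. The time integral is bounded by $CM^p$ by the stopping time \eqref{FGHVBNFJHERTWERDFSDFGSDFGSDFGSDFGDFGWERTWERT64}, but it is neither small nor deterministic, so it can be neither absorbed into the left side of \eqref{FGHVBNFJHERTWERDFSDFGSDFGSDFGSDFGDFGWERTWERT39} nor handled by Gr\"onwall after taking expectations; moreover, with this split there is no $L^{3p}$ norm of $\wnm$ left over to ``swallow into the dissipation,'' contrary to what you assert. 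The paper instead takes $1/q=1/r+1/l$ with $r\le p$ and $p<l<3p$, puts $\un,\um$ in $L^p$ (hence bounded by $M$ deterministically up to the stopping time) and $\wnm$ in $L^l$, and interpolates $\Vert\wnm\Vert_l^p\le\varepsilon\Vert\wnm\Vert_{3p}^p+C_\varepsilon\Vert\wnm\Vert_p^p$; the first piece is absorbed by the dissipation through \eqref{GN} after choosing $\varepsilon M^p\ll1$, and the second by choosing $S$ small, cf.~\eqref{FGHVBNFJHERTWERDFSDFGSDFGSDFGSDFGDFGWERTWERT68}.

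Second, your ``$L^2$-level'' endpoint for the bilinear Galerkin remainder would require $\Vert\nabla(\um\otimes\um)\Vert_2$, which by H\"older needs $\Vert\um\Vert_\infty\Vert\nabla\um\Vert_2$ and is not controlled by any available bound. The paper interpolates $L^{q}$ between $L^{1+\delta}$ and $L^{q+\delta}$, applies Lemma~\ref{T02} only at the $L^{1+\delta}$ level, where $\Vert\nabla(\un_i\un)\Vert_{1+\delta}\le\Vert\nabla\un\Vert_2\Vert\un\Vert_{\tilde l}$ with $\tilde l>2$ close to $2$, and chooses $\delta$ so small that the resulting power $\theta p$ of $\Vert\nabla\un\Vert_2$ stays below $2$ and is therefore integrable in time by \eqref{FGHVBNFJHERTWERDFSDFGSDFGSDFGSDFGDFGWERTWERT63}. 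Your treatment of the noise remainder via assumption \eqref{FGHVBNFJHERTWERDFSDFGSDFGSDFGSDFGDFGWERTWERT22} and an interpolation in $x$ does match the paper. So the skeleton is right, but the two exponent choices above must be corrected before the absorption step and the $o_{m\to\infty}(1)$ claims go through.
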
 \colb \par \begin{proof}[Proof of Lemma~\ref{L04}] We fix $M>0$ as in the proof of Lemma~\ref{T002}. Then for every $S>0$, $\tau_{nm}=\tau_n^M\wedge \tau_m^M\wedge S$ is a  positive stopping time almost surely. Denote $\wnm=\un-\um$ and $\snm=\mathcal{S}_n-\mathcal{S}_m$. Clearly on $[0,\tau_{nm}]$, the difference $\wnm$ satisfies   \begin{align}    \begin{split}    &d\wnm    -\Delta \wnm\,dt     =    \mathcal{S}_m\mathcal{P}\bigl(( \um\cdot \nabla)\um\bigr)-\mathcal{S}_n\mathcal{P}\bigl(( \un\cdot \nabla)\un\bigr)\,dt   \\&\indeq\indeq\indeq\indeq\indeq\indeq     +\bigl(            \mathcal{S}_n\sigma(\un)-\mathcal{S}_m\sigma(\um)      \bigr)\,d\WW_t,     \\&    \nabla\cdot \wnm = 0,    \\&    \wnm( 0)= \snm\uu_0 ( x)\quad{}\mbox{a.s.}    \end{split}    \llabel{7 VP6 FTy T3 5zpL xRC6 tN89as 3k u 8eG rdM KWo MI U946 FBjk sOTe0U xZ D 4av bTw 5mQ 3R y9Af JFjP gvLFKz 0o l fZd j3O 07E av pWfb M3rB GSyOiu xp I 4o8 2JJ 42X 1G Iux8 QFh3 PhRtY9 vj i SL6 x76 W9y 2Z z3YA SGRM p7kDhr gm a 8fW GG0 qKL sO 5oQr 42t1 jP1crM 2f C lRb ETd qra 5l VG1l Kitb XqbdPK ca U V0l v4L alo 8V TXcl aUqh 5GWCzA nR n lNN cmw aF8 Er bwX3 2rji Hleb4g XS j LRO JgG 2yb 8O CAxN 4uy4 RsLQjD 7U 7 enw cYC nZx iK dju7 4vpj BKKjRR l3 6 kXX zvn X2J rD 8aPD UWGs tgb8CT WY n HRs 6y6 JCp 8L x1jz CI1m tG26y5 zr J 1nF hX6 7wC zq F8uZ QIS0 dnYxPe XD y jBz 1aY wzD Xa xaMI ZzJ3 C3QRra hp w 8sW Lxr AsS qZ P5Wv v1QF 7JPAVQ wu W u69 YLw NHU PJ 0wjs 7RSi VaPrEG gx Y aVm Sk3 Yo1 wL n0q0 PVeX rzoCIH 7v x q5z tOm q6m p4 drAp dzhw SOlRPD ps C lr8 FoZ UG7 vD UYhb ScJ6 gJb8Q8 em G 2JG 9Oj a83 ow Ywjo zLa3 DB500s iG j EHo lPu qe4 p7 T1kQ JmU6 cHnOo2 9o r oOz Ta3 j31 n8 mDL7 CIvC pKZUs0 jV r b7v HIH 7NT tY Y7JK vVdG LhA1ON CW o QW1 fvj mlH 7l SlIm 8T1Q SdUWhT iM P KDZ mm4 V7o fR W1dn lqg0 Ah1QRj dt K ZVz EBN E1e Xi RRSL LQPE SEDeXb iM M Ffx C5F I1z vi yNsY HPsG xfGiIu hD P Di0 OIH uBT TH OCHy CTkA BxuEQ54}   \end{align} We rewrite the first equation as   \begin{align}    \begin{split}    &d{w}_j^{(n,m)}    -\Delta {w}_j^{(n,m)}\,dt      = \sum_{i}\DFGDSFSDFGHSFGI_{i}    f_{ij} \,dt     + g_j      \,d\WW_t    \comma j=1,2,3    \,,   \end{split}    \llabel{Cjg OZ s 965 wfe Fwv fR pNLL T3Ev gKgkO9 jy y vot RRl pDT dn 9H5Z nqwW r4OUkI lx t sk0 RZd ODn so Yid6 ctgw wQrxQk 1S 8 ajp PiZ Jlp 5p IAT1 t482 KxtvQ6 D1 T VzQ 7F3 xoz 6H w2ph WDlC Jg7VcE ix 6 XFI dlO lcN bg ODKp 86tC HVGrzE cV n Bk9 9sq 5XG d1 DNFA Negg JYjfBW jA b JSc hyE uVl EN awP0 DWoZ WKuP4I Pt v Zbm nRL 047 2K 3bBQ IH5S pPxtXy 5N J joW ceA 7Fe T7 Iwpi vQdq LaeZE0 Qf i MW1 Koz kdU tR sGH6 ryob MpDbfL t0 Z 2FA XbR 3QQ wu Iizg ZFQ4 Gh4lY5 pt 9 RMT ieq BIk dX I979 BGU2 yYtJSa nO M sDL Wyd CQf ol xJWb bIdb EggZLB Kb F mKX oRM cUy M8 NlGn WyuE RUtbAs 4Z R PHd IWt lbJ Rt Qwod dmlZ hI3I8A 9K 8 Syf lGz cVj Cq GkZn aZrx HNxIcM ae G QdX XxG HFi 6A eYBA lo4Q 9HZIjJ jt O hl4 VLm Vvc ph mMES M8lt xHQQUH jJ h Yyf 5Nd c0i 8m HOTN S7yx 5hNrJC yJ 1 ZFj 4Qe Iom 7w czw9 8Bn6 SxxoqP tn X p4F yiE b2M Cy j2AH aB8F ejdIRh qQ V fR8 rEt z0m q5 4IZt bSlX dBmEvC uv A f5b YxZ 3LE sJ YEX8 eNmo tV2IHl hJ E 70c s45 KVw JR 1riF MPEs P3srHa 8p q wVN AHu soh YI rkNw ekfR bDVLm2 ax u 6ca KkT Xrg Bg nQhU A1z8 X6Mtqv ks U fAF VLg Tmq Pn trgI ggjf JfMGfC uB y BS7 njW fYR Nh pHsj FCzM 4f6cRD gj P Zkb SUH QBn zQ wEEQ55}   \end{align} where    \begin{align}    \begin{split}    f_{ij} & = \mathcal{S}_m(\mathcal{P}\um_i\um)_j-\mathcal{S}_n(\mathcal{P}\un_i\un)_j    \\&
   =    -\mathcal{S}_m(\mathcal{P}\um_i\wnm)_j-\mathcal{S}_m(\mathcal{P}\wnm_i\un)_j-  \snm(\mathcal{P}\un_i\un)_j    \\ &    =     f_{ij}^{(1)}+f_{ij}^{(2)}+f_{ij}^{(3)}    \end{split}    \llabel{nS 9CxS fn00xm Af w lTv 4HI ZIZ Ay XIs4 hPOP jQ3v93 iT L 0Jt NJ8 baB BW cY18 vifU iGKvSQ 4g E kZ1 0yS 5lX Cw I4oX 2gPB isFp7T jK u pgV n5o i4u xK t2QP 4kbr ChS5Zn uW X Wep 0mO jW1 r2 IaXv Hle8 ksF2XQ 52 9 gTL s3u vAO f6 4HOV Iqrb LoG5I2 n0 X skv cKY FIV 8y P9tf MEVP R7F0ip Da q wgQ xro 5Et IW r3tE aSs5 CjzfRR AL g vmy MhI ztV Kj StP7 44RC 0TTPQp n8 g LVt zpL zEQ e2 Rck9 WuM7 XHGA7O 7K G wfm ZHL hJR NU DEQe Brqf KIt0Y4 RW 4 9GK EHY ptg LH 4F8r ZfYC vcf1pO yj k 8iT ES0 ujR vF pipc wIvL DgikPu qq k 9RE dH9 YjR UM kr9b yFJK LBex0S gD J 2gB IeC X2C UZ yyRt GNY3 eGOaDp 3m w QyV 1Aj tGL gS C1dD pQCB cocMSM 4j q bSW bvx 6aS nu MtD0 5qpw NDlW0t Z1 c bjz wU5 bUd CG AghC w0nI CDFKHR kp h btA 6nY ld6 c5 TSkD q3Qx o2jhDx Qb m b8n Pq3 zNZ QF JJyu Vm1C 6rzRDC B1 m eQy 4Tt Yr5 jQ VWoO fbrY Q6qakZ ep H b2b 5w4 KN3 mE HtQK AXsI ycbaky ID 9 O8Y CmR lEW 7f GISs 6xaz bM6PSB N2 B jtb 65z z2N uY o4kU lpIq JVBC4D zu Z ZN6 Zkz 0oo mm nswe bstF mlxkKE QE L 6bs oYz xx0 8I Q5Ma 7Inf dXLQ9j eH S Tmi gtt k4v P7 778H p1o6 7atRbf cr S 2CW zwQ 9j0 Rj r0VL 9vlv kkk6J9 bM 1 Xgi Yla y8Z Eq 39Z5 3jRn Xh5mKP Pa 5 tFw 7EEQ75}   \end{align} and    \begin{align}    g_{j} &= \mathcal{S}_n\bigl(\sigma_j(\un )-\sigma_j(\um )\bigr)+\snm\sigma_j(\um )=g_j^{(1)}+g_{j}^{(2)}    .    \llabel{0 nE7 Cu FIoV lFxg uxB1hq lH e OLd b7R Kfl 0S KJiY ekpv RSYnNF f7 U VOW Bvw pN9 mt gGwh 2NJC Y53IdJ XP p YAZ 1B1 AgS xn 61oQ Vtg7 W7QcPC 42 e cSA 5jG 4K5 H1 tQs6 TNph OKTBId Gk F SGm V0k zAx av Qzje XGbi Sjg3kY Z5 L xzF 3JN Hkn rm y4sm J70w hEtBeX kS T WEu jcA uS0 Nk Hloa 7wYg Ma5j8g 4g i 7WZ 77D s5M ZZ MtN5 iJEa CfHJ0s D6 z VuX 06B P99 Fg a9Gg YMv6 YFVOBE Ry 3 Xw2 SBY ZDx ix xWHr rlxj KA3fok Ph 9 Y75 8fG XEh gb Bw82 C4JC StUeoz Jf I uGj Ppw p7U xC E5ah G5EG JF3nRL M8 C Qc0 0Tc mXI SI yZNJ WKMI zkF5u1 nv D 8GW YqB t2l Nx dvzb Xj00 EEpUTc w3 z vyf ab6 yQo Rj HWRF JzPB uZ61G8 w0 S Abz pNL IVj WH kWfj ylXj 6VZvjs Tw O 3Uz Bos Q7e rX yGsd vcKr YzZGQe AM 1 u1T Nky bHc U7 1Kmp yaht wKEj7O u0 A 7ep b7v 4Fd qS AD7c 02cG vsiW44 4p F eh8 Odj wM7 ol sSQo eyZX ota8wX r6 N SG2 sFo GBe l3 PvMo Ggam q3Ykaa tL i dTQ 84L YKF fA F15v lZae TTxvru 2x l M2g FBb V80 UJ Qvke bsTq FRfmCS Ve 3 4YV HOu Kok FX YI2M TZj8 BZX0Eu D1 d Imo cM9 3Nj ZP lPHq Ell4 z66IvF 3T O Mb7 xuV RYj lV EBGe PNUg LqSd4O YN e Xud aDQ 6Bj KU rIpc r5n8 QTNztB ho 3 LC3 rc3 0it 5C N2Tm N88X YeTdqT LP l S97 uLM w0N As MphO uPNi sXNIlW EQ76}   \end{align} Now, choosing the exponents as in \eqref{FGHVBNFJHERTWERDFSDFGSDFGSDFGSDFGDFGWERTWERT50}--\eqref{FGHVBNFJHERTWERDFSDFGSDFGSDFGSDFGDFGWERTWERT51}, we obtain  \begin{align} \begin{split} &\EE\left[\DFGDSFSDFGHSFGH_0^{\tau_{nm}}(\Vert  f_{ij}^{(1)} \Vert_{q}^p+ \Vert  f_{ij}^{(2)} \Vert_{q}^p )\,dt \right] \leq  \EE\left[\DFGDSFSDFGHSFGH_0^{\tau_{nm}} \Vert  \wnm  \Vert_{l}^p  (\Vert  \um_i \Vert_{p}^p  +\Vert  \un_i \Vert_{p}^p)  \,dt \right] \\ &\indeq \leq  C_M\EE\left[ \DFGDSFSDFGHSFGH_0^{\tau_{nm}} (\varepsilon\Vert  \wnm  \Vert_{3p}^p+C_{\varepsilon} \Vert  \wnm  \Vert_{p}^p ) \,dt \right].    \end{split} \label{FGHVBNFJHERTWERDFSDFGSDFGSDFGSDFGDFGWERTWERT68} \end{align} Recall that $1/q\in[1/p, (p+1)/3p)$. For $0<\delta\ll1$ and $0<\theta\ll1$ such that $q+\delta<4p/3$  and  $1/q=\theta/(1+\delta)+(1-\theta)/(q+\delta)$, we have  \def\minn#1{#1}  \begin{align*} \begin{split} &\EE\left[\DFGDSFSDFGHSFGH_0^{\tau_{nm}}\Vert  f_{ij}^{(3)} \Vert_{q}^p \,dt \right] \leq C \EE\left[\DFGDSFSDFGHSFGH_0^{\tau_{nm}} \Vert  \snm\un_i\un \Vert_{1+\delta}^{\theta p} \Vert  \snm\un_i\un \Vert_{q+\delta}^{(1-\theta)p}   \,dt \right] \\ &\indeq  \leq  \frac{C}{\minn{m}^{\theta\alpha p}\wedge \minn{n}^{\theta\alpha p}} \EE\left[\DFGDSFSDFGHSFGH_0^{\tau_{nm}} \Vert  \nabla(\un_i\un) \Vert_{1+\delta}^{\theta p}  \Vert  \un_i\un \Vert_{q+\delta}^{(1-\theta)p}  \,dt \right] \\ &\indeq  \leq  \frac{C}{\minn{m}^{\theta\alpha p}\wedge \minn{n}^{\theta\alpha p}} \EE\left[\DFGDSFSDFGHSFGH_0^{\tau_{nm}} \Vert  \nabla\un \Vert_{2}^{\theta p}  \Vert  \un \Vert_{\tilde{l}}^{\theta p}  \Vert  \un \Vert_{2(q+\delta)}^{2(1-\theta) p}  \,dt \right] , \end{split} \end{align*}  where $1/(1+\delta)=1/2+1/\tilde{l}$ and $\alpha$ is determined by Lemma~\ref{T02}.  Note that $\delta$ can be sufficiently small so that $\theta p<2$ and $2<\tilde{l}<p$. By \eqref{FGHVBNFJHERTWERDFSDFGSDFGSDFGSDFGDFGWERTWERT63} and \eqref{FGHVBNFJHERTWERDFSDFGSDFGSDFGSDFGDFGWERTWERT65}, we conclude   \begin{align}   \begin{split}    &\EE\left[\DFGDSFSDFGHSFGH_0^{\tau_{nm}}\Vert     f_{ij}^{(3)}    \Vert_{q}^p \,dt    \right]    \leq    \frac{C_M}{\minn{m}^{\theta\alpha p}\wedge \minn{n}^{\theta\alpha p}}    \EE\left[\DFGDSFSDFGHSFGH_0^{\tau_{nm}}    (\Vert     \nabla\un    \Vert_{2}^{2}     +    \Vert     \un    \Vert_{3p}^{ p})    \,dt    \right]    \leq     \frac{C_M}{\minn{m}^{\theta\alpha p}\wedge \minn{n}^{\theta\alpha p}}    .   \end{split}   \llabel{fX B Gc2 hxy kg5 0Q TN75 t5JN wZR3NH 1M n VRZ j2P rUY ve HPEl jGaT Ix4sCF zK B 0qp 3Pl eK6 8p 85w4 4l5z Zl07br v6 1 Kki AuT SA5 dk wYS3 F3YF 3e1xKE JW o AvV OZV bwN Yg F7CK bSi9 2R0rlW h2 a khC oEp pr6 O2 PZJD ZN8Z ZD4IhH PT M vSD TgO y1l Z0 Y86n 9aMg kWdeuO Zj O i2F g3z iYa SR Cjlz XdQK bcnb5p KT q rJp 1P6 oGy xc 9vZZ RZeF r5TsSZ zG l 7HW uIG M0y Re YDw3 lMux gAdF6d pp 8 ZVR cl7 uqH 8O BMbz L6dK BflWCW dl V hyc V5n Epv 2J SkD0 ccMp oIR38Q pe Z j9j 0Zo Pmq XR TxBs 8w9Q 5epR3t N5 j bvb rbS K7U 4W 4PJ0 ovnB 0opRpC YN P so8 34P wtS Rq vir4 DRqu jaJq32 QU T G1P gbp 6nJ M2 CUnE NdJC r3ZGBH Eg B tds Td8 4gM 22 gKBN 7Qnm RtJgKU IG E eKx 64y AGK Ge zeJN mpeQ kLR389 HH 9 fXL BcE 6T4 Gj VZLI dLQI iQtkBk 9G 9 FzH WIG m91 M7 SW02 9tzN UX3HLr OU t vG5 QZn Dqy M6 ESTx foUV ylEQ99 nT C SkH A8s fxr ON eFp9 QLDn hLBPib iu j cJc 8Qz Z2K zD oDHg 252c lhDcaQ continuous n xG9 aJl jFq mA DsfD FA0w DO3CZr Q1 a 2IG tqK bjc iq zRSd 0fjS JA1rsi e9 i qOr 5xg Vlj y6 afNu ooOy IVlT21 vJ W fKU deL bcq 1M wF9N R9xQ np6Tqg El S k50 p43 Hsd Cl 7VKk Zd12 Ijx43v I7 2 QyQ vUm 77B V2 3a6W h6IX dP9n67 St l Zll bRi DyG NEQ57}   \end{align} Also, by Lemma~\ref{L01} and the assumption \eqref{FGHVBNFJHERTWERDFSDFGSDFGSDFGSDFGDFGWERTWERT25}, we have   \begin{align}   \begin{split}   \EE\left[   \DFGDSFSDFGHSFGH_0^{\tau_{nm}}\DFGDSFSDFGHSFGH_{\TT^\dd}\Vert {g_1}\Vert_{l^2}^p\,dx\,dt   \right]   &\leq   C \,S\,   \EE\left[\sup_{s\in[0,\tau_{nm}]} \Vert \wnm\Vert_{p}^{p}    \right]. \end{split} \label{FGHVBNFJHERTWERDFSDFGSDFGSDFGSDFGDFGWERTWERT78} \end{align} Applying Lemmas~\ref{T02} and~\ref{L01}, together with Fubini's theorem, we obtain  \begin{align} \begin{split}   &\EE\left[   \DFGDSFSDFGHSFGH_0^{\tau_{nm}}\DFGDSFSDFGHSFGH_{\TT^\dd}\Vert {g_2}\Vert_{l^2}^p\,dx\,dt   \right]   \leq C\EE\left[   \DFGDSFSDFGHSFGH_0^{\tau_{nm}}   \left(\DFGDSFSDFGHSFGH_{\TT^\dd}\Vert {g_2}\Vert_{l^2}^2\,dx   \right)^{1/2}   \left(\DFGDSFSDFGHSFGH_{\TT^\dd}\Vert {g_2}\Vert_{l^2}^{2(p-1)}\,dx   \right)^{1/2}   \,dt   \right]    \\ & \indeq\indeq    \leq\frac{C}{\minn{m}\wedge \minn{n}}    \EE\left[   \DFGDSFSDFGHSFGH_0^{\tau_{nm}}   \Vert \nabla(\sigma(\um ))\Vert_{\mathbb{L}^2}   \Vert \sigma(\um )\Vert_{\mathbb{L}^{2(p-1)}}^{p-1}   \,dt   \right]    \\ & \indeq\indeq   \leq\frac{C}{\minn{m}\wedge \minn{n}}   \EE\left[   \DFGDSFSDFGHSFGH_0^{\tau_{nm}}   \Vert \um \Vert_{L^2}   \Vert \sigma(\um )\Vert_{\mathbb{L}^{2(p-1)}}^{p-1}   \,dt   \right]   \\ & \indeq\indeq   \leq\frac{C_{M,S}}{\minn{m}\wedge \minn{n}}   ,   \end{split}   \label{FGHVBNFJHERTWERDFSDFGSDFGSDFGSDFGDFGWERTWERT5-8}   \end{align} where the third inequality follows from the assumptions \eqref{FGHVBNFJHERTWERDFSDFGSDFGSDFGSDFGDFGWERTWERT24} and \eqref{FGHVBNFJHERTWERDFSDFGSDFGSDFGSDFGDFGWERTWERT22}. If $\varepsilon C_M\ll1$ in \eqref{FGHVBNFJHERTWERDFSDFGSDFGSDFGSDFGDFGWERTWERT68} and if $S>0$ is sufficiently small, then \eqref{FGHVBNFJHERTWERDFSDFGSDFGSDFGSDFGDFGWERTWERT68}--\eqref{FGHVBNFJHERTWERDFSDFGSDFGSDFGSDFGDFGWERTWERT5-8} lead to   \begin{align}   \begin{split}   & \EE\left[\sup_{0\leq t\leq \tau_{nm}}\Vert\wnm(t,\cdot)\Vert_p^p   +\DFGDSFSDFGHSFGH_0^{\tau_{nm}}    \sum_{j,k}    \DFGDSFSDFGHSFGH_{\TT^3} | \DFGDSFSDFGHSFGI_{k} (|\wnm_j|^{p/2})|^2 \,dx dt   \right]   \\&\indeq   \leq    C\EE\left[\Vert \snm\uu_0 ( x)\Vert_p^p   \right]   +   \frac{C_M}{\minn{m}^{\theta\alpha p}\wedge \minn{n}^{\theta\alpha p}}   +\frac{C_{M,S}}{\minn{m}\wedge \minn{n}},   \end{split}    \llabel{r 0g9S 4AHA Vga0Xo fk X FZw gGt sW2 J4 92NC 7FAd 8AVzIE 0S w EaN EI8 v9e le 8EfN Yg3u WVH3JM gi 7 vGf 4N0 akx mB AIjp x4dX lxQRGJ Ze r TMz BxY 9JA tm ZCjH 9064 Q4uzKx gm p CQg 8x0 6NY x0 2vkn EtYX 5O2vgP 3g c spG swF qhX 3a pbPW sf1Y OzHivD ia 1 eOD MIL TC2 mP ojef mEVB 9hWwMa Td I Gjm 9Pd pHV WG V4hX kfK5 Rtci05 ek z j0L 8Tm e2J PX pDI8 Ebcq V4Fdxv rH I eP8 CdO RJp Ti MVEb AunS GsUMWP ts 4 uBv 2QS iXI b7 B8zo 7bp9 voEwNR uX J 4Zx uRZ Yhc 1h 339T HRXV Fw5XVW 8g a B39 mFS v6M ze znkb LHrt Z73hUu aq L vPh gTl NnV po 1Zgg mnRA qM3X31 OR Y Sj8 Rkt S8V GO jrz1 iblt 3uOuEs 8Q 3 xJ1 cA2 NKo F8 o6U3 mW2H q5y6jp os x Jgw WZ4 Exd 79 Jvlc wauo RDCYZz mp a bV0 9jg ume bz cbug patf 9yU9iB Ey v 3Uh S79 XdI mP NEhN 64Rs 9iHQ84 7j X UCA ufF msn Uu dD4S g3FM LMWbcB Ys 4 JFy Yzl rSf nk xPjO Hhsq lbV5eB ld 5 H6A sVt rHg CN Yn5a C028 FEqoWa KS s 9uu 8xH rbn 1e RIp7 sL8J rFQJat og Z c54 yHZ vPx Pk nqRq Gw7h lG6oBk zl E dJS Eig f0Q 1B oCMa nS1u LzlQ3H nA u qHG Plc Iad FL Rkdj aLg0 VAPAn7 c8 D qoV 8bR CvO zq k5e0 Zh3t zJBWBO RS w Zs9 CgF bGo 1E FAK7 EesL XYWaOP F4 n XFo GQl h3p G7 oNtG 4mpT MwEqV4 pO 8 fMEQ79}   \end{align} which completes the proof. \end{proof} \par The following result asserts positivity of the stopping time. We fix $K>1$ and choose a constant $M>K$ that fulfills the requirements in Lemmas \ref{T002} and \ref{L04}.  \par \cole \begin{Lemma}  \label{L05}  Let $p>3$ and $K\geq1$.    Assume that $\nabla\cdot \uu_0=0$, $\DFGDSFSDFGHSFGH_{\TT^3} u_0=0$, and $\Vert u_0\Vert_p\leq K~a.s$. Then there exist $M>K$, a stopping time $\tau_M$ with $\PP(\tau_M>0)=1$, and a subsequence $\{u^{(n_k)}\}$ so that      \begin{align}  \begin{split}  \lim_{k\to\infty}   \left(  \sup_{0\leq t\leq \tau_M}\Vert u^{(n_k)}(t)-u(t)\Vert_p^p  +\DFGDSFSDFGHSFGH_0^{\tau_M}   \Vert u^{(n_k)}(t)-u(t)\Vert_{3p}^p\,dt   \right)  =0\quad{} \PP\mbox{-a.s.}    ,  \end{split}    \llabel{F jfg ktn kw IB8N P60f wfEhjA DF 3 bMq EPV 9U0 o7 fcGq UUL1 0f65lT hL W yoX N4v uSY es 96Sc 2HbJ 0hugJM eB 5 hVa EdL TXr No 2L78 fJme hCMd6L SW q ktp Mgs kNJ q6 tvZO kgp1 GBBqG4 mA 7 tMV p8F n60 El QGMx joGW CrvQUY V1 K YKL pPz Vhh uX VnWa UVqL xeS9ef sA i 7Lm HXC ARg 4Y JnvB e46D UuQYkd jd z 5Mf PLH oWI TM jUYM 7Qry u7W8Er 0O g j2f KqX Scl Gm IgqX Tam7 J8UHFq zv b Vvx Niu j6I h7 lxbJ gMQY j5qtga xb M Hwb JT2 tlB si b8i7 zj6F MTLbwJ qH V IiQ 3O0 LNn Ly pZCT VUM1 bcuVYT ej G 3bf hcX 0BV Ql 6Dc1 xiWV K4S4RW 5P y ZEV W8A Yt9 dN VSXa OkkG KiLHhz FY Y K1q NGG EEU 4F xdja S2NR REnhHm B8 V y44 6a3 VCe Ck wjCM e3DG fMiFop vl z Lp5 r0z dXr rB DZQv 9HQ7 XJMJog kJ n sDx WzI N7F Uf veeL 0ljk 83TxrJ FD T vEX LZY pEq 5e mBaw Z8VA zvvzOv CK m K2Q ngM MBA Wc UH8F jSJt hocw4l 9q J TVG sq8 yRw 5z qVSp d9Ar UfVDcD l8 B 1o5 iyU R4K Nq b84i OkIQ GIczg2 nc t txd WfL QlN ns g3BB jX2E TiPrpq ig M OSw 4Cg dGP fi G2HN ZhLe aQwyws ii A WrD jo4 LDb jB ZFDr LMuY dt6k6H n9 w p4V k7t ddF rz CKid QPfC RKUedz V8 z ISv ntB qpu 3c p5q7 J4Fg Bq59pS Md E onG 7PQ CzM cW lVR0 iNJh WHVugW PY d IMg tXB 2ZS ax azHe Wp7r fhkEQ80}    \end{align} for some adapted process $u\in L^p(\Omega, C([0,\tau_M], L^p))\cap L^p(\Omega, L^{p}([0,\tau_M], L^{3p}))$.   \end{Lemma}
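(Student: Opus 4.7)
The plan is to extract a rapidly Cauchy subsequence from Lemma~\ref{L04}, to promote the $L^p(\Omega)$-Cauchy bound to an almost-sure statement on random time intervals via Chebyshev and Borel--Cantelli, and then to confront the central difficulty: the random horizons $\tau_{n_k}^M\wedge S$ may a priori degenerate, so that $\liminf_k\tau_{n_k}^M=0$. This last point will be handled using the uniform tightness estimate~\eqref{FGHVBNFJHERTWERDFSDFGSDFGSDFGSDFGDFGWERTWERT66} from Lemma~\ref{T002} together with the probability-subspace strategy outlined in the introduction.

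Concretely, I fix $M$ and $S$ so that both Lemmas~\ref{T002} and~\ref{L04} apply, and use \eqref{FGHVBNFJHERTWERDFSDFGSDFGSDFGSDFGDFGWERTWERT74} to extract $\{n_k\}$ such that, writing $\rho_k:=\tau_{n_k}^M\wedge\tau_{n_{k+1}}^M\wedge S$,
\[
   \EE\left[\sup_{0\leq t\leq \rho_k}\Vert u^{(n_k)}-u^{(n_{k+1})}\Vert_p^p
   +\DFGDSFSDFGHSFGH_0^{\rho_k}\sum_j\DFGDSFSDFGHSFGH_{\TT^3}
      |\nabla(|u^{(n_k)}_j-u^{(n_{k+1})}_j|^{p/2})|^2\,dx\,dt\right]\leq 4^{-k}.
\]
Chebyshev then bounds the probability that the bracketed quantity exceeds $2^{-k}$ by $2^{-k}$, so Borel--Cantelli ensures this bound holds $\PP$-a.s.\ for all $k$ large. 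Applying the Gagliardo--Nirenberg inequality~\eqref{GN} to the mean-zero difference $u^{(n_k)}-u^{(n_{k+1})}$, the gradient integral also controls $\DFGDSFSDFGHSFGH_0^{\rho_k}\Vert u^{(n_k)}-u^{(n_{k+1})}\Vert_{3p}^p\,dt$, so the subsequence is pathwise Cauchy in $C([0,\rho_k], L^p)\cap L^p([0,\rho_k], L^{3p})$ almost surely.

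The main obstacle is constructing a stopping time $\tau_M$ that is $\PP$-a.s.\ positive on which to define the limit. The natural candidate is $\tau_M:=\liminf_k\tau_{n_k}^M\wedge S$, which is $\mathcal{F}_t$-measurable; the difficulty is that a summable bound on $\PP(\tau_{n_k}^M<\delta)$ is not immediately available from the uniform-in-$n$ estimate alone. To handle this, I first combine \eqref{FGHVBNFJHERTWERDFSDFGSDFGSDFGSDFGDFGWERTWERT66} with \eqref{GN} to obtain $\lim_{S'\to 0}\sup_n\PP(\tau_n^M<S')=0$, since $\{\tau_n^M<S'\}$ forces either $\sup_{s\le\tau_n^M}\Vert u^{(n)}\Vert_p^p\ge M^p/2^p$ or $\DFGDSFSDFGHSFGH_0^{\tau_n^M}\Vert\nabla(|u^{(n)}|^{p/2})\Vert_2^2\,ds\ge c M^p$. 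Then, for each $\ell\in\NNp$, I pick $S_\ell\downarrow 0$ with $\sup_n\PP(\tau_n^M<S_\ell)<2^{-\ell}$ and work on expanding probability subspaces $\Omega_\ell$ on which the subsequential stopping times are eventually bounded below by $S_\ell$, so that $\bigcup_\ell\Omega_\ell$ exhausts $\Omega$ up to a null set; on the remaining null set I set $\tau_M:=0$ and $u:=0$, patching the limits across $\{\Omega_\ell\}$ via indicator functions as announced in the introduction. Finally, the required regularity $u\in L^p(\Omega,C([0,\tau_M],L^p))\cap L^p(\Omega,L^p([0,\tau_M],L^{3p}))$ and the a.s.\ convergence in the claimed norm follow from the Cauchy property above, the uniform bound~\eqref{FGHVBNFJHERTWERDFSDFGSDFGSDFGSDFGDFGWERTWERT65} of Lemma~\ref{T002}, and Fatou's lemma.
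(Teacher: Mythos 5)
Your overall architecture matches the paper's: extract a rapidly Cauchy subsequence from \eqref{FGHVBNFJHERTWERDFSDFGSDFGSDFGSDFGDFGWERTWERT74}, upgrade to a.s.\ convergence via Chebyshev and Borel--Cantelli, and use the tightness estimate \eqref{FGHVBNFJHERTWERDFSDFGSDFGSDFGSDFGDFGWERTWERT66} together with \eqref{GN} to prevent the random horizons from degenerating. However, there is a genuine gap at the one step that is actually the heart of the lemma: your construction of the limiting stopping time. You propose $\tau_M:=\liminf_k\tau_{n_k}^M\wedge S$ and claim to control its positivity from $\lim_{S'\to0}\sup_n\PP(\tau_n^M<S')=0$ by choosing $S_\ell\downarrow0$ with $\sup_n\PP(\tau_n^M<S_\ell)<2^{-\ell}$ and passing to subspaces $\Omega_\ell$ ``on which the subsequential stopping times are eventually bounded below by $S_\ell$.'' But the event $\{\liminf_k\tau_{n_k}^M<S_\ell\}$ is contained in $\limsup_k\{\tau_{n_k}^M<S_\ell\}$, and a uniform-in-$k$ bound of $2^{-\ell}$ on each $\PP(\tau_{n_k}^M<S_\ell)$ does not control the probability of the limsup: the union bound $\sum_{k\geq K}\PP(\tau_{n_k}^M<S_\ell)$ diverges. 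So the existence of your sets $\Omega_\ell$ with the stated property is exactly what needs to be proven, and no mechanism is supplied.

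The paper's resolution, which your sketch is missing, is a comparison argument that makes the sequence of stopping times \emph{monotone} on the good events. One replaces $\tau_{n_k}^M$ by $\eta_k$, defined as in \eqref{FGHVBNFJHERTWERDFSDFGSDFGSDFGSDFGDFGWERTWERT64} but with the strictly decreasing thresholds $M/2+2^{-k}$, and defines $\Omega_N$ as the intersection over $k\geq N$ of the events where $\sup_{r\leq\eta_k\wedge\eta_{k+1}\wedge S}\Vert w^{(n_k,n_{k+1})}\Vert_p$ plus the $L^p_tL^{3p}_x$ term is below $2^{-k-2}$ (these have full probability in the union over $N$ by Borel--Cantelli). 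On $\Omega_N$, if $\eta_k<\eta_{k+1}\wedge S$ then at time $\eta_k$ the norm of $u^{(n_k)}$ has reached $M/2+2^{-k}$ while the difference is at most $2^{-k-2}$, forcing the norm of $u^{(n_{k+1})}$ above $M/2+2^{-k-1}$, contradicting $\eta_k<\eta_{k+1}$. Hence $\{\eta_k\wedge S\}$ is non-increasing on $\Omega_N$, its limit $\tau_M$ exists, and $\PP(\tau_M<\epsilon)$ is bounded by $\sup_l\PP(\eta_l\wedge S<\epsilon)$, which the tightness estimate \eqref{ee} sends to zero. Without this interlocking of the $2^{-k}$ threshold increments against the $2^{-k-2}$ difference bounds, the positivity of the limiting stopping time does not follow, so you should either reproduce this comparison or supply an equivalent device.
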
 \colb \par \begin{proof}[Proof of Lemma~\ref{L05}]  Using \eqref{GN} and \eqref{FGHVBNFJHERTWERDFSDFGSDFGSDFGSDFGDFGWERTWERT66}, there exists $M$ be sufficiently large relative to $K$ so that    \begin{align} \begin{split} \lim_{\epsilon\to 0} \sup_{n}\PP\left[ \sup_{0\leq s\leq \tau_n^M\wedge \epsilon}\Vert u^{(n)}(s,\cdot)\Vert_p +\left(\DFGDSFSDFGHSFGH_0^{\tau_n^M\wedge \epsilon}  \Vert u^{(n)}(s,\cdot)\Vert_{3p}^p \, ds\right)^{1/p} \geq \frac{M}{2} \right] =0. \end{split}\label{ee} \end{align} Fix a corresponding value for the constant $S$ in \eqref{FGHVBNFJHERTWERDFSDFGSDFGSDFGSDFGDFGWERTWERT65} and \eqref{FGHVBNFJHERTWERDFSDFGSDFGSDFGSDFGDFGWERTWERT74}. From \eqref{FGHVBNFJHERTWERDFSDFGSDFGSDFGSDFGDFGWERTWERT74} we can infer  the existence of a subsequence $\{n_k\}$ for which   \begin{align}   \begin{split}   \EE\left[\sup_{0\leq r\leq \tau_{n_k}^M\wedge \tau_{n_{k+1}}^M\wedge S}\Vert w^{(n_k, n_{k+1})}(r,\cdot)\Vert_p^p   +\DFGDSFSDFGHSFGH_0^{\tau_{n_k}^M\wedge \tau_{n_{k+1}}^M\wedge S}     \Vert w^{(n_k, n_{k+1})}(r,\cdot)\Vert_{3p}^p\,dr   \right]   \leq 4^{-kp}   .   \end{split}   \label{FGHVBNFJHERTWERDFSDFGSDFGSDFGSDFGDFGWERTWERT81}   \end{align} Now we need to prove that there exists a uniform time interval where this estimate can be applied. Inspired by \cite{GZ}, we introduce stopping times   \begin{equation}\llabel{4qr Ab J FFG 0li i9M WI l44j s9gN lu46Cf P3 H vS8 vQx Yw9 cE yGYX i3wi 41aIuU eQ X EjG 3XZ IUl 8V SPJV gCJ3 ZOliZQ LO R zOF VKq lyz 8D 4NB6 M5TQ onmBvi kY 8 8TJ ONa DfE 2u zbcv fL67 bnJUz8 Sd 7 yx5 jWr oXd Jp 0lSy mIK8 bkKzql jN n 4Kx luF hYL g0 FrO6 yRzt wFTK7Q RN 0 1O2 1Zc HNK gR M7GZ 9nB1 Etq8sq lA s fxo tsl 927 c6 Y8IY 8T4x 0DRhoh 07 1 8MZ Joo 1oe hV Lr8A EaLK hyw6Sn Dt h g2H Mt9 D1j UF 5b4w cjll AvvOSh tK 8 06u jYa 0TY O4 pcVX hkOO JVtHN9 8Q q q0J 1Hk Ncm LS 3MAp Q75A lAkdnM yJ M qAC erD l5y Py s44a 7cY7 sEp6Lq mG 3 V53 pBs 2uP NU M7pX 6sy9 5vSv7i IS 8 VGJ 08Q KhA S3 jIDN TJsf bhIiUN fe H 9Xf 8We Cxm BL gzJT IN5N LhvdBO zP m opx YqM 4Vh ky btYg a3XV TTqLyA Hy q Yqo fKP 58n 8q R9AY rRRe tBFxHG g7 p duM 8gm 1Td pl RKIW 9gi5 ZxEEAH De A sfP 5hb xAx bW CvpW k9ca qNibi5 A5 N Y5I lVA S3a hA aB8z zUTu yK55gl DL 5 XO9 CpO RXw rE V1IJ G7wE gpOag9 zb J iGe T6H Emc Ma QpDf yDxh eTNjwf wM x 2Ci pkQ eUj RU VhCf NMo5 DZ4h2a dE j ZTk Ox9 46E eU IZv7 rFL6 dj2dwg Rx g bOb qJs Yms Dq QAss n9g2 kCb1Ms gK f x0Y jK0 Glr XO 7xI5 WmQH ozMPfC XT m Dk2 Tl0 oRr nZ vAsF r7wY EJHCd1 xz C vMm jeR 4ct k7 cSEQ81-1}   \eta_k= \inf \left\{t>0: \sup_{0\leq r\leq t}\Vert u^{(n_k) }(r,\cdot)\Vert_p   +\left(   \DFGDSFSDFGHSFGH_0^{t}    \Vert u^{(n_k) }(r,\cdot)\Vert_{3p}^p\,dr   \right)^{1/p}   >\frac{M}{2}+2^{-k}  \right\}   \end{equation}  and probability events   \begin{equation*}    \Omega_N= \bigcap_{k=N}^{\infty} \left\{\omega: \sup_{0\leq r\leq \eta_k\wedge \eta_{k+1}\wedge S}\Vert w^{(n_k, n_{k+1})}(r,\omega)\Vert_p    +\left(\DFGDSFSDFGHSFGH_0^{\eta_k\wedge \eta_{k+1}\wedge S}     \Vert w^{(n_k, n_{k+1})}(r,\omega)\Vert_{3p}^{p}dr\right)^{\frac{1}{p}} <2^{-k-2} \right\}    .   \end{equation*}  Note that $\eta_k\leq \tau_{n_k}^M$ (cf.~\eqref{FGHVBNFJHERTWERDFSDFGSDFGSDFGSDFGDFGWERTWERT64}). Then, by Chebyshev's inequality,   \begin{align*}   \begin{split}   &\PP \left(   \sup_{0\leq r\leq \eta_k\wedge \eta_{k+1}\wedge S}   \Vert w^{(n_k, n_{k+1})}(r,\omega)\Vert_p   +\left(   \DFGDSFSDFGHSFGH_0^{\eta_k\wedge \eta_{k+1}\wedge S}    \Vert w^{(n_k, n_{k+1})}(r,\omega)\Vert_{3p}^{p}   \,dr   \right)^{\frac{1}{p}} \geq 2^{-k-2}    \right)\\   &\indeq \leq    \PP \left(   \sup_{0\leq r\leq \eta_k\wedge \eta_{k+1}\wedge S}   \Vert w^{(n_k, n_{k+1})}(r,\omega)\Vert_p^p   \geq 2^{-kp-3p}    \right)\\   &\indeq\indeq\indeq   +\PP\left(   \DFGDSFSDFGHSFGH_0^{\eta_k\wedge \eta_{k+1}\wedge S}    \Vert w^{(n_k, n_{k+1})}(r,\omega)\Vert_{3p}^{p}   \,dr   \geq 2^{-kp-3p}    \right)\\   &\indeq\leq 2^{kp+3p+1}4^{-kp}=2^{-kp+3p+1}.   \end{split}   \end{align*}   Next, by the Borel-Cantelli lemma,    \begin{equation*}   \PP \left(   \bigcap_{N=1}^{\infty}\bigcup_{k=N}^{\infty}\left\{ \sup_{0\leq r\leq \eta_k\wedge \eta_{k+1}\wedge S}\Vert w^{(n_k, n_{k+1})}(r,\omega)\Vert_p   +\left(\DFGDSFSDFGHSFGH_0^{\eta_k\wedge \eta_{k+1}\wedge S}    \Vert w^{(n_k, n_{k+1})}(r,\omega)\Vert_{3p}^{p}dr\right)^{\frac{1}{p}} \geq 2^{-k-2}  \right\}   \right)=0.   \end{equation*} This shows that $\PP(\bigcup_{N=1}^{\infty}\Omega_N)=1$. Note that $\eta_k\geq \eta_{k+1}\wedge S$ in $\Omega_N$ if $N\leq k$, because  \begin{equation*} \Omega_N\cap \{\eta_k< \eta_{k+1}\wedge S\}\subseteq \left\{\omega: \sup_{0\leq r\leq \eta_{k}}\Vert w^{(n_k, n_{k+1})}(r,\omega)\Vert_p +\left(\DFGDSFSDFGHSFGH_0^{\eta_{k}}  \Vert w^{(n_k, n_{k+1})}(r,\omega)\Vert_{3p}^{p}dr\right)^{\frac{1}{p}} <2^{-k-2} \right\}, \end{equation*}  and almost surely in $\Omega_N\cap \{\eta_k< \eta_{k+1}\wedge S\}$,  \begin{align*} \begin{split}
&\sup_{0\leq r\leq \eta_{k}} \Vert u^{(n_{k+1}) }(r,\cdot)\Vert_p +\left( \DFGDSFSDFGHSFGH_0^{\eta_{k}}  \Vert u^{(n_{k+1}) }(r,\cdot)\Vert_{3p}^p\,dr \right)^{1/p}\\ &\indeq\indeq \geq  \sup_{0\leq r\leq \eta_{k}} \Vert u^{(n_k) }(r,\cdot)\Vert_p -\sup_{0\leq r\leq \eta_{k}}\Vert w^{(n_k, n_{k+1})}(r,\cdot)\Vert_p \\ &\indeq \indeq\indeq +\left( \DFGDSFSDFGHSFGH_0^{\eta_{k}}  \Vert u^{(n_k) }(r,\cdot)\Vert_{3p}^p\,dr \right)^{1/p} -\left(\DFGDSFSDFGHSFGH_0^{\eta_{k}}  \Vert w^{(n_k, n_{k+1})}(r,\cdot)\Vert_{3p}^{p}dr\right)^{\frac{1}{p}} \\ &\indeq\indeq \geq  \frac{M}{2}+2^{-k} -2^{-k-2} > \frac{M}{2}+2^{-k-1}, \end{split} \end{align*} which contradicts $\eta_k< \eta_{k+1}\wedge S$. Then, $\Omega_N\cap \{\eta_k< \eta_{k+1}\wedge S\}=\emptyset$, and $\{\eta_k(\omega)\wedge S\}$ is a non-increasing sequence in $\Omega_N$. Also note that $\Omega_N$ monotonically expands to the whole probability space, and then $\tau_M=\lim_{k\to\infty}\eta_k\wedge S$ is well-defined almost everywhere in $\Omega$. Furthermore,   \begin{align*}   \begin{split}   &\PP(\tau_M<\epsilon)=\PP\left(\bigcup_{l=1}^{\infty}\bigcap_{k=l}^{\infty}\{\eta_k\wedge S <\epsilon\}\right)    = \sup_l    \PP\left(\bigcap_{k=l}^{\infty}\{\eta_k\wedge S <\epsilon\}\cap \Omega_l\right)    \\ &\indeq    \leq \sup_l    \PP\big(\eta_l\wedge S <\epsilon\big)    \leq \sup_l     \PP\left(    \sup_{0\leq r\leq \epsilon}\Vert u^{(n_l) }(r,\cdot)\Vert_p   +\left(\DFGDSFSDFGHSFGH_0^{\epsilon}    \Vert u^{(n_l)}(r,\cdot)\Vert_{3p}^{p} dr\right)^{1/p}   >\frac{M}{2}+2^{-l}\right)   \end{split}    \label{FGHVBNFJHERTWERDFSDFGSDFGSDFGSDFGDFGWERTWERT82}   \end{align*}
if $\epsilon<S$, which by \eqref{ee} yields   \begin{equation*}    \PP(\tau_M=0)=\PP\left(\bigcap_{m=1}^{\infty}\{\tau_M<1/m\}\right)\leq\lim_{m\to\infty} \PP(\tau_M<1/m)=0.   \label{FGHVBNFJHERTWERDFSDFGSDFGSDFGSDFGDFGWERTWERT83}   \end{equation*} \par   It remains to show that $\{u^{(n_k)}\}$ has an $\omega$-pointwise limit and the limit belongs to $L^p(\Omega, C([0,\tau_M], L^p))\cap L^p(\Omega, L^{p}([0,\tau_M], L^{3p}))$. Indeed, by \eqref{FGHVBNFJHERTWERDFSDFGSDFGSDFGSDFGDFGWERTWERT81} and \cite[Lemma~5.2]{KXZ},  \begin{equation*} u^{(n_k)}\indic_{\Omega_N} \xrightarrow{k \to \infty} u_N \mbox{ in }C([0,\tau_M], L^p)\cap L^{p}([0,\tau_M], L^{3p}) \comma \PP\text{-a.e.} \end{equation*} for some adapted process $u_{N}$ and for all $N\in \NNp$. Then, $\{u^{(n_k)}\}$ converges in the same manner to $u=\lim_{N\to\infty} u_{N}$ on $\Omega$. Using \eqref{FGHVBNFJHERTWERDFSDFGSDFGSDFGSDFGDFGWERTWERT65}, we obtain that $\{u^{(n_k)}\indic_{\Omega_N}\}$ are uniformly bounded in $L^p(\Omega, L^{\infty}([0,\tau_M], L^p))\cap L^p(\Omega, L^{p}([0,\tau_M], L^{3p}))$. Thus, the same conclusion holds for $u_{N}$ and $u$, which completes the proof. \end{proof} \par Next, we prove the pathwise uniqueness of local strong solutions.  \par \cole \begin{Lemma}  \label{L06}  Let $p>3$.    Assume that $\nabla\cdot \uu_0=0$, $\DFGDSFSDFGHSFGH_{\TT^3} u_0=0$, and $u_0\in L^p(\Omega, L^p)$. Then for any pair of local strong solutions $(v^{(1)}, \tau)$ and $(v^{(2)},\tau)$ of \eqref{FGHVBNFJHERTWERDFSDFGSDFGSDFGSDFGDFGWERTWERT01}--\eqref{FGHVBNFJHERTWERDFSDFGSDFGSDFGSDFGDFGWERTWERT03} that satisfy  \begin{equation}\label{eng}  \EE\left[\sup_{0\leq s\leq \tau}\Vert v(s)\Vert_p^p  +\DFGDSFSDFGHSFGH_0^{\tau} \sum_j\DFGDSFSDFGHSFGH_{\TT^3}\vert \nabla (|v_j(s,x)| ^{p/2})\vert^2 \,dx\,ds\right]\leq C\EE[\Vert u_0\Vert^p_p+1],  \end{equation} we have $  \PP(v^{(1)}(t)=v^{(2)}(t), ~\forall t\in [0,\tau])=1 $. \end{Lemma}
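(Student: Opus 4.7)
The plan is to write the SPDE satisfied by the difference $w = v^{(1)} - v^{(2)}$ as a linear equation with random coefficients, apply Lemma~\ref{T01} on an interval localized by a stopping time, and close the estimate by a Grönwall-type argument.

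First, since both $v^{(i)}$ are divergence-free, I would note that $w$ satisfies
\begin{equation*}
dw - \Delta w\,dt = -\mathcal{P}\nabla\cdot\bigl(v^{(1)}\otimes w + w\otimes v^{(2)}\bigr)\,dt + \bigl(\sigma(v^{(1)}) - \sigma(v^{(2)})\bigr)\,d\WW(t),\quad w(0)=0,
\end{equation*}
a linear SPDE with drift $f = -\mathcal{P}\nabla\cdot(v^{(1)}\otimes w + w\otimes v^{(2)})$ and diffusion $g = \sigma(v^{(1)}) - \sigma(v^{(2)})$. Combining the assumed energy bound \eqref{eng} with the Poincar\'{e}--Sobolev inequality \eqref{GN} yields $\int_0^\tau (\|v^{(1)}\|_{3p}^p + \|v^{(2)}\|_{3p}^p)\,ds < \infty$ almost surely, so the stopping time
\[
\tau^R = \inf\Bigl\{t > 0 : \sup_{s\in[0,t]}(\|v^{(1)}(s)\|_p + \|v^{(2)}(s)\|_p) + \int_0^t(\|v^{(1)}\|_{3p}^p + \|v^{(2)}\|_{3p}^p)\,ds \geq R\Bigr\}\wedge\tau
\]
satisfies $\tau^R \uparrow \tau$ as $R\to\infty$ almost surely.

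Next, I would choose $q = 3p/4$, which for $p > 3$ satisfies the range $3p/(p+1) < q \leq p$ required by Lemma~\ref{T01}. H\"{o}lder's inequality with $1/q = 1/(3p) + 1/p$ gives $\|v^{(i)}\otimes w\|_q \leq \|v^{(i)}\|_{3p}\|w\|_p$, hence $\|f\|_{-1,q} \leq C(\|v^{(1)}\|_{3p} + \|v^{(2)}\|_{3p})\|w\|_p$, while the Lipschitz hypothesis \eqref{FGHVBNFJHERTWERDFSDFGSDFGSDFGSDFGDFGWERTWERT25} yields $\|g\|_{\mathbb{L}^p} \leq C\|w\|_p$. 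Applying Lemma~\ref{T01} on the random interval $[0,\tau^R\wedge T]$ (as permitted by the remark following Lemma~\ref{L02}) with vanishing initial data, I will obtain
\[
\phi(T) := \EE\Bigl[\sup_{0\leq s\leq\tau^R\wedge T}\|w(s)\|_p^p\Bigr] \leq C_T\,\EE\Bigl[\int_0^{\tau^R\wedge T}\|w\|_p^p\bigl(1 + \|v^{(1)}\|_{3p}^p + \|v^{(2)}\|_{3p}^p\bigr)\,ds\Bigr].
\]
Bounding $\|w(s)\|_p^p$ by $\sup_{[0,\tau^R\wedge T]}\|w\|_p^p$ and pulling this factor outside the integral, the remaining weight is bounded pathwise by $R+T$ on $[0,\tau^R]$, so $\phi(T) \leq C_T(R+T)\phi(T)$. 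Choosing $T_0$ small enough that $C_{T_0}(R+T_0) < 1$ forces $\phi(T_0) = 0$ and hence $w \equiv 0$ on $[0,T_0\wedge\tau^R]$ almost surely. Iterating this short-time argument on successive intervals $[kT_0,(k+1)T_0]$, each starting with vanishing data, extends the conclusion to $[0,\tau^R\wedge T]$ for every $T>0$; sending $R\to\infty$ then yields $w\equiv 0$ on $[0,\tau]$ almost surely, which by continuity of $v^{(1)}$ and $v^{(2)}$ in $L^p$ is the desired pathwise uniqueness.

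The hard part is arranging the H\"{o}lder exponents so that $q$ lies in the admissible range $3p/(p+1) < q \leq p$ of Lemma~\ref{T01} while still extracting the factor $\|v^{(i)}\|_{3p}$, which is the quantity controlled in $L^p([0,\tau])$ by \eqref{eng} and \eqref{GN}. The identity $1/q = 1/(3p) + 1/p$ forces $q = 3p/4$, and this choice is admissible precisely when $p > 3$, reflecting the same sharp threshold that governs the existence proof. Pulling the supremum of $\|w\|_p^p$ outside the expectation in the Grönwall step relies crucially on the pathwise $L^1$-in-time bound on the weight $\|v^{(i)}\|_{3p}^p$ furnished by the stopping time $\tau^R$; without this localization the weight is only integrable in expectation, which is insufficient to close the estimate.
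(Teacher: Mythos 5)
Your overall architecture --- write the equation for $w=v^{(1)}-v^{(2)}$, localize by a stopping time built from the energy bound \eqref{eng}, estimate the drift in $W^{-1,q}$, and close with Lemma~\ref{T01} plus a Gr\"onwall-type step --- matches the paper's, and your exponent bookkeeping ($q=3p/4$ lies in the admissible range of Lemma~\ref{T01} exactly when $p>3$) is correct. The gap is in the closure. Your H\"older split $\Vert v^{(i)}\otimes w\Vert_q\le \Vert v^{(i)}\Vert_{3p}\Vert w\Vert_p$ puts the entire high-integrability burden on the coefficients $v^{(i)}$, so the Gr\"onwall weight is the random function $1+\Vert v^{(1)}\Vert_{3p}^p+\Vert v^{(2)}\Vert_{3p}^p$, whose time integral up to $\tau^R\wedge T$ is controlled only by $R+T$ --- a quantity that does \emph{not} shrink as $T\to0$, since the $R$ comes from the definition of $\tau^R$ and not from the shortness of the interval. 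After pulling $\sup\Vert w\Vert_p^p$ out of the integral you therefore get $\phi(T)\le C_T(R+T)\phi(T)$ and need $C_{T_0}(R+T_0)<1$. This is unattainable: the single constant in the estimate of Lemma~\ref{T01} also multiplies the initial-data term and hence cannot be smaller than $1$ (and nothing in the statement gives a separately vanishing prefactor on the forcing term as $T\to0$), while $R$ must be taken large for $\tau^R$ to be a nontrivial exhaustion of $\tau$ (for small $R$ one may even have $\tau^R=0$). So the product $C_{T_0}R$ cannot be made less than $1$, and the contraction does not close; handling a genuinely random, merely $L^1$-in-time weight under an $\EE[\sup\cdot]$ would require a stochastic Gr\"onwall lemma that you neither invoke nor prove.

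The paper closes the estimate by distributing the exponents the other way: writing $\Vert v^{(i)}_i w_j\Vert_q\le\Vert v^{(i)}\Vert_p\Vert w\Vert_l$ with $1/q=1/p+1/l$ and $p<l<3p$, bounding $\Vert v^{(i)}\Vert_p$ deterministically by $M^{1/p}$ via a stopping time that caps $\sup_s\Vert v^{(i)}(s)\Vert_p^p$ plus the dissipation integral, and then interpolating $\Vert w\Vert_l^p\le C_\varepsilon\Vert w\Vert_p^p+\varepsilon\Vert w\Vert_{3p}^p$. The critical piece $\varepsilon\Vert w\Vert_{3p}^p\le\varepsilon C_0\Vert\nabla(|w|^{p/2})\Vert_2^2$ (by \eqref{GN}) is absorbed into the dissipation term on the left-hand side of the maximal estimate, leaving a \emph{deterministic} Gr\"onwall weight $C_{M,\varepsilon}$ in front of $\Vert w\Vert_p^p$, to which the ordinary integral Gr\"onwall inequality applies on $[0,\eta^M\wedge S]$; one then lets $M,S\to\infty$. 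Your localization and the final limiting step are fine; it is the drift estimate that must be reworked with this alternative split for the proof to close.
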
 \par \colb \begin{proof}[Proof of Lemma~\ref{L06}]   Let $M>0$ and introduce stopping times \begin{align} \begin{split} \eta_i^M=\inf\left\{ t>0:  \sup_{0\leq s\leq t}\Vert v(s)\Vert_p^p +\DFGDSFSDFGHSFGH_0^{t} \sum_j\DFGDSFSDFGHSFGH_{\TT^3}\vert \nabla (|v_j(s,x)| ^{p/2})\vert^2 \,dx\,ds \geq M \right\} . \end{split} \llabel{2f ncvf aN6AO2 nI h 6nk VkN 8tT 8a Jdb7 08jZ ZqvL1Z uT 5 lSW Go0 8cL J1 q3Tm AZF8 qhxaoY JC 6 FWR uXH Mx3 Dc w8uJ 87Q4 kXVac6 OO P DZ4 vRt sP0 1h KUkd aCLB iPSAtL u9 W Loy xMa Bvi xH yadn qQSJ WgSCkF 7l H aO2 yGR IlK 3a FZen CWqO 9EyRof Yb k idH Qh1 G2v oh cMPo EUzp 6f14Ni oa r vW8 OUc 426 Ar sSo7 HiBU KdVs7c Oj a V9K EUt Kne 4V IPuZ c4bP RFB9AB fq c lU2 ct6 PDQ ud t4VO zMMU NrnzJX px k E2N B8p fJi M4 UNg4 Oi1g chfOU6 2a v Nrp cc8 IJm 2W nVXL D672 ltZTf8 RD w qTv BXE WuH 2c JtO1 INQU lOmEPv j3 O OvQ SHx iKc 8R vNnJ NNCC 3KXp3J 8w 5 0Ws OTX HHh vL 5kBp Kr5u rqvVFv 8u p qgP RPQ bjC xm e33u JUFh YHBhYM Od 0 1Jt 7yS fVp F0 z6nC K8gr RahMJ6 XH o LGu 4v2 o9Q xO NVY8 8aum 7cZHRN XH p G1a 8KY XMa yT xXIk O5vV 5PSkCp 8P B oBv 9dB mep ms 7DDU aicX Y8Lx8I Bj F Btk e2y ShN GE 7a0o EMFy AUUFkR WW h eDb HhA M6U h3 73Lz TTTx xm6ybD Bs I IIA PHh i83 7r a970 Fam4 O7afXU Gr f 0vW e52 e8E Py BFZ0 wxBz ptJf8L iZ k dTZ SSP pSz rb GEpx b4KX LHLg1V Pa f 7ys vYs FJb 8r DpAM Knzq Dg7g2H wC r uQN DBz Z5S NM ayKB 6RIe PFIHFQ aw r RHA x38 CHh oB GVIR vxSM Yf0g8h ac i bKG 3Cu Sl5 jT Kl42 o6gA OYYHUB 2S V O3R c4EQ26} \end{align} If $\Vert u_0(\omega)\Vert_p< M$, then $\eta_i^M(\omega)>0$, otherwise $\eta_i^M(\omega)=0$ for $i=1,2$. Define $\eta^M=\eta_1^M\wedge \eta_2^M\wedge \tau$. Due to \eqref{eng}, $\lim_{M\to\infty}\PP(\eta^M= \tau)=1$. Let $S>0$ and denote $w=v^{(1)}-v^{(2)}$. On $[0,\eta^M\wedge S]$, $w$ satisfies \begin{align*} \begin{split} dw -\Delta w\,dt  &= -\mathcal{P}\bigl(( w\cdot \nabla)v^{(2)}\bigr)-\mathcal{P}\bigl(( v^{(1)}\cdot \nabla)w\bigr)\,dt +\left( \sigma(v^{(1)})-\sigma(v^{(2)}) \right)\,d\WW_t,  \\ \nabla\cdot w &= 0, \\ w( 0)&= 0\quad{}\mbox{a.s.} \end{split} \end{align*} The It\^{o} formula yields, as in \eqref{FGHVBNFJHERTWERDFSDFGSDFGSDFGSDFGDFGWERTWERT68} and \eqref{FGHVBNFJHERTWERDFSDFGSDFGSDFGSDFGDFGWERTWERT78},  \begin{align*} \begin{split} \EE\left[ \sup_{0\leq s\leq \eta^M\wedge S}\Vert w(s)\Vert_p^p +\DFGDSFSDFGHSFGH_0^{\eta^M\wedge S} \sum_j\DFGDSFSDFGHSFGH_{\TT^3}\vert \nabla (|w_j(s,x)| ^{p/2})\vert^2 \,dx\,ds \right]  \leq  \EE\left[ \DFGDSFSDFGHSFGH_0^{\eta^{M}\wedge S} (\varepsilon\Vert  w  \Vert_{3p}^p+C_{M,\varepsilon} \Vert  w  \Vert_{p}^p ) \,dt \right] , \end{split} \end{align*} which can be further simplified to  \begin{align*} \begin{split} \EE\left[ \sup_{0\leq s\leq \eta^M\wedge S}\Vert w(s)\Vert_p^p \right] \leq  C_{M}\DFGDSFSDFGHSFGH_0^{S} \EE\left[ \sup_{0\leq s\leq \eta^M\wedge t} \Vert  w  \Vert_{p}^p \right] \,dt. \end{split} \end{align*} By Gr\"{o}nwall's lemma, we conclude that $w\equiv 0$ a.s.~on $[0, \eta^M\wedge S]$. Since $S$ is an arbitrary and positive constant independent of $M$, by sending both $M$ and $S$ to the positive infinity, we obtain the pathwise uniqueness on $[0,\tau]$.  \end{proof} \par \begin{proof}[Proof of Theorem~\ref{T04}]  Using the notation in the proof of Lemma~\ref{L05}, we first impose $\Vert u_0\Vert_p\leq K$~a.s.\ and show that $u$ is a strong solution to \eqref{FGHVBNFJHERTWERDFSDFGSDFGSDFGSDFGDFGWERTWERT01}--\eqref{FGHVBNFJHERTWERDFSDFGSDFGSDFGSDFGDFGWERTWERT03} on $[0,\tau_M]$. Denote $\zeta_k=\inf_{l\geq k} \eta_l\wedge S$. By right-continuity of the filtration $\mathcal{F}_t$, $\{\zeta_k\}_{k\in \NNp}$ are also stopping times. Note that $\zeta_k\leq \zeta_l$ if $k< l$ and $\zeta_l\leq \eta_l\wedge S$.  \par  Since each $\{(u^{(n_l)}, \eta_l)\}$ is a local solution to an approximating equation, then componentwise,   \begin{align}   \begin{split}    &\big(\indic_{[0,\zeta_m]}(s)u^{(n_l)} ( s),\phi\big)    = \DFGDSFSDFGHSFGH_0^s \indic_{[0,\zeta_m]}(r)    \big(u^{(n_l)} ,\Delta\phi\big)    \,dr+    \sum_{j}\DFGDSFSDFGHSFGH_0^s \indic_{[0,\zeta_m]}(r)\bigl(\mathcal{S}_{n_l}\mathcal{P}\bigl( u^{(n_l)}_j u^{(n_l)} \bigr),\DFGDSFSDFGHSFGI_{j}\phi\bigr)\,dr    \\&\indeq\indeq    +\DFGDSFSDFGHSFGH_0^s \indic_{[0,\zeta_m]}(r)    \big(\mathcal{S}_{n_l}\sigma(u^{(n_l)} ),\phi\big)    \,d\WW_r    +(\mathcal{S}_{n_l}\uu_0,\phi)    \comma  (s,\omega)\text{-a.e.}    ,   \end{split}   \label{FGHVBNFJHERTWERDFSDFGSDFGSDFGSDFGDFGWERTWERT58}   \end{align} for all $\phi\in C^{\infty}(\TT^3)$, $s\in [0, S]$, and $m\leq l$. Also, \eqref{FGHVBNFJHERTWERDFSDFGSDFGSDFGSDFGDFGWERTWERT81} holds on $[0,\zeta_m]$ for $\{u_l\}_{l\geq m}$. Utilizing Lemma~\ref{L05} and the boundedness of $\{u^{(n_k)}\}$ in $L^p_{\omega}L^{\infty}_t L_x^p$ on $[0,\zeta_m]$, we may pass to the limit in \eqref{FGHVBNFJHERTWERDFSDFGSDFGSDFGSDFGDFGWERTWERT58} and obtain   \begin{align}   \begin{split}
   &\DFGDSFSDFGHSFGH_0^s \indic_{[0,\zeta_m]}(r)\big(u^{(n_l)} ,\Delta\phi\big)\,dr    +    \sum_{j}\DFGDSFSDFGHSFGH_0^s \indic_{[0,\zeta_m]}(r)\big(\mathcal{S}_{n_l}\mathcal{P}( u^{(n_l)}_j u^{(n_l)} ),\DFGDSFSDFGHSFGI_{j}\phi\big)\,dr    \\&\indeq     \to\DFGDSFSDFGHSFGH_0^s\indic_{[0,\zeta_m]}(r) \bigl((\uu,\Delta\phi)+(\mathcal{P}(u_j\uu),\DFGDSFSDFGHSFGI_{j}\phi)\bigr)\,dr   ,   \end{split}    \label{FGHVBNFJHERTWERDFSDFGSDFGSDFGSDFGDFGWERTWERT59}   \end{align} for a.e.~$(s,\omega)$ as $l\to \infty$. Also, by the BDG inequality,    \begin{align*}   \begin{split}    &\EE\left[\sup_{s\in[0,S]}\left|\DFGDSFSDFGHSFGH_0^s     \indic_{[0,\zeta_m]}(r)    (\mathcal{S}_{n_l}\sigma(u^{(n_l)} )-\sigma(\uu),\phi)\,d\WW_r\right|\right]     \\ \leq &    C\EE\left[    \left(    \DFGDSFSDFGHSFGH_0^{\zeta_m}     \bigl\Vert    \bigl(    \mathcal{S}_{n_l}(\sigma(u^{(n_l)} )-\sigma(\uu)),\phi    \bigr)    \bigr\Vert_{ l^2}^2\, dr    \right)^{1/2}\right]    +    C\EE\left[\left(    \DFGDSFSDFGHSFGH_0^{\zeta_m} \bigl\Vert\bigl(    (\mathcal{S}_{n_l}-I )\sigma(\uu),\phi    \bigr)\bigr\Vert_{ l^2}^2\, dr    \right)^{1/2}\right],   \end{split}   \end{align*}   where     \begin{align*}   \begin{split}   &\EE\left[   \left(   \DFGDSFSDFGHSFGH_0^{\zeta_m}    \bigl\Vert   \bigl(   \mathcal{S}_{n_l}(\sigma(u^{(n_l)} )-\sigma(\uu)),\phi   \bigr)   \bigr\Vert_{ l^2}^2\, dr   \right)^{1/2}\right]       \\ &\indeq\leq  C   \EE\left[   \left(   \DFGDSFSDFGHSFGH_0^{\zeta_m} \left(   \DFGDSFSDFGHSFGH_{\TT^3}   \Vert   \mathcal{S}_{n_l}(\sigma(u^{(n_l)} )-\sigma(\uu))   \Vert_{ l^2}^2\,dx   \right)   \Vert \phi\Vert_2^2    \, dr\right)^{1/2}   \right]   \\ &\indeq\leq  C   \Vert \phi\Vert_2   \EE\left[   \left(   \DFGDSFSDFGHSFGH_0^{\zeta_m} \left(   \DFGDSFSDFGHSFGH_{\TT^3}   \Vert   \mathcal{S}_{n_l}(\sigma(u^{(n_l)} )-\sigma(\uu))   \Vert_{ l^2}^p\,dx   \right)^{2/p}   \, dr\right)^{1/2}   \right]   \\&\indeq    \leq C \Vert \phi\Vert_2 \EE\left[\DFGDSFSDFGHSFGH_0^{\zeta_m} \Vert   \mathcal{S}_{n_l}(\sigma(u^{(n_l)} )-\sigma(\uu))  \Vert_{ \mathbb{L}^p}^p\, dr\right]   \end{split}   \end{align*} and   \begin{align*} \begin{split} &\EE\left[\left( \DFGDSFSDFGHSFGH_0^{\zeta_m} \bigl\Vert\bigl( (\mathcal{S}_{n_l}-I )\sigma(\uu),\phi \bigr)\bigr\Vert_{ l^2}^2\, dr \right)^{1/2}\right] \leq C \Vert \phi\Vert_2  \EE\left[\DFGDSFSDFGHSFGH_0^{\zeta_m} \Vert (\mathcal{S}_{n_l}-I )\sigma(\uu) \Vert_{ \mathbb{L}^p}^p\, dr\right]. \end{split} \end{align*}   Then by Lemma~\ref{L01} and the assumptions on $\sigma$,   \begin{align*} \begin{split} &\EE\left[\sup_{s\in[0,S]}\left|\DFGDSFSDFGHSFGH_0^s  \indic_{[0,\zeta_m]}(r) (\mathcal{S}_{n_l}\sigma(u^{(n_l)} )-\sigma(\uu),\phi)\,d\WW_r\right|\right] \\&\indeq \leq CS\Vert \phi\Vert_2 \EE\left[\sup_{s\in[0,\zeta_m]} \Vert u^{(n_l)} -u\Vert_{ p}^p\right] +  C \Vert \phi\Vert_2 \EE\left[\DFGDSFSDFGHSFGH_0^{\zeta_m} \Vert (\mathcal{S}_{n_l}-I )\sigma(\uu) \Vert_{ \mathbb{L}^p}^p\, dr\right]. \end{split} \end{align*}   The right-hand side approaches to zero as $l\to \infty$. Hence, we may infer the existence of a further subsequence, which for simplicity we still denote by $\{n_l\}$, such that    \begin{equation}    \DFGDSFSDFGHSFGH_0^s \indic_{[0,\zeta_m]}(r)(\mathcal{S}_{n_l}\sigma(u^{(n_l)}),\phi)\,d\WW_r         \xrightarrow{l \to \infty}        \DFGDSFSDFGHSFGH_0^s \indic_{[0,\zeta_m]}(r)(\sigma(\uu),\phi)\,d\WW_r    \comma (s,\omega)\text{-a.e.}    \label{FGHVBNFJHERTWERDFSDFGSDFGSDFGSDFGDFGWERTWERT61}   \end{equation} Combining \eqref{FGHVBNFJHERTWERDFSDFGSDFGSDFGSDFGDFGWERTWERT59} and \eqref{FGHVBNFJHERTWERDFSDFGSDFGSDFGSDFGDFGWERTWERT61}, we obtain  \begin{align}  \begin{split}  &\indic_{[0,\zeta_m]}(s)\big(u ( s),\phi\big)  = (\uu_0,\phi)  +\indic_{[0,\zeta_m]}(s)\DFGDSFSDFGHSFGH_0^s   \big(u ,\Delta\phi\big)  \,dr+\indic_{[0,\zeta_m]}(s)\DFGDSFSDFGHSFGH_0^s   \big(\sigma(u ),\phi\big)  \,d\WW_r  \\&\indeq\indeq  +\sum_{j}\indic_{[0,\zeta_m]}(s)\DFGDSFSDFGHSFGH_0^s \bigl(\mathcal{P}\bigl( u_j u \bigr),\DFGDSFSDFGHSFGI_{j}\phi\bigr)\,dr  \comma  (s,\omega)\text{-a.e.}  ,  \end{split}  \label{FGHVBNFJHERTWERDFSDFGSDFGSDFGSDFGDFGWERTWERT76}  \end{align}  for all $m\in\NNp$. Sending $m\to \infty$ in \eqref{FGHVBNFJHERTWERDFSDFGSDFGSDFGSDFGDFGWERTWERT76} and noting that $\lim_{m\to \infty}\indic_{[0,\zeta_m]}(t)=\indic_{[0,\tau_M]}(t)$ $\PP$-a.s., we conclude that $\uu$ is indeed a strong solution to \eqref{FGHVBNFJHERTWERDFSDFGSDFGSDFGSDFGDFGWERTWERT01}--\eqref{FGHVBNFJHERTWERDFSDFGSDFGSDFGSDFGDFGWERTWERT03} on $[0, \tau_M]$. Moreover, due to \eqref{FGHVBNFJHERTWERDFSDFGSDFGSDFGSDFGDFGWERTWERT65}, we have \begin{align*} \begin{split} \EE\left[ \indic_{\Omega_N}\sup_{0\leq s\leq \tau_M}\Vert u^{(n_l)}(s,\cdot)\Vert_p^p +\indic_{\Omega_N}\DFGDSFSDFGHSFGH_0^{\tau_M} 
\sum_{j}    \DFGDSFSDFGHSFGH_{\TT^3} | \nabla (|u^{(n_l)}_j(s,x)|^{p/2})|^2 \,dx\,ds \right] \leq  C\EE\bigl[\Vert\uu_0\Vert_p^p +1 \bigr] \end{split} \end{align*} if $N\leq l$. Then we use Lemmas \ref{L02} and \ref{L04}, send $l\to \infty$ in above inequality, and send $N\to \infty$, arriving at \begin{align} \begin{split} \EE\left[ \sup_{0\leq s\leq \tau_M}\Vert u(s,\cdot)\Vert_p^p +\DFGDSFSDFGHSFGH_0^{\tau_M}  \sum_{j}    \DFGDSFSDFGHSFGH_{\TT^3} | \nabla (|u_j(s,x)|^{p/2})|^2 \,dx\,ds \right] \leq  C\EE\bigl[\Vert\uu_0\Vert_p^p +1 \bigr].\label{FGHVBNFJHERTWERDFSDFGSDFGSDFGSDFGDFGWERTWERT84} \end{split} \end{align} \par To remove the condition $\Vert u_0\Vert_p\leq K$~a.s.,  we denote the local strong solution corresponding to the initial data $u_0\indic_{k\leq \Vert u_0\Vert_p< k+1}$ by $(u_{(k)},\tau_k)$, i.e., \begin{align} \begin{split} &\indic_{[0,\tau_k]}(s)\big(u_{(k)} ( s),\phi\big) = (\uu_0\indic_{k\leq \Vert u_0\Vert_p< k+1},\phi) +\indic_{[0,\tau_k]}(s)\DFGDSFSDFGHSFGH_0^s  \big(u_{(k)} ,\Delta\phi\big) \,dr+\indic_{[0,\tau_k]}(s)\DFGDSFSDFGHSFGH_0^s  \big(\sigma(u_{(k)} ),\phi\big) \,d\WW_r \\&\indeq\indeq +\sum_{j}\indic_{[0,\tau_k]}(s)\DFGDSFSDFGHSFGH_0^s \bigl(\mathcal{P}\bigl( (u_{(k)})_j u_{(k)} \bigr),\DFGDSFSDFGHSFGI_{j}\phi\bigr)\,dr \comma  (s,\omega)\text{-a.e.} , \end{split}\label{FGHVBNFJHERTWERDFSDFGSDFGSDFGSDFGDFGWERTWERT90} \end{align}  for all $k\in \NNp$. Define  \[ u=\sum_{k=0}^{\infty} u_{(k)} \indic_{k\leq \Vert u_0\Vert_p< k+1}\comma \tau=\sum_{k=0}^{\infty} \tau_k \indic_{k\leq \Vert u_0\Vert_p< k+1}. \] Since $\PP(\tau_k>0)=1$ for all $\tau_k$, we have \[ \PP(\tau>0)=\sum_{k=0}^{\infty}\PP(\tau_k>0| k\leq \Vert u_0\Vert_p< k+1)\PP(k\leq \Vert u_0\Vert_p< k+1)=\sum_{k=0}^{\infty}\PP(k\leq \Vert u_0\Vert_p< k+1)=1. \] Next, note $\indic_{[0,\tau_k]}\indic_{k\leq \Vert u_0\Vert_p< k+1}=\indic_{[0,\tau]}\indic_{k\leq \Vert u_0\Vert_p< k+1}$. Also,  $\sum_{k=0}^{\infty}\sigma(u_{(k)} )\indic_{k\leq \Vert u_0\Vert_p< k+1}$ agrees with $\sigma(u )$ in $\mathbb{L}^p$. Multiplying both sides of \eqref{FGHVBNFJHERTWERDFSDFGSDFGSDFGSDFGDFGWERTWERT90} by $\indic_{k\leq \Vert u_0\Vert_p< k+1}$ and summing over $k$, we obtain  \begin{align*} \begin{split} &\indic_{[0,\tau]}(s)\big(u ( s),\phi\big) = (\uu_0,\phi) +\indic_{[0,\tau]}(s)\DFGDSFSDFGHSFGH_0^s  \big(u ,\Delta\phi\big) \,dr+\indic_{[0,\tau]}(s)\DFGDSFSDFGHSFGH_0^s  \big(\sigma(u ),\phi\big) \,d\WW_r \\&\indeq\indeq +\sum_{j}\indic_{[0,\tau]}(s)\DFGDSFSDFGHSFGH_0^s \bigl(\mathcal{P}\bigl( u_j u\bigr),\DFGDSFSDFGHSFGI_{j}\phi\bigr)\,dr \comma  (s,\omega)\text{-a.e.}, \end{split} \end{align*}  namely, $(u,\tau)$ is a local solution associated with a general initial data $u_0\in L^p(\Omega, L^p(\TT^3))$.  \par Since $u_{(k)}\in C([0,\tau_k], L^p)$ almost surely, we have $u\in C([0,\tau], L^p)$ almost surely. In addition, using \eqref{FGHVBNFJHERTWERDFSDFGSDFGSDFGSDFGDFGWERTWERT84} and the pathwise uniqueness, we obtain \begin{align} \begin{split} &\EE\left[\sup_{0\leq s\leq \tau}\Vert\uu(s,\cdot)\Vert_p^p +\sum_{j}\DFGDSFSDFGHSFGH_0^{\tau} \DFGDSFSDFGHSFGH_{\TT^3} | \nabla (|\uu_j(s,x)|^{p/2})|^2 \,dx\,ds \right] \\&\indeq =\lim_{k\to\infty} \EE\left[\indic_{0\leq \Vert u_0\Vert_p< k+1}\left( \sup_{0\leq s\leq \tau}\Vert\uu(s,\cdot)\Vert_p^p +\sum_{j}\DFGDSFSDFGHSFGH_0^{\tau} \DFGDSFSDFGHSFGH_{\TT^3} | \nabla (|\uu_j(s,x)|^{p/2})|^2 \,dx\,ds \right)\right] \\&\indeq \leq \lim_{k\to\infty} C\EE\bigl[\indic_{0\leq \Vert u_0\Vert_p< k+1}\Vert\uu_0\Vert_p^p\bigr]+C\leq  C\EE\bigl[\Vert\uu_0\Vert_p^p\bigr]+C , \end{split} \llabel{w hR8 pw krrA NA4j 7MfcEM al 4 HwK PTg ZaZ 9G 8sev uwIA hkhR8W ga f zJA 0FV NmS Cw UB0Q JDgR jCSVSr sG M bWA Bxv zOM My cNSO ylZz wFiNPc mf Q hwZ Pan Lp0 1E UVHM A2dE 0nLuNV xK x co7 opb QzR aA lowo Vtor qU5eUX tE l 1qh 1IP CPE uV Pxcn TwZv KJTpHZ pq x XOF aGs rQN Pn uqc9 CMD8 mJZoMO Cy 6 XHj WAf EqI 95 Zjgc PdV8 maWwkH lM 3 0Vw DjX lx1 Qf gyLF xe5i wJDnJI rU K L6e CVt h3g X8 RLAC CC2q 5kMYXo 8N s DfA n3O sap ZT 8U3F dMIE DKqMo0 Yv b wCG MR6 6XT yI OtLU uC6c mcOFsv pW T niQ mu0 PeH EF 9Imo lIuT hHWHwh 8J z 4hC 0rK 2Gd Nz LXiE Y7Vu QfRbXp iQ n Pps 9gM A8m Wk yXsY FLoi Rtl2Kl 2p I 9bS nyi 07m UZ qhEs BOCg I4F5AF Fd j X3w f0W u2Y qd dp2Z Ukje FMAxnD ls u t9q zby RgD Wr HldN Zewz EK1cSw WJ Z ywl oSo f6z VD AB6e r0o2 HZY1tr Zh B uL5 zYz rAU dM KXVK GWKI HOqqx1 zj 8 tlp xuU D83 eL Uerj xfHN MZlaqZ Vh T 6Jk u15 FdL vd eo08 7AsG C8WdoM nf 4 dTo Hw4 7hg lT qjKt AlwR 9ufOhL KT D gWZ hxH FX5 gU 5uN2 S6es PlxKpX zB m gyW Uy5 D01 WD 88a4 YmWR fdmev1 dB v HOm hTB qur Ag TC6y rrRB Pn9QfZ 9T 4 mwI h3x jAt ki MlAl Td6f SQ5iQB BY 6 OEr T3g f0D Ke ECnj gcTX AL8grK Bp f cJv q4f pIh WG FSdh 6LOq g0ao9A EQ69} \end{align} concluding  the proof. \end{proof} \par \section*{Acknowledgments} \rm IK was supported in part by the NSF grant DMS-1907992. \par  
\begin{thebibliography}{[ZBL2]} \small \bibitem[AgV]{AgV}   A.~Agresti and M.~Veraar,   \emph{Stochastic Navier-Stokes equations for turbulent flows in critical spaces},   arXiv:2107.03953v1. \bibitem[BR]{BR}    V.~Barbu and M.~R\"{o}ckner, \emph{Global solutions to random 3{D} vorticity equations for small initial data},    J.~Differential Equations   \textbf{263} (2017), no.~9, 5395--5411.  \bibitem[BeT]{BeT}  A.~Bensoussan and R.~Temam, \emph{\'{E}quations stochastiques du type Navier-Stokes},  J.~Functional Analysis \textbf{13} (1973), 195--222.    \bibitem[BT]{BT}  P.~Benner and C.~Trautwein,    \emph{Optimal control problems constrained by the stochastic {N}avier-{S}tokes equations with multiplicative {L}\'{e}vy noise},    Math.\ Nachr.~\textbf{292} (2019), no.~7, 1444--1461. \bibitem[BCF]{BCF}  Z.~Brze\'{z}niak, M.~Capi\'{n}ski, and F.~Flandoli,    \emph{Stochastic {N}avier-{S}tokes equations with multiplicative noise}, Stochastic Anal.\ Appl.~\textbf{10} (1992), no.~5, 523--532.  \bibitem[BF]{BF}  Z.~Brze\'{z}niak and B.~Ferrario,    \emph{A note on stochastic {N}avier-{S}tokes equations with not regular multiplicative noise},    Stoch.\ Partial Differ.\ Equ.\ Anal.\ Comput.~\textbf{5} (2017), no.~1, 53--80. \bibitem[CC]{CC}  M.~Capi\'{n}ski and N.J.~Cutland,    \emph{Navier-{S}tokes equations with multiplicative noise},    Nonlinearity \textbf{6} (1993), no.~1, 71--78. \bibitem[DZ]{DZ}  G.~Da~Prato and J.~Zabczyk,    \emph{Ergodicity for infinite-dimensional systems},   London Mathematical Society Lecture Note Series, vol. 229, Cambridge University Press, Cambridge, 1996.  \bibitem[DZ2]{DZ2} G.~Da~Prato and J.~Zabczyk,  \emph{Stochastic Equations in Infinite Dimensions}, (2nd ed.), Encyclopedia of Mathematics and its Applications, Cambridge University Press. doi:10.1017/CBO9781107295513, 2014.    \bibitem[F]{F}  F.~Flandoli, \emph{An introduction to 3{D} stochastic fluid dynamics},
  S{PDE} in hydrodynamic: recent progress and prospects, Lecture Notes in   Math., vol.~1942, Springer, Berlin, 2008, pp.~51--150.   \bibitem[FG]{FG}   F.~Flandoli and D.~Gatarek \emph{Martingale and stationary solutions for stochastic Navier-Stokes equations},  Probab. Theory Relat. Fields, \textbf{102} (1995), pp.~367--391.   \bibitem[FRS]{FRS}  B.P.W.~Fernando, B.~R\"{u}diger, and S.S.~Sritharan,    \emph{Mild solutions of stochastic {N}avier-{S}tokes equation with jump noise in {$\Bbb L^p$}-spaces},    Math. Nachr. \textbf{288} (2015), no.~14-15, 1615--1621. \bibitem[FS]{FS}  B.P.W.~Fernando and S.S.~Sritharan,    \emph{Nonlinear filtering of stochastic {N}avier-{S}tokes equation with {I}t\^{o}-{L}\'{e}vy noise},    Stoch.\ Anal.\ Appl.~\textbf{31} (2013), no.~3, 381--426.  \bibitem[G]{G} L.~Grafakos,    \emph{Classical and modern Fourier analysis},    Pearson Education, Inc., Upper Saddle River, NJ, 2004.  \bibitem[MeS]{MeS} J.-L.~Menaldi and S.S.~Sritharan,    \emph{Stochastic 2-{D} {N}avier-{S}tokes equation},    Appl.\ Math.\ Optim.\ \textbf{46} (2002), no.~1, 31--53.  \bibitem[MS]{MS} M.T.~Mohan and S.S.~Sritharan,    \emph{{$\Bbb{L}^p$}-solutions of the stochastic {N}avier-{S}tokes equations subject to {L}\'{e}vy noise with {$\Bbb{L}^m(\Bbb{R}^m)$} initial data},    Evol.\ Equ.\ Control Theory~\textbf{6} (2017), no.~3, 409--425.  \bibitem[R]{R}  B.L.~Rozovski\u{\i},    \emph{Stochastic evolution systems},    Mathematics and its Applications (Soviet Series), vol.~35, Kluwer Academic Publishers Group, Dordrecht, 1990, Linear theory and applications to nonlinear filtering,   Translated from the Russian by A.~Yarkho.  \bibitem[GV]{GV}   N.E.~Glatt-Holtz and V.C.~Vicol,    \emph{Local and global existence of smooth solutions for the stochastic {E}uler equations with multiplicative noise},    Ann.\ Probab.~\textbf{42} (2014), no.~1, 80--145.  \bibitem[GZ]{GZ} N.~Glatt-Holtz and M.~Ziane,    \emph{Strong pathwise solutions of the stochastic {N}avier-{S}tokes system},    Adv.\ Differential Equations~\textbf{14} (2009), no.~5-6, 567--600.  \bibitem[Ki]{Ki}   J.U.~Kim, \emph{Strong solutions of the stochastic {N}avier-{S}tokes   equations in {$\Bbb R^3$}}, Indiana Univ.\ Math.~J.~\textbf{59} (2010), no.~4,   1417--1450.  \bibitem[KXZ]{KXZ} I.~Kukavica, F.~Xu, and M.~Ziane,  \emph{Global existence for the stochastic Navier-Stokes equations with small $L^p$ data},  Stoch PDE: Anal Comp (2021), https://doi.org/10.1007/s40072-021-00196-9. \bibitem[KZ]{KZ}  I.~Kukavica and M.~Ziane,  \emph{Regularity of the {N}avier-{S}tokes equation in a thin periodic domain with large data},  Discrete Contin.\ Dyn.\ Syst.~\textbf{16} (2006), no.~1, 67--86.  \bibitem[MR]{MR} R.~Mikulevicius and B.L. Rozovskii,    \emph{Global {$L_2$}-solutions of stochastic {N}avier-{S}tokes equations}, Ann. Probab. \textbf{33} (2005), no.~1, 137--176.  \bibitem[ZBL1]{ZBL1} J.~Zhu, Z.~Brze\'{z}niak, and W.~Liu,    \emph{{$\Bbb L^p$}-solutions for stochastic {N}avier-{S}tokes equations with jump noise},    Statist.\ Probab.\ Lett.~\textbf{155} (2019), 108563, 9pp.  \bibitem[ZBL2]{ZBL2} J.~Zhu, Z.~Brze\'{z}niak, and W.~Liu,    \emph{Maximal inequalities and exponential estimates for stochastic convolutions driven by {L}\'{e}vy-type processes in {B}anach spaces with application to stochastic quasi-geostrophic equations},    SIAM J. Math. Anal. \textbf{51} (2019), no.~3, 2121--2167.  \end{thebibliography}
\end{document}